\newtheorem{theorem}{Theorem}[section]
\newtheorem{corollary}[theorem]{Corollary}
\newtheorem{observation}[theorem]{Observation}
\newtheorem{fact}[theorem]{Fact}
\newtheorem{question}{Question}
\newtheorem{proposition}[theorem]{Proposition}
\newtheorem{lemma}[theorem]{Lemma}
\theoremstyle{definition}
\newtheorem{remark}[theorem]{Remark}
\newtheorem{example}[theorem]{Example}
\newcommand{\HH}{\ensuremath{\mathcal{H}}}
\def\go{\omega}
\def\gD{\Delta}
\def\cH{\mathcal{H}}
\title{Conformal Hypergraphs: Duality and Implications for the Upper Clique Transversal Problem}
\author{Endre Boros\\
\small MSIS Department and RUTCOR, Rutgers University, New Jersey, USA\\
\small \texttt{endre.boros@rutgers.edu}\\
\and
Vladimir Gurvich\\
\small RUTCOR, Rutgers University, New Jersey, USA\\
\small National Research University Higher School of Economics, Moscow, Russia\\
\small \texttt{vladimir.gurvich@gmail.com}\and
Martin Milani\v c\\
\small FAMNIT and IAM, University of Primorska, Koper, Slovenia\\
\small \texttt{martin.milanic@upr.si}
\and
Yushi Uno\\
\small Graduate School of Informatics,
\small Osaka Metropolitan University,
\small Sakai, Osaka, Japan\\
\small \texttt{yushi.uno@omu.ac.jp}
}
\date{}
\begin{document}
\maketitle
\begin{abstract}
\begin{sloppypar}
Given a hypergraph $\HH$, the dual hypergraph of $\HH$ is the hypergraph of all minimal transversals of $\HH$.
The dual hypergraph is always Sperner, that is, no hyperedge contains another.
A special case of Sperner hypergraphs are the conformal Sperner hypergraphs, which correspond to the families of maximal cliques of graphs.
All these notions play an important role in many fields of mathematics and computer science, including combinatorics, algebra, database theory, etc.
Motivated by a question related to clique transversals of graphs, we study in this paper conformality of dual hypergraphs and prove several results related to the problem of recognizing this property.
We show that the problem is in {\sf co-NP} and can be solved in polynomial time for hypergraphs of bounded dimension. 
For dimension $3$, we show that the problem can be reduced to {\sc $2$-Satisfiability}.
Our approach has an application in algorithmic graph theory: we obtain a polynomial-time algorithm for recognizing graphs in which all minimal transversals of maximal cliques have size at most $k$, for any fixed~$k$.

\bigskip
\noindent{\bf Keywords:} hypergraph, conformal hypergraph, dual hypergraph, maximal clique, upper clique transversal

\bigskip
\noindent{\bf MSC (2020):}
05C65, % Hypergraphs
05D15, % Transversal (matching) theory
05C69, % Vertex subsets with special properties (dominating sets, independent sets, cliques, etc.)
05C85, % Graph algorithms
68R10, % Graph theory in computer science
05-08 % Computational methods for problems pertaining to combinatorics
\end{sloppypar}
\end{abstract}

 \clearpage
 \tableofcontents
 \clearpage

\section{Introduction}

A \emph{hypergraph} is a finite set of finite sets called \emph{hyperedges}.
We consider the following two properties of hypergraphs.
A hypergraph is \emph{Sperner}~\cite{MR1544925} (also called \emph{simple}~\cite{zbMATH03400923,MR1013569} or a \emph{clutter}~\cite{zbMATH01859168}) if no hyperedge is contained in another hyperedge.
A hypergraph is \emph{conformal} if for each set $U$ of vertices, if each pair of vertices in $U$ is contained in some hyperedge, then $U$ is contained in some hyperedge (see, e.g.,~\cite{zbMATH01859168}).
Both notions play an important role in combinatorics and in many other fields of mathematics and computer science.
For example, Sperner hypergraphs and their extensions have numerous applications in algebra, theory of monotone Boolean functions, and databases
(see, e.g., Anderson~\cite{MR0892525} and Engel~\cite{MR1429390}).
Furthermore, conformal hypergraphs are important for databases (see, e.g.,
Beeri, Fagin, Maier, and Yannakakis~\cite{BeeriFMY83}) and arise naturally in algebraic topology (see Berge~\cite[p.~412, Exercise~1]{zbMATH03400923}).

It is interesting to investigate the above properties in relation with the concepts of blocking and antiblocking hypergraphs.
Given a hypergraph $\HH = (V,E)$, the \emph{blocking hypergraph} (or \emph{blocker}; see, e.g., Schrijver~\cite{zbMATH01859168}) of $\HH$ is the hypergraph with vertex set $V$ whose hyperedges are exactly the minimal sets of vertices that contain at least one vertex from each hyperedge.
This natural concept was studied under several other names in the literature, including \emph{transversal hypergraph} (see Berge~\cite{zbMATH03400923,MR1013569}),  \emph{hitting sets} (see Karp~\cite{MR0378476} and also Garey and Johnson~\cite{MR0519066}), or \emph{Menger dual} (see Woodall~\cite{zbMATH03606472}).
Furthermore, motivated by the equivalent concept of monotone Boolean duality (see, e.g., Crama and Hammer~\cite{zbMATH05852793}), the blocker of $\HH$ is also called the \emph{dual hypergraph} of $\HH$ and denoted by $\HH^d$.
Indeed, in the case of Sperner hypergraphs, the operation of mapping $\HH$ to its dual hypergraph is an involution, that is, $(\HH^d)^d = \HH$ (see, e.g., Berge~\cite{zbMATH03400923} and Schrijver~\cite{zbMATH01859168}).
Hypergraph duality has many applications, for example to Nash-solvability of two-person game forms; see Edmonds and Fulkerson~\cite{zbMATH03346366} for the zero-sum case, and Gurvich and Naumova~\cite{gurvich2022lexicographically} for the general two-person case.
Many other applications and references can be found in the papers by Eiter and Gottlob~\cite{MR1361157} and Makino and Kameda~\cite{zbMATH01670337}.
The complexity of the \emph{dualization problem}, that is, computing the dual hypergraph $\HH^d$ given $\HH$, is a notorious open problem (see Fredman and Khachiyan~\cite{MR1417667}).

Similarly to the blocker of a given hypergraph $\HH = (V,E)$, one can define the \emph{antiblocker} of $\HH$ as the hypergraph $\HH^a$ with vertex set $V$ whose hyperedges are exactly the maximal sets of vertices that contain at most one vertex from each hyperedge (see Fulkerson~\cite{zbMATH03360187}).
The antiblocker was also called
\emph{K\"{o}nig dual} by
Woodall~\cite{zbMATH03606472}; see also McKee~\cite{zbMATH03811629}.
Blockers and antiblockers are related to perfect graphs and polyhedral combinatorics and were considered together in several papers~\cite{zbMATH03402379,MR2755907,zbMATH03637903,zbMATH02016912,zbMATH06256806}.

It follows easily from the definitions that for every hypergraph $\HH$, its dual $\HH^d$ is always Sperner.
Furthermore, as explained above, if $\HH$ is also Sperner, then $(\HH^d)^d = \HH$.
Analogously, for every hypergraph $\HH$, its antiblocker $\HH^a$ is always conformal, and if $\HH$ is also conformal, then $(\HH^a)^a = \HH$, as shown by Woodall~\cite{zbMATH03606472,zbMATH03674127} (see also Schrijver~\cite{zbMATH01859168}).
However, while the antiblocker $\HH^a$ is always Sperner, the dual $\HH^d$ need not be conformal.
For example, all the $2$-element subsets of a $3$-element set form a hypergraph such that its dual is not conformal.
Moreover, even if a hypergraph is conformal, its dual may fail to be conformal.\footnote{Consider the $2$-uniform hypergraph $\HH$ given by the edges of the $5$-cycle, that is, $\HH = (V,E)$ with $V = \{1,2,3,4,5\}$ and $E = \{\{1,2\},\{2,3\},\{3,4\},\{4,5\},\{5,1\}\}$.
Clearly, $\HH$ is conformal.
However, $E(\HH^d) = \{
\{1,2,4\},
\{2,3,5\},
\{3,4,1\},
\{4,5,2\},
\{5,1,3\}
\}$.
In particular, every pair of vertices belongs to a hyperedge and hence $\HH^d$ is not conformal.}

\begin{table}[h!]
    \centering
    \caption{Properties of blockers and antiblockers.}
    \label{tab:my_label}
    \medskip
    \begin{tabular}{c|c|c}
         & Sperner & conformal \\
         \hline
        blocker, $\HH^d$ & always & not always \\
        \hline
        antiblocker, $\HH^a$ & always & always \\
    \end{tabular}
\end{table}

\begin{sloppypar}
The above relations, summarized in \Cref{tab:my_label}, motivate the study of hypergraphs whose dual is conformal.
Variants of dual conformality are important for the dualization problem (see Khachiyan, Boros, Elbassioni, and Gurvich~\cite{DBLP:conf/cocoon/KhachiyanBEG05,MR2352109,MR2287281}).
Conformal hypergraphs were characterized independently by Gilmore~\cite{gilmore1962families} (see also~\cite{zbMATH03400923,MR1013569}) and Zykov~\cite{zbMATH03485854}; the characterization leads to a polynomial-time recognition algorithm.
On other other hand, the complexity of recognizing hypergraphs whose dual is conformal is open.
In this paper we focus on this problem and call such hypergraphs \emph{dually conformal}.

We prove several results related to the problem of recognizing dually conformal hypergraphs.
After observing that the problem belongs to {\sf co-NP}, we develop our first main result, a polynomial-time algorithm for the case of hypergraphs of bounded dimension (maximum size of a hyperedge).
For hypergraphs of dimension at most~$3$ we develop an alternative approach based on {\sc $2$-Satisfiability}.
We also discuss separately the case of $2$-uniform hypergraphs, that is, the case of graphs.
\end{sloppypar}

Our second main result is related to a question on clique transversals in graphs, which is in fact our main motivation for the study of dually conformal hypergraphs.
More precisely, using another polynomially solvable case of the recognition problem of dually conformal hypergraphs, we obtain a polynomial-time algorithm for the following problem, for any fixed positive integer~$k$: Given a graph $G$, does $G$ admit a minimal clique transversal (that is, an inclusion-minimal set of vertices that intersects all maximal cliques) of size at least $k$?
This problem was studied recently by Milani\v{c} and Uno~\cite{MilanicUnoWG2023} and was shown to be {\sf NP}-hard when $k$ is part of the input.

\subsection*{Structure of the paper}

\begin{sloppypar}
In \Cref{sec:prelim} we summarize the necessary preliminaries, including some basic properties of conformal hypergraphs, both in the Sperner case and in general.
In \Cref{sec:dually-conformal} we present some basic results about dually conformal hypergraphs and initiate a study of the corresponding recognition problem by showing that the problem belongs to {\sf co-NP} and identifying a  polynomially solvable special case.
Applications of this algorithm to graphs are presented in \Cref{sec:small-upper-clique-transversal}.
In \Cref{sec:bounded-dimension} we develop a polynomial-time algorithm for the recognition of dually conformal hypergraphs of bounded dimension, with alternative approaches for the case of dimension $3$ and the $2$-uniform case.
We conclude the paper in \Cref{sec:discussion} with a discussion and several open questions.
\end{sloppypar}

\section{Preliminaries}\label{sec:prelim}

\subsection{Notation and definitions}

A \emph{hypergraph} is a pair $\HH = (V,E)$ where $V$ is a finite set of \emph{vertices} and $E$ is a set of subsets of $V$ called \emph{hyperedges} such that every vertex belongs to a hyperedge.
For a hypergraph $\HH = (V,E)$ we write $E(\HH) = E$ and $V(\HH) = V$, and denote by $\dim(\cH)=\max_{e\in E} |e|$ its \emph{dimension}.
A hypergraph $\HH$ is said to be \emph{$k$-uniform} if $|e| = k$ for all $e\in E(\HH)$.
Thus, $2$-uniform hypergraphs are precisely the (finite, simple, and undirected) graphs without isolated vertices.
We only consider graphs and hypergraphs with nonempty vertex sets.
For a vertex $v\in V$ its degree $\deg(v)=\deg_\cH(v)$ is the number of hyperedges in $E$ that contain $v$ and $\gD(\cH)=\max_{v\in V} \deg(v)$ is the maximum degree of $\cH$.
The \emph{size} of a hypergraph $\HH$ is the number of hyperedges in $\HH$.
A hyperedge of $\HH$ is said to be \emph{maximal} if it is not contained in any other hyperedge.
A hypergraph is \emph{Sperner} if no hyperedge contains another, or, equivalently, if every hyperedge is maximal.

A \emph{transversal} of a hypergraph $\HH = (V,E)$ is a set of vertices intersecting all hyperedges. A transversal is \emph{minimal} if it does not contain any other transversal.
Recall that the \emph{dual hypergraph} of a hypergraph $\HH = (V,E)$ is the hypergraph $\HH^d$ with vertex set $V$, whose hyperedges are exactly the minimal transversals of $\HH$.

\begin{fact}[Folklore; see, e.g., Berge~\cite{MR1013569}]\label{duality-is-an-involution}
Let $\HH$ be a Sperner hypergraph. Then $(\HH^d)^d = \HH$.
\end{fact}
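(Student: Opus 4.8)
The plan is to reduce the statement to a clean combinatorial characterization of the transversals of $\HH^d$ and then read off the minimal ones. The central claim I would isolate is the following: for a set $S \subseteq V$, the set $S$ is a transversal of $\HH^d$ if and only if $S$ contains some hyperedge of $\HH$. Once this is established, the hyperedges of $(\HH^d)^d$—being the minimal transversals of $\HH^d$—are exactly the inclusion-minimal sets among those containing a hyperedge of $\HH$, and these are precisely the minimal hyperedges of $\HH$. The Sperner hypothesis then finishes the argument: in a Sperner hypergraph every hyperedge is already inclusion-minimal, so the collection of minimal hyperedges of $\HH$ is $E(\HH)$ itself, whence $(\HH^d)^d = \HH$.

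To prove the claim I would argue the two directions separately. For the easy direction, suppose $e \subseteq S$ for some $e \in E(\HH)$, and let $T$ be any hyperedge of $\HH^d$, i.e. a minimal transversal of $\HH$. Since $T$ meets every hyperedge of $\HH$, in particular $T \cap e \neq \emptyset$, and because $e \subseteq S$ this gives $T \cap S \neq \emptyset$; as $T$ was arbitrary, $S$ is a transversal of $\HH^d$. For the converse I would use a complementation trick: assume $S$ contains no hyperedge of $\HH$ and show that $S$ fails to be a transversal of $\HH^d$. If no hyperedge of $\HH$ is contained in $S$, then every hyperedge of $\HH$ has a vertex outside $S$, so $V \setminus S$ meets every hyperedge of $\HH$ and is therefore a transversal of $\HH$. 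Hence $V \setminus S$ contains some minimal transversal $T$ of $\HH$, which is a hyperedge of $\HH^d$ disjoint from $S$; thus $S$ is not a transversal of $\HH^d$, as required.

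The step I expect to be the genuine crux—and the place where the Sperner hypothesis is indispensable—is the passage from transversals of $\HH^d$ to hyperedges of $(\HH^d)^d$. Taking minimal elements of the family of supersets of hyperedges of $\HH$ returns the family of minimal hyperedges of $\HH$ (its Sperner reduction), which in general differs from $\HH$; it is exactly the Sperner assumption that forces these two to coincide. I would also dispose of the degenerate bookkeeping: since the dual operation keeps the vertex set $V$ fixed, I should confirm that the two hypergraphs agree not merely on their hyperedge families but that no vertex is lost, which amounts to checking that every vertex of $V$ lies in some minimal transversal of $\HH$; this again follows from the Sperner assumption (for each vertex $v$ in a hyperedge $e$, no other hyperedge is contained in $e \setminus \{v\}$, so a transversal meeting $e$ only in $v$ can be built and then shrunk to a minimal one containing $v$). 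The remaining verifications—that every minimal transversal of $\HH$ is nonempty and that being a transversal is preserved under taking supersets—are routine, and I would not belabor them.
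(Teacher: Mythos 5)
Your proof is correct, and since the paper states this fact as folklore (citing Berge) without giving any proof, there is nothing in the paper to diverge from: your argument is precisely the standard one. The central claim---$S$ is a transversal of $\HH^d$ if and only if $S$ contains a hyperedge of $\HH$, proved via the complementation trick for the converse---is the textbook route, and you correctly localize the role of the Sperner hypothesis (passing from ``minimal sets containing a hyperedge'' to $E(\HH)$ itself). You even handle the bookkeeping point most sources skip, namely that every vertex lies in some minimal transversal (via a transversal meeting a chosen hyperedge $e$ only in $v$, which exists because Sperner-ness guarantees no hyperedge fits inside $e\setminus\{v\}$), so that $\HH^d$ is a legitimate hypergraph under the paper's convention that every vertex belongs to a hyperedge. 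No gaps.
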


\subsection{Representation of hypergraphs}\label{sec:representation}

In this subsection we describe a useful data structure for representing hypergraphs.
Let $\cH=(V,E)$ be a hypergraph.
We write $n=|V|$ and $m=|E|$.
An \emph{incident pair} of $\cH$ is a pair $(v,e)$ such that $v\in e\in E$.
We assume that $\cH$ is represented by a complete list of its edges, as subsets of $V$, and equipped with a fixed pair of orderings of its vertices and edges, say $V = \{v_1,\ldots,  v_n\}$ and $E = \{e_1,\ldots, e_m\}$.

We first perform a preprocessing step taking time $\mathcal{O}(|V||E|)$ in order to compute the {\it edge-vertex incidence matrix} of $\cH$, a binary matrix $I\in \{0,1\}^{E\times V}$ with rows indexed by the hyperedges of $\cH$, columns indexed by the vertices of $\cH$, and $I_{e,v} = 1$ if and only if $v\in e$.
Having constructed the edge-vertex incidence matrix, we can look up in constant time whether, given a vertex $v\in V$ and hyperedge $e\in E$, the pair $(v,e)$ is an incident pair of $\cH$.

Next we construct a {\it doubly linked representation of incident pairs} of $\cH$, that is, a collection $L$ of doubly linked lists of incident pairs, one for each vertex and one for each hyperedge.
Each incident pair contains a pointer to its vertex, another one to its hyperedge, and has four links -- horizontal \texttt{prev} and \texttt{next} and vertical \texttt{prev} and \texttt{next}.
The horizontal links form a doubly linked circular list attached to the hyperedge, and the vertical ones form a doubly linked circular list attached to the vertex. See \Cref{fig:example-hypergraph} for an example.
Due to the doubly linked nature, insertions can be done in constant time.
We can thus build the structure $L$ in $\mathcal{O}(|V||E|)$ time, as follows.
\begin{enumerate}
\item First, we initialize the doubly linked lists for each vertex and hyperedge to be the doubly linked lists consisting only of the corresponding vertex, resp.~hyperedge.
\item Then, we traverse the edge-vertex incidence matrix $I$ row by row.
As we traverse a row labeled by a hyperedge $e$, we build the doubly linked list corresponding to this hyperedge (with horizontal \texttt{prev} and \texttt{next} links) along with the pointers to $e$.
At the same time, when a new incident pair $(v,e)$ is added to the list, the doubly linked list corresponding to the vertex $v$ is augmented with this pair (with vertical \texttt{prev} and \texttt{next} links) and the pointer to vertex $v$.
\end{enumerate}

\begin{figure}[h!]
	\centering
	\includegraphics[width=0.85\textwidth]{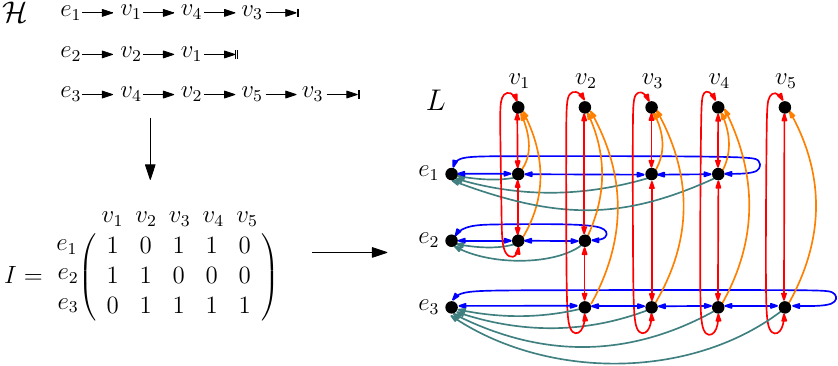}
	\caption{A hypergraph $\cH$, its edge-vertex incidence matrix, and the doubly linked representation of its incident pairs.}
	\label{fig:example-hypergraph}
\end{figure}

The usefulness of the above data structures is summarized in the following.

\begin{sloppypar}
\begin{proposition}\label{data-structure-for-hypergraphs}
Given a hypergraph $\cH = (V,E)$, with $V = \{v_1,\ldots,  v_n\}$ and $E = \{e_1,\ldots, e_m\}$, there is an algorithm running in time $\mathcal{O}(|V||E|)$ that computes its edge-vertex incidence matrix and the doubly linked
representation of its incident pairs.

Using the incidence matrix we can test in constant time the relation $v\in e$ for all $v\in V$ and $e\in E$.
Using the doubly linked representation of the incident pairs we can:
\begin{itemize}
\item list the vertices of a hyperedge $e\in E$ in time linear in $|e|\leq \dim(\cH)$;
\item list the hyperedges containing a vertex $v\in V$ in time linear in $\deg(v)\leq \gD(\cH)$;
\item compute for any two hyperedges $e$ and $f$ their union, intersection, and the two set differences $e\setminus f$ and $f\setminus e$ in time $\mathcal{O}(|e|+|f|) = \mathcal{O}(\dim(\cH))$; in particular, we can test in time $\mathcal{O}(\dim(\cH))$ if $e\subseteq f$.
\end{itemize}
\end{proposition}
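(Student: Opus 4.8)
The plan is to verify each assertion against the two data structures described above, taking the construction procedures essentially as given and focusing on why the stated running times hold. The proof decomposes naturally according to the structure of the statement: first the construction and the constant-time membership test, then the two list-traversal operations, and finally the set operations, which are where the two representations must be combined.

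For the construction, I would first argue that the edge-vertex incidence matrix $I$ can be built in $\mathcal{O}(|V||E|)$ time: initializing an $m \times n$ array of zeros costs $\mathcal{O}(|V||E|)$, and then setting $I_{e,v} = 1$ for every incident pair $(v,e)$ costs $\sum_{e \in E} |e| \leq \dim(\cH)\cdot |E| \leq |V||E|$ additional operations. For the doubly linked representation I would appeal to the step-by-step procedure given before the statement: each of the at most $|V||E|$ incident pairs is inserted exactly once, and each insertion is a constant-time splice into the circular horizontal list of its hyperedge and the circular vertical list of its vertex. The constant-time test of the relation $v \in e$ is then immediate, since it amounts to reading the single entry $I_{e,v}$.

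Next I would treat the two listing operations. Listing the vertices of a hyperedge $e$ reduces to traversing the horizontal circular list attached to $e$ and following the vertex pointer of each incident pair; since this list contains exactly $|e|$ pairs, the traversal takes $\mathcal{O}(|e|) \leq \mathcal{O}(\dim(\cH))$ time. Symmetrically, listing the hyperedges containing a vertex $v$ traverses the vertical list attached to $v$, which contains exactly $\deg(v)$ pairs, giving time $\mathcal{O}(\deg(v)) \leq \mathcal{O}(\gD(\cH))$.

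The step requiring the most care, and the one I would regard as the main obstacle, is the computation of the set operations, since here the list traversals and the incidence matrix must be used together to hit the $\mathcal{O}(|e|+|f|)$ bound. For the intersection $e \cap f$ and the difference $e \setminus f$, I would list the vertices of $e$ in time $\mathcal{O}(|e|)$ and, for each such vertex $v$, test $v \in f$ in constant time using $I$, retaining $v$ when the test succeeds (for the intersection) or fails (for the difference); this costs $\mathcal{O}(|e|)$ overall, and $f \setminus e$ is handled symmetrically in $\mathcal{O}(|f|)$. For the union, to avoid emitting shared vertices twice I would output $e$ together with $f \setminus e$, costing $\mathcal{O}(|e|+|f|)$. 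Finally, the subset test $e \subseteq f$ is the special case of deciding whether $e \setminus f$ is empty, which the above computes in $\mathcal{O}(|e|) \leq \mathcal{O}(\dim(\cH))$ time. Since $|e|,|f| \leq \dim(\cH)$, every bound is $\mathcal{O}(\dim(\cH))$, completing the verification.
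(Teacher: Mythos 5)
Your proof is correct and follows essentially the same route as the paper: the paper justifies the construction by the two-phase procedure described before the statement (build the incidence matrix in $\mathcal{O}(|V||E|)$ time, then splice each incident pair into its horizontal and vertical circular lists in constant time), and leaves the operational claims as consequences of traversing the lists and querying the matrix, exactly as you argue. Your explicit treatment of the set operations (scanning $e$ while testing membership in $f$ via the matrix, and forming the union as $e$ together with $f\setminus e$) fills in details the paper takes as evident, with no deviation in method.
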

\end{sloppypar}

Let us also remark that, when discussing the running times of algorithms on graphs (in \Cref{sec:small-upper-clique-transversal}), we assume that the adjacency lists are sorted.
If they are initially not sorted, we first sort them in time $\mathcal{O}(|V|+|E|)$ (see~\cite{MR2063679}).

\subsection{Subtransverals}

Given a hypergraph $\HH = (V,E)$, a set $S\subseteq V$ is a \emph{subtransversal} of $\HH$ if $S$ is a subset of a minimal transversal.
The following characterization of subtransversals due to Boros, Gurvich, and Hammer~\cite[Theorem 1]{MR1754735} was formulated first in terms of prime implicants of monotone Boolean functions and their duals, and re-proved in terms of hypergraphs in~\cite{MR1841121}.
Given a set $S\subseteq V$ and a vertex $v\in S$, we denote by $E_v(S)$ the set of hyperedges $e\in E$ such that $e\cap S = \{v\}$.

\begin{theorem}[Boros, Gurvich, Elbassioni, and Khachiyan~\cite{MR1841121}]\label{subtransversal-characterization}
Let $\mathcal{H} = (V,E)$ be a hypergraph and let $S\subseteq V$.
Then $S$ is a subtransversal of $\HH$ if and only if there exists a collection of hyperedges
$\{e_v\in E_v(S) : v\in S\}$ such that the set $(\bigcup_{v\in S}e_v)\setminus S$
does not contain any hyperedge of $\HH$.
\end{theorem}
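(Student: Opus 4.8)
The plan is to reduce everything to one clean characterization of minimal transversals: a transversal $T$ of $\HH$ is minimal if and only if every vertex $v\in T$ has a \emph{private} hyperedge, i.e.\ a hyperedge $e$ with $e\cap T=\{v\}$. I would first record this folklore fact (one direction is the statement that $v$ is non-removable precisely because $T\setminus\{v\}$ misses $e$; the other is a direct check that deleting any $v$ uncovers its private edge). I would also use the elementary observation that, for $W\subseteq V$, the complement $V\setminus W$ is a transversal of $\HH$ if and only if $W$ contains no hyperedge of $\HH$.

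For the forward direction, suppose $S$ is a subtransversal, so $S\subseteq T$ for some minimal transversal $T$. For each $v\in S$ the private-edge characterization yields a hyperedge $e_v$ with $e_v\cap T=\{v\}$; since $S\subseteq T$ this immediately gives $e_v\cap S=\{v\}$, i.e.\ $e_v\in E_v(S)$. It then remains to check that $W:=(\bigcup_{v\in S}e_v)\setminus S$ contains no hyperedge. The key point is that $W$ is disjoint from $T$: any $w\in W$ lies in some $e_v$ with $w\notin S$, and were $w\in T$ we would get $w\in e_v\cap T=\{v\}\subseteq S$, a contradiction. Hence any hyperedge contained in $W$ would be disjoint from $T$, contradicting that $T$ is a transversal.

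For the converse, assume a collection $\{e_v\in E_v(S):v\in S\}$ is given with $W:=(\bigcup_{v\in S}e_v)\setminus S$ containing no hyperedge. Then $T_0:=V\setminus W$ is a transversal, and $S\subseteq T_0$ since $W$ is disjoint from $S$ by construction. Moreover each $v\in S$ already has a private edge relative to $T_0$: because $e_v\cap S=\{v\}$ we have $e_v\setminus\{v\}=e_v\setminus S\subseteq W$, so $e_v\cap T_0=\{v\}$. I would then greedily delete removable vertices from $T_0$ (vertices whose deletion keeps it a transversal); the process terminates at a minimal transversal $T^*\subseteq T_0$. The crucial point is that no vertex of $S$ is ever removable along the way: as the current transversal $T$ shrinks inside $T_0$ we still have $e_v\cap T\subseteq e_v\cap T_0=\{v\}$, so while $v\in T$ its edge $e_v$ is hit only by $v$, and deleting $v$ would uncover $e_v$. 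Thus $S\subseteq T^*$, witnessing that $S$ is a subtransversal.

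The only genuinely delicate step is this last one: it is not enough to exhibit \emph{some} minimal transversal inside $T_0$, since a priori the pruning might discard vertices of $S$. The private edges $e_v$ are exactly what guarantee that the greedy reduction can be performed while keeping all of $S$ intact. Everything else is routine bookkeeping with the two elementary facts recorded at the outset.
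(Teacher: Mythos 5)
Your proof is correct. Note that the paper itself gives no proof of this statement---it is imported verbatim, with citation, from Boros, Gurvich, Elbassioni, and Khachiyan~\cite{MR1841121} (and ultimately from Boros, Gurvich, and Hammer~\cite{MR1754735}, where it was phrased via dual subimplicants of monotone Boolean functions)---so there is no in-paper argument to compare against; your write-up matches the standard hypergraph proof from that literature. Both of your supporting ingredients are sound: the private-edge characterization of minimal transversals, and the observation that $V\setminus W$ is a transversal if and only if $W$ contains no hyperedge. The forward direction (privacy with respect to $T$ implies privacy with respect to $S\subseteq T$, and $W$ is disjoint from $T$) is airtight, and you correctly identified the one delicate point in the converse: it does not suffice that $T_0=V\setminus W$ is a transversal containing $S$; one must check that greedy pruning of $T_0$ never deletes a vertex of $S$. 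Your monotonicity argument---$e_v\cap T\subseteq e_v\cap T_0=\{v\}$ for every intermediate transversal $T\subseteq T_0$ containing $v$, so $e_v$ certifies non-removability of $v$ throughout---closes exactly that gap. The only cosmetic remark is that the edge case $S=\emptyset$ (empty collection, $W=\emptyset$) is vacuously consistent under the paper's convention that every vertex lies in some hyperedge, so nothing further is needed.
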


\begin{sloppypar}
Note that edges that intersect $S$ in more than one vertex do not influence the fact whether $S$ is a subtransversal or not.
The problem of determining if a given set $S$ is a subtransversal is {\sf NP}-complete even for $2$-uniform hypergraphs (see~\cite{MR1754735,MR1841121}).
For sets of bounded cardinality, however,
\Cref{subtransversal-characterization} leads to a polynomial-time algorithm.
\end{sloppypar}

\begin{sloppypar}
\begin{corollary}\label{subtransversal-running-time}
Let $\mathcal{H} = (V,E)$ be a hypergraph with dimension $k$ and maximum degree~$\gD$, given by an edge-vertex incidence matrix and a doubly linked representation of its incident pairs, and let $S\subseteq V$.
Then, there exists an algorithm running in time
\[
\mathcal{O}\left(k|E|\cdot\min\left\{\gD^{|S|}\,,\left(\frac{|E|}{|S|}\right)^{|S|}\right\}\right)
\]
that determines if $S$ is a subtransversal of $\HH$.
In particular, if $|S| = \mathcal{O}(1)$, the complexity is $\mathcal{O}(k|E|\Delta^{|S|})$.
\end{corollary}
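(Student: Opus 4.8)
The plan is to convert the characterization in \Cref{subtransversal-characterization} directly into a bounded search. By that theorem, $S$ is a subtransversal of $\HH$ if and only if there is a choice of hyperedges $e_v \in E_v(S)$, one for each $v \in S$, such that the set $T := (\bigcup_{v \in S} e_v) \setminus S$ contains no hyperedge of $\HH$. The algorithm thus computes the lists $E_v(S)$ for all $v \in S$, enumerates every system $(e_v)_{v\in S} \in \prod_{v\in S} E_v(S)$, and for each system builds $T$ and tests whether $T$ contains some hyperedge; it reports that $S$ is a subtransversal as soon as a system with no contained hyperedge is found, and reports the contrary once all systems have been tried (in particular, immediately if some $E_v(S)$ is empty).

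First I would bound the preprocessing. After marking the vertices of $S$, a single scan of the hyperedges---listing each $e \in E$ via the doubly linked representation in time $\mathcal{O}(\dim(\HH)) = \mathcal{O}(k)$ (\Cref{data-structure-for-hypergraphs})---suffices to compute $|e \cap S|$ and to record, for each $e$ with $|e \cap S| = 1$, the unique $v \in e \cap S$, thereby filling all the lists $E_v(S)$. This costs $\mathcal{O}(k|E|)$ time, which absorbs the $\mathcal{O}(|S|)$ spent on marking because every vertex lies in some hyperedge and hence $|S| \le |V| \le k|E|$. If some $E_v(S)$ is empty the algorithm stops; otherwise the lists $E_v(S)$ are pairwise disjoint (a hyperedge counted in $E_v(S)$ meets $S$ exactly in $v$) and nonempty, which forces $|S| \le |E|$.

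The core of the analysis is to bound both the number of systems and the cost per system. Since $E_v(S)$ consists of hyperedges containing $v$, we have $|E_v(S)| \le \deg(v) \le \Delta$, so there are at most $\Delta^{|S|}$ systems. On the other hand, disjointness gives $\sum_{v\in S} |E_v(S)| \le |E|$, whence the AM--GM inequality yields $\prod_{v\in S} |E_v(S)| \le (|E|/|S|)^{|S|}$; together these bound the number of systems by $\min\{\Delta^{|S|}, (|E|/|S|)^{|S|}\}$. For a fixed system, the union $\bigcup_{v\in S} e_v$ has at most $k|S|$ vertices and is assembled by marking in $\mathcal{O}(k|S|)$ time; removing $S$ leaves $T$ within the same bound. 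Keeping $T$ marked, we then test every hyperedge $f$ for $f \subseteq T$ by listing $f$ and checking that each of its vertices is marked, in $\mathcal{O}(\sum_{f \in E} |f|) = \mathcal{O}(k|E|)$ total time, and finally unmark. As $|S| \le |E|$ in the non-trivial case, the per-system cost is $\mathcal{O}(k|S| + k|E|) = \mathcal{O}(k|E|)$, and multiplying by the number of systems gives the claimed bound. The final assertion follows at once from $\min\{\Delta^{|S|}, (|E|/|S|)^{|S|}\} \le \Delta^{|S|}$, which is polynomial when $|S| = \mathcal{O}(1)$.

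The step I expect to be the main obstacle is the two-sided estimate on the number of systems: the bound $\Delta^{|S|}$ is direct, but the bound $(|E|/|S|)^{|S|}$ depends on first establishing that the lists $E_v(S)$ are pairwise disjoint and then invoking AM--GM on their sizes under the constraint $\sum_{v\in S} |E_v(S)| \le |E|$. Secondary care is needed so that the containment test stays within $\mathcal{O}(k|E|)$ per system through the marking/unmarking trick, and so that the preprocessing and marking costs are genuinely absorbed, which is exactly where the inequalities $|S| \le |V| \le k|E|$ and $|S| \le |E|$ are used.
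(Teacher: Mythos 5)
Your proposal is correct and follows essentially the same route as the paper's proof: both reduce to \Cref{subtransversal-characterization}, enumerate all selections $(e_v)_{v\in S}$ with $e_v\in E_v(S)$, bound their number by $\min\{\Delta^{|S|},(|E|/|S|)^{|S|}\}$, and spend $\mathcal{O}(k|E|)$ per selection using a marking array that is restored afterwards. The only cosmetic differences are that you test every hyperedge for containment in $U\setminus S$ where the paper tests only the hyperedges disjoint from $S$ against $U$ (equivalent, same asymptotic cost), and that you spell out the disjointness-plus-AM--GM argument for the $(|E|/|S|)^{|S|}$ bound, which the paper states without justification.
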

\end{sloppypar}

\begin{proof}
Note that any minimal transversal has at most as many vertices as the number of hyperedges.
Thus, if $|S|>|E|$, then we can determine in time $\mathcal{O}(|S|+|E|)= \mathcal{O}(k|E|)$ that $S$ is not a subtransversal.
Note that $\mathcal{O}(|S|) = \mathcal{O}(|V|) =\mathcal{O}(k|E|)$, since $V = \bigcup_{e\in E}e$.

From now on, we assume that $|S|\leq |E|$.
To a subset $S\subseteq V$ we associate the following families of edges:
\[
\begin{array}{rll}
E_v(S) &=~ \{e\in E\mid e\cap S=\{v\}\} &\text{ for }~~~ v\in S, \text{ and, }\\
E_\go(S) &=~ \{e\in E\mid e\cap S=\emptyset\}.
\end{array}
\]
We describe the desired algorithm with the following procedure.
For convenience, we also include, in smaller font, a time complexity analysis of each step.

\bigskip
\noindent\textsc{Procedure SubTransversal:}
\begin{description}
	\item[\hspace*{5mm}Input:] A hypergraph $\cH=(V,E)$ given by an edge-vertex incidence matrix and a doubly linked representation $L$ of its incident pairs, a subset $S\subseteq V$ such that $|S|\leq |E|$.
	\item[\hspace*{5mm}Output:] \textsc{Yes} if $S$ is a subset of a minimal transversal of $\cH$, and \textsc{No} otherwise.

	\item[\hspace*{5mm}Step 1:] Compute the families $E_u(S)$ for $u\in S\cup\{\go\}$.
  \begin{itemize}
      \item[] {\small \it This can be done in time $\mathcal{O}(|S|+k|E|)=\mathcal{O}(k|E|)$, as follows. We first traverse the set $S$ and mark each vertex that belongs to $S$.
		 Then for each hyperedge $e\in E$ we traverse the corresponding list of $\mathcal{O}(k)$ vertices; if the hyperedge $e$ contains no vertex from $S$, we put it in $E_{\omega}(S)$, and if it contains a unique vertex from $S$, say $v$, we put it in $E_v(S)$.}
  \end{itemize}
	\begin{description}
		\item[1.1] If $E_v(S)=\emptyset$ for some $v\in S$, then \textsc{STOP} and output \textsc{No}.
  \begin{itemize}
      \item[]  {\small \it This can be done in time $\mathcal{O}(|S|)$.}
 \end{itemize}
		\item[1.2] Otherwise, if $E_\go(S)=\emptyset$, then \textsc{STOP} and output \textsc{Yes} ($S$ is a minimal transversal of $\cH$ in this case).
    \begin{itemize}
      \item[]   {\small \it This can be done in time $\mathcal{O}(1)$.}
 \end{itemize}
	\end{description}
	\item[\hspace*{5mm}Step 2:]
	Initialize an array $A\in \{0,1\}^V$ of length $n$ by zeros.
      \begin{itemize}
      \item[]   {\small \it This can be done in time $\mathcal{O}(|V|) = \mathcal{O}(k|E|)$.
	(Recall that $|V|\le k|E|$, since $V = \bigcup_{e\in E}e$.)}
 \end{itemize}
	For each selection $e_v\in E_v(S)$, $v\in S$:
      \begin{itemize}
      \item[]  {\small \it The number of such selections is $\displaystyle\prod_{v\in S}|E_v(S)| \leq \min \left\{\gD^{|S|}, \left(\frac{|E|}{|S|}\right)^{|S|}\right\}$.}
 \end{itemize}

	\begin{description}
		\item[2.1] Compute $\displaystyle U=\bigcup_{v\in S} e_v$.

     \begin{itemize}
      \item[]  {\small \textit{This can be done in time $\mathcal{O}(k|S|)$, as follows.
       We first create an object for $U$ with a root of a doubly linked list that is initially empty (}\texttt{prev} \textit{and} \texttt{next} \textit{point back to itself).
		We then iterate over all $v\in S$ and look up the vertices of the edge $e_v$, one by one, and for each such vertex $u\in e_v$ first check the value of $A_u$.
		If $A_u = 0$, we set $A_u = 1$ and then we add $u$, with the corresponding} \texttt{prev} \textit{and} \texttt{next} \textit{links, to the list of~$U$.
        This takes time $\mathcal{O}(k)$ per edge $e_v$ (for $v\in S$), resulting in a total time of $\mathcal{O}(k|S|)$.}
		
		\textit{Note that this procedure is correct, since at the end of the procedure, the array $A$ will have $A_u = 1$ if and only if $u\in U$.}}
 \end{itemize}
		
		\medskip
		\item[2.2] \textsc{STOP} and output \textsc{Yes} if $e\not\subseteq U$ for all $e\in E_\go(S)$.
   \begin{itemize}
      \item[]  {\small \it This can be done in time $\mathcal{O}(k|E_\go(S)|)=\mathcal{O}(k|E|)$, as follows.
      For a given $e\in E_\omega(S)$ the test $e\not\subseteq U$ can be performed in time $\mathcal{O}(|e|) = \mathcal{O}(k)$ by scanning the doubly linked list of $e$ and checking the corresponding entries of the array~$A$.}
 \end{itemize}
		
		\medskip
		
        \item[2.3] Restore the array $A$ to the all-zero array.

   \begin{itemize}
      \item[]  {\small \it This can be done in time $\mathcal{O}(k|S|)$, by
      scanning the set $U$ once in linear time in the length of this set, which is $\mathcal{O}(k|S|)$, and switching back the corresponding entries in the array $A$ to zero.}
 \end{itemize}

	\end{description}
	\item[\hspace*{5mm}Step 3:] \textsc{STOP} and output \textsc{No}.
   \begin{itemize}
      \item[]   {\small \it This can be done in time $\mathcal{O}(1)$.}
 \end{itemize}
\end{description}

Thus, we get two upper estimates for the running time of \textsc{SubTransversal}:
\[
\mathcal{O}\left(k|E|\gD^{|S|}\right) ~~~\text{ and }~~~ \mathcal{O}\left(k|E|\left(\frac{|E|}{|S|}\right)^{|S|}\right)\,,
\]
as claimed.
\end{proof}

\subsection{Conformal hypergraphs}\label{sec:conformal-hypergraphs}

\begin{sloppypar}
In this section we summarize some basic properties of conformal hypergraphs: a characterization of conformal Sperner hypergraphs, which establishes a close connection with graphs, a characterization of general conformal hypergraphs, and a polynomial-time recognition algorithm of conformal hypergraphs.
\end{sloppypar}

\begin{sloppypar}
All the graphs in this paper are finite, simple, and undirected.
We use standard graph theory terminology, following West~\cite{MR1367739}.

Given a hypergraph $\HH = (V,E)$, its \emph{co-occurrence graph} is the graph $G(\HH)$ with vertex set $V$ that has an edge between two distinct vertices $u$ and $v$ if there is a hyperedge $e$ of $\HH$ that contains both $u$ and $v$.

\begin{observation}\label{hyperedges are cliques in co-occurrence graph}
For every hypergraph $\HH$, every hyperedge of $\HH$ is a clique in the co-occurrence graph $G(\HH)$.
\end{observation}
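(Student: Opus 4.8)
The plan is to prove the statement by directly unwinding the two definitions involved, namely that of a clique and that of the co-occurrence graph $G(\HH)$. Recall that a clique in a graph is a set of pairwise adjacent vertices, and that in $G(\HH)$ two distinct vertices $u$ and $v$ are declared adjacent precisely when some hyperedge of $\HH$ contains both of them. So to show that a hyperedge $e$ is a clique, it suffices to verify that every pair of distinct vertices lying in $e$ is an edge of $G(\HH)$.

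Concretely, I would fix an arbitrary hyperedge $e \in E(\HH)$ and take any two distinct vertices $u, v \in e$. Since $e$ is itself a hyperedge of $\HH$ containing both $u$ and $v$, the defining condition for adjacency in $G(\HH)$ is met, so $u$ and $v$ are adjacent in $G(\HH)$. As the pair $\{u,v\}$ was chosen arbitrarily among the distinct pairs in $e$, all such pairs are adjacent, which is exactly the statement that $e$ is a clique of $G(\HH)$.

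There is essentially no obstacle here: the observation is an immediate consequence of the definition of $G(\HH)$, with the witnessing hyperedge for each pair being $e$ itself. The only degenerate case worth a remark is that of a singleton hyperedge $e = \{u\}$, for which the claim holds vacuously since there are no distinct pairs to check, and a single vertex is trivially a clique. Thus no nontrivial combinatorial argument is needed, and the proof reduces to this one-line verification.
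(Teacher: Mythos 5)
Your proof is correct and is exactly the definitional unwinding the paper has in mind: the paper states this as an \emph{Observation} with no written proof precisely because any two distinct vertices of a hyperedge $e$ are adjacent in $G(\HH)$ with $e$ itself as the witnessing hyperedge. Your remark on singleton hyperedges being vacuously cliques is a fine, harmless addition.
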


Note, however, that hyperedges of $\HH$ are not necessarily maximal cliques of $G(\HH)$. For example, if $\HH$ is the complete graph $K_3$, then $G(\HH) = \HH$, but $G(\HH)$ has a unique maximal clique of size $3$.

Recall that a hypergraph is said to be \emph{conformal} if for each set $U$ of vertices, if each pair of vertices in $U$ is contained in some hyperedge, then $U$ is contained in some hyperedge.
It is not difficult to see that a hypergraph $\HH$ is conformal if and only if every maximal clique of its co-occurrence graph is a hyperedge of $\HH$ (in fact, this was the definition of conformality given by Berge~\cite{zbMATH03400923,MR1013569}).
Furthermore, a Sperner hypergraph $\HH$ is conformal if and only if every maximal clique of its co-occurrence graph is a hyperedge of $\HH$ (see~\cite{BeeriFMY83}).
\end{sloppypar}

We now recall a characterization of Sperner conformal hypergraphs due to Beeri, Fagin, Maier, and Yannakakis~\cite{BeeriFMY83} (see also Berge~\cite{zbMATH03400923,MR1013569} for the equivalence between properties~\ref{SC-item1} and~\ref{SC-item2}).
The \emph{clique hypergraph} of a graph $G = (V,E)$ is the hypergraph with vertex set $V$ with hyperedges exactly the maximal cliques in $G$.

\begin{theorem}[\cite{BeeriFMY83}; see also~\cite{zbMATH03400923,MR1013569}]\label{Sperner-conformal}
For every Sperner hypergraph $\HH$, the following properties are equivalent.
\begin{enumerate}
\item\label{SC-item1} $\HH$ is conformal.
\item\label{SC-item2} $\HH$ is the clique hypergraph of some graph.
\item\label{SC-item3} $\HH$ is the clique hypergraph of its co-occurrence graph.
\end{enumerate}
\end{theorem}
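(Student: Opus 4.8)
The plan is to establish the cyclic chain \ref{SC-item1}$\Rightarrow$\ref{SC-item3}$\Rightarrow$\ref{SC-item2}$\Rightarrow$\ref{SC-item1}. The main tool will be the characterization recorded just before the statement, namely that a hypergraph $\HH$ is conformal if and only if every maximal clique of its co-occurrence graph $G(\HH)$ is a hyperedge of $\HH$; I will also use \Cref{hyperedges are cliques in co-occurrence graph} (every hyperedge is a clique of $G(\HH)$) and, crucially in one implication, the Sperner hypothesis. The implication \ref{SC-item3}$\Rightarrow$\ref{SC-item2} is immediate, since the co-occurrence graph is itself a graph, so if $\HH$ is the clique hypergraph of $G(\HH)$ then it is the clique hypergraph of \emph{some} graph.

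For \ref{SC-item2}$\Rightarrow$\ref{SC-item1}, I would assume $\HH$ is the clique hypergraph of some graph $G=(V,E)$, so that the hyperedges of $\HH$ are exactly the maximal cliques of $G$. The first step is to verify that the co-occurrence graph coincides with $G$, i.e. $G(\HH)=G$: if $uv\in E$ then $\{u,v\}$ extends to a maximal clique of $G$, which is a hyperedge containing both $u$ and $v$, so $u,v$ are adjacent in $G(\HH)$; conversely any hyperedge is a clique of $G$, so adjacency in $G(\HH)$ forces $uv\in E$. Once $G(\HH)=G$ is established, the maximal cliques of $G(\HH)$ are precisely the maximal cliques of $G$, which are exactly the hyperedges of $\HH$; the characterization then yields that $\HH$ is conformal. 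Note that this implication does not require the Sperner hypothesis, since a clique hypergraph is automatically Sperner.

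For \ref{SC-item1}$\Rightarrow$\ref{SC-item3}, I would assume $\HH$ is Sperner and conformal and prove that its hyperedges are exactly the maximal cliques of $G(\HH)$. Conformality gives one inclusion directly via the characterization: every maximal clique of $G(\HH)$ is a hyperedge. For the reverse inclusion, take any hyperedge $e$; it is a clique of $G(\HH)$ by \Cref{hyperedges are cliques in co-occurrence graph}, and if it failed to be maximal it would satisfy $e\subsetneq C$ for some maximal clique $C$ of $G(\HH)$, which is again a hyperedge by conformality, contradicting the Sperner property. Hence every hyperedge is a maximal clique of $G(\HH)$, so $\HH$ is the clique hypergraph of $G(\HH)$. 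This is precisely the step where the Sperner assumption is indispensable. The only genuinely technical point I anticipate is the clean verification that $G(\HH)=G$ in the second implication (making sure both edge directions and the vertex set match, including isolated-vertex corner cases where a vertex forms a singleton maximal clique); everything else reduces to bookkeeping against the conformality characterization and the definition of the clique hypergraph.
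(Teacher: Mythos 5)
Your proof is correct: the cycle \ref{SC-item1}$\Rightarrow$\ref{SC-item3}$\Rightarrow$\ref{SC-item2}$\Rightarrow$\ref{SC-item1} is sound, the Sperner hypothesis is invoked exactly where it is indispensable (promoting hyperedges to \emph{maximal} cliques), and your inline verification that $G(\HH)=G$ is precisely \Cref{a-technical-lemma}. The paper itself imports this theorem from~\cite{BeeriFMY83} without proof, but your argument is the Sperner specialization of the paper's own proof of the generalization \Cref{characterization of conformality}, so it matches the paper's approach essentially step for step.
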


We now generalize \Cref{Sperner-conformal} by characterizing the conformality property for general (not necessarily Sperner) hypergraphs.

\begin{lemma}\label{a-technical-lemma}
Let $\HH$ be a hypergraph such that there exists a graph $G = (V,E)$ and a collection $\mathcal{C}$ of cliques of $G$ containing all maximal cliques of  $G$ (and possibly some others) such that $\HH = (V,\mathcal{C})$.
Then $G = G(\HH)$.
\end{lemma}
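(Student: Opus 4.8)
The plan is to prove $G = G(\HH)$ by showing that the two graphs share the same vertex set and the same edge set. Both graphs have vertex set $V$: we are given $G = (V,E)$, and by the definition of the co-occurrence graph $G(\HH)$ also has vertex set $V(\HH) = V$. Hence it suffices to verify that their edge sets coincide, i.e.\ $E = E(G(\HH))$, which I would establish by proving the two inclusions separately.

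For the inclusion $E(G(\HH)) \subseteq E$, I would take an arbitrary edge $uv$ of $G(\HH)$. By the definition of the co-occurrence graph, there is a hyperedge of $\HH$, that is, a member $C \in \mathcal{C}$, with $u,v \in C$. Since every member of $\mathcal{C}$ is by hypothesis a clique of $G$ and $u \neq v$, the pair $\{u,v\}$ is an edge of $G$. This is exactly where the ``possibly some others'' clause is accounted for: even a non-maximal clique in $\mathcal{C}$ can only witness co-occurrences that are already edges of $G$.

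For the reverse inclusion $E \subseteq E(G(\HH))$, I would take an arbitrary edge $uv \in E$. Then $\{u,v\}$ is a clique of $G$ and hence is contained in some maximal clique $M$ of $G$. Because $\mathcal{C}$ contains \emph{all} maximal cliques of $G$, we have $M \in \mathcal{C}$, so $M$ is a hyperedge of $\HH$ containing both $u$ and $v$; thus $uv \in E(G(\HH))$. Combining the two inclusions yields $E = E(G(\HH))$ and therefore $G = G(\HH)$.

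The argument is short, and I do not anticipate a genuine obstacle. The only point requiring care is the bookkeeping of the two hypotheses and the roles they play: one inclusion relies on the fact that each member of $\mathcal{C}$ is a clique of $G$ (so co-occurrence forces adjacency in $G$), while the other crucially uses that $\mathcal{C}$ captures every maximal clique (so each edge of $G$ is witnessed by some hyperedge). Keeping these two uses distinct is the whole substance of the proof.
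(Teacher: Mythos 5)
Your proof is correct and follows essentially the same route as the paper's: both establish $V(G(\HH)) = V(G)$ and then show the edge sets coincide, using that every member of $\mathcal{C}$ is a clique of $G$ for one direction and that $\mathcal{C}$ contains all maximal cliques for the other. The paper phrases this as a chain of equivalences for adjacency rather than two set inclusions, but the substance is identical.
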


\begin{proof}
We have $V(G(\HH)) = V(\HH) = V = V(G)$.
Furthermore, two distinct vertices $u$ and $v$ are adjacent in $G$ if and only if there exists a maximal clique in $G$ containing both $u$ and $v$, and they are adjacent in the co-occurrence graph $G(\HH)$ if and only if there exists a hyperedge of $\HH$ containing $u$ and $v$.
The assumption on $\mathcal{C}$ implies that there exists a maximal clique in $G$ containing both vertices if and only if there exists a set in $\mathcal{C}$ containing both. Thus, since $E(\HH) = \mathcal{C}$, we infer that graphs $G$ and $G(\HH)$ have the same edge sets.
We conclude that $G = G(\HH)$.
\end{proof}

\begin{theorem}\label{characterization of conformality}
For every hypergraph $\HH$, the following properties are equivalent.
\begin{enumerate}
\item\label{C-item1} $\HH$ is conformal.
\item\label{C-item3} Every maximal clique in $G(\HH)$ is a maximal hyperedge of $\HH$.
\item\label{C-item4} There exists a graph $G = (V,E)$ and a collection $\mathcal{C}$ of cliques of $G$ containing all maximal cliques of  $G$ (and possibly some others) such that $\HH = (V,\mathcal{C})$.
\end{enumerate}
\end{theorem}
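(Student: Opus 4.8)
The plan is to establish the cycle of implications \ref{C-item1} $\Rightarrow$ \ref{C-item3} $\Rightarrow$ \ref{C-item4} $\Rightarrow$ \ref{C-item1}, leaning throughout on the fact recalled just before \Cref{Sperner-conformal} that a hypergraph $\HH$ is conformal if and only if every maximal clique of its co-occurrence graph $G(\HH)$ is a hyperedge of $\HH$, together with \Cref{hyperedges are cliques in co-occurrence graph} and \Cref{a-technical-lemma}.

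For \ref{C-item1} $\Rightarrow$ \ref{C-item3}, I would start from conformality and apply the recalled characterization, which already gives that every maximal clique $K$ of $G(\HH)$ is a hyperedge of $\HH$. The only remaining task is to upgrade ``hyperedge'' to ``maximal hyperedge''. For this I would argue that if some hyperedge $e$ satisfied $K \subseteq e$, then $e$ is a clique of $G(\HH)$ by \Cref{hyperedges are cliques in co-occurrence graph}, so the maximality of the clique $K$ forces $K = e$; hence no hyperedge properly contains $K$, and $K$ is indeed a maximal hyperedge of $\HH$.

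For \ref{C-item3} $\Rightarrow$ \ref{C-item4}, I would simply exhibit the witness $G = G(\HH)$ and $\mathcal{C} = E(\HH)$. By \Cref{hyperedges are cliques in co-occurrence graph} every hyperedge is a clique of $G(\HH)$, so $\mathcal{C}$ is a family of cliques of $G$; assumption \ref{C-item3} says that every maximal clique of $G(\HH)$ is a (maximal) hyperedge of $\HH$, hence lies in $\mathcal{C}$, so $\mathcal{C}$ contains all maximal cliques of $G$, and $\HH = (V,\mathcal{C})$ holds by construction.

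For \ref{C-item4} $\Rightarrow$ \ref{C-item1}, given such $G$ and $\mathcal{C}$, I would first invoke \Cref{a-technical-lemma} to identify $G = G(\HH)$. Then every maximal clique of $G(\HH) = G$ belongs to $\mathcal{C} = E(\HH)$ by the hypothesis on $\mathcal{C}$, i.e.\ is a hyperedge of $\HH$; the recalled characterization then yields conformality. None of these steps presents a genuine obstacle; the only points requiring care are the ``maximal'' qualifier in \ref{C-item3}, which is exactly where \Cref{hyperedges are cliques in co-occurrence graph} enters, and the role of \Cref{a-technical-lemma} in ensuring that the graph $G$ appearing in \ref{C-item4} is forced to coincide with the co-occurrence graph rather than being an arbitrary graph with the listed cliques.
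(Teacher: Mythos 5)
Your proposal is correct and follows essentially the same route as the paper's own proof: the same cycle of implications \ref{C-item1}~$\Rightarrow$~\ref{C-item3}~$\Rightarrow$~\ref{C-item4}~$\Rightarrow$~\ref{C-item1}, using \Cref{hyperedges are cliques in co-occurrence graph} to upgrade maximal cliques to maximal hyperedges, the witness $G = G(\HH)$ with $\mathcal{C} = E(\HH)$ for the middle implication, and \Cref{a-technical-lemma} to force $G = G(\HH)$ in the last step. No gaps to report.
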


\begin{proof}
We show first that property~\ref{C-item1} implies property~\ref{C-item3}.
Suppose first that $\HH$ is conformal, that is, every maximal clique in $G(\HH)$ is a hyperedge of $\HH$.
Let $C$ be a maximal clique in $G(\HH)$.
Since $\HH$ is conformal, $C$ is a hyperedge of $\HH$.
It is in fact a maximal hyperedge, since if $C$ is properly contained in another hyperedge $e$ of $\HH$, then by \Cref{hyperedges are cliques in co-occurrence graph} we obtain that $e$ is a clique in $G(\HH)$ properly containing $C$, contrary to the assumption that $C$ is a maximal clique.
Thus, property~\ref{C-item3} holds.

Next, we show that property~\ref{C-item3} implies property~\ref{C-item4}. To this end, suppose that every maximal clique in $G(\HH)$ is a maximal hyperedge of $\HH$, and let $G = G(\HH)$ and $\mathcal{C} = E(\HH)$.
We then have $V(\HH) = V(G)$, by \Cref{hyperedges are cliques in co-occurrence graph} every member of $\mathcal{C}$ is a clique of $G$, and by property~\ref{C-item3}, every maximal clique in $G$ belongs to $\mathcal{C}$.
Thus, property~\ref{C-item4} holds for $G = G(\HH)$ and $\mathcal{C} = E(\HH)$.

We show next that property~\ref{C-item4} implies property~\ref{C-item1}. Suppose that there exists a graph $G = (V,E)$ and a collection $\mathcal{C}$ of cliques of $G$ containing all maximal cliques of $G$ (and possibly some others) such that $\HH = (V,\mathcal{C})$.
By \Cref{a-technical-lemma}, we have $G = G(\HH)$.
This implies that every maximal clique in $G(\HH) = G$ is a hyperedge of $\HH$, thus $\HH$ is conformal and property~\ref{C-item1} holds.
\end{proof}

Note that the proof of \Cref{characterization of conformality} shows that if $\HH = (V,\mathcal{C})$ for some graph $G = (V,E)$ and a collection $\mathcal{C}$ of cliques of $G$ containing all maximal cliques of $G$, then not only the collection $\mathcal{C}$ but also the graph $G$ is uniquely determined from $\HH$; namely, $G$ is the co-occurrence graph of $\HH$.
Checking conformality of a given hypergraph can be done in polynomial time, due to the following characterization.

\begin{theorem} [Gilmore~\cite{gilmore1962families}; see also~\cite{zbMATH03485854,zbMATH03400923,MR1013569}]
\label{conformal hypergraphs}
A hypergraph $\HH = (V,E)$ is conformal if and only if for every three hyperedges $e_1,e_2,e_3\in E$ there exists a hyperedge $e\in E$ such that $$(e_1\cap e_2)\cup (e_1\cap e_3)\cup (e_2\cap e_3)\subseteq e\,.$$
\end{theorem}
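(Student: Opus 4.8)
The plan is to prove both implications directly from the definition of conformality, with the harder (``if'') direction resting on an induction on $|U|$.

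For the forward direction, I would assume $\HH$ is conformal and fix three hyperedges $e_1,e_2,e_3\in E$. Set $U = (e_1\cap e_2)\cup(e_1\cap e_3)\cup(e_2\cap e_3)$. I would show that every pair of vertices in $U$ lies in a common hyperedge, so that conformality immediately yields a hyperedge $e\supseteq U$, which is exactly the desired conclusion. The key observation is that each vertex of $U$, lying in some pairwise intersection, belongs to at least two of the three edges $e_1,e_2,e_3$. Hence for any two vertices $u,v\in U$, writing $A_u=\{i : u\in e_i\}$ and $A_v=\{i : v\in e_i\}$, we have $|A_u|,|A_v|\ge 2$, so $|A_u\cap A_v|\ge |A_u|+|A_v|-3\ge 1$; thus $u$ and $v$ share a common edge. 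This pigeonhole step is the entire content of the direction.

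For the converse, I would assume the stated closure condition and prove conformality by induction on $|U|$, where $U$ is a set of vertices each pair of which lies in some hyperedge. The base cases $|U|\le 2$ are immediate: singletons are covered because every vertex belongs to a hyperedge, and pairs are covered by hypothesis. For the inductive step with $|U|\ge 3$, I would pick three distinct vertices $v_1,v_2,v_3\in U$. Each of the three sets $U\setminus\{v_1\}$, $U\setminus\{v_2\}$, $U\setminus\{v_3\}$ has size $|U|-1\ge 2$ and inherits the pairwise-coverage property, so by the induction hypothesis each is contained in a hyperedge; call these $e_1,e_2,e_3$, respectively.

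The crux is then a set-theoretic identity. I would verify that
\[
(e_1\cap e_2)\cup(e_1\cap e_3)\cup(e_2\cap e_3)\ \supseteq\ U,
\]
using $e_i\cap e_j\supseteq (U\setminus\{v_i\})\cap(U\setminus\{v_j\}) = U\setminus\{v_i,v_j\}$ and checking that each vertex of $U$ survives at least one of these three removals (indeed $v_1$ lies in $U\setminus\{v_2,v_3\}$, and symmetrically for $v_2$ and $v_3$, while every other vertex of $U$ lies in all three). Applying the closure hypothesis to $e_1,e_2,e_3$ then produces a hyperedge $e$ containing $(e_1\cap e_2)\cup(e_1\cap e_3)\cup(e_2\cap e_3)$, and hence $U\subseteq e$, completing the induction. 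I expect the main obstacle to be precisely this verification that the union of pairwise intersections covers all of $U$; once that identity is in place, the induction closes with no further difficulty.
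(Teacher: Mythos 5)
The paper itself does not prove this theorem---it is quoted as a classical result of Gilmore (see also Zykov and Berge) with citations only---so there is no internal proof to compare against; your argument stands or falls on its own, and it stands. The forward direction's pigeonhole step is correct: every vertex of $U=(e_1\cap e_2)\cup(e_1\cap e_3)\cup(e_2\cap e_3)$ lies in at least two of $e_1,e_2,e_3$, so for any two vertices $u,v\in U$ the index sets $A_u,A_v\subseteq\{1,2,3\}$ satisfy $|A_u\cap A_v|\ge 1$, every pair of $U$ is covered, and conformality applies. The converse induction also closes as you describe: with distinct $v_1,v_2,v_3\in U$ and hyperedges $e_i\supseteq U\setminus\{v_i\}$ supplied by the induction hypothesis, one has $e_i\cap e_j\supseteq U\setminus\{v_i,v_j\}$, and the three sets $U\setminus\{v_i,v_j\}$ jointly cover $U$ (each $v_k$ survives the removal pair not involving it, and every other vertex survives all three), so the closure hypothesis applied to $e_1,e_2,e_3$ produces a hyperedge containing $U$. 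Two small points deserve one sentence each in a polished write-up, though neither is a gap: the base case $|U|\le 1$ relies on the paper's convention that every vertex belongs to some hyperedge (and for $U=\emptyset$ on $E\neq\emptyset$, which follows since $V\neq\emptyset$); and the sets $U\setminus\{v_i\}$ inherit the pairwise-coverage property simply because they are subsets of $U$. This is essentially the standard textbook proof of Gilmore's criterion, so your route is the expected one even though the paper omits it.
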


\begin{proposition}
Given a hypergraph $\mathcal{H} = (V,E)$ with dimension $k$, it can be tested in time $\mathcal{O}(|V||E|+k|E|^4)$ if $\HH$ is conformal.
\end{proposition}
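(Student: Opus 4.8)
The plan is to use the characterization of conformality from \Cref{conformal hypergraphs}, which reduces the test to checking, for every triple of hyperedges $e_1, e_2, e_3 \in E$, whether the set $(e_1 \cap e_2) \cup (e_1 \cap e_3) \cup (e_2 \cap e_3)$ is contained in some hyperedge $e \in E$. The hypergraph is conformal if and only if this containment holds for all such triples, so the algorithm iterates over all $\binom{|E|}{3} = \mathcal{O}(|E|^3)$ triples and, for each, searches for a witnessing hyperedge $e$. The dominant cost will clearly come from this iteration, and the main design choice is how to test each triple efficiently using the data structures guaranteed by \Cref{data-structure-for-hypergraphs}.

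First I would run the preprocessing from \Cref{data-structure-for-hypergraphs} to build the edge-vertex incidence matrix and the doubly linked representation of incident pairs in time $\mathcal{O}(|V||E|)$; this accounts for the $\mathcal{O}(|V||E|)$ term in the claimed bound. Then, for a fixed triple $(e_1, e_2, e_3)$, I would compute the ``majority set'' $M = (e_1 \cap e_2) \cup (e_1 \cap e_3) \cup (e_2 \cap e_3)$. Using the union, intersection, and set-difference operations from \Cref{data-structure-for-hypergraphs}, each of which runs in time $\mathcal{O}(\dim(\cH)) = \mathcal{O}(k)$, the set $M$ can be assembled in time $\mathcal{O}(k)$, and it has size $|M| \le k$. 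It remains to test whether $M \subseteq e$ for some hyperedge $e \in E$. The naive approach tests each of the $\mathcal{O}(|E|)$ hyperedges for containment of $M$; since each containment test $M \subseteq e$ takes $\mathcal{O}(k)$ time (scanning $M$ and checking membership in $e$ via the incidence matrix, or using the subset test from \Cref{data-structure-for-hypergraphs}), this costs $\mathcal{O}(k|E|)$ per triple. Multiplying by the $\mathcal{O}(|E|^3)$ triples yields $\mathcal{O}(k|E|^4)$, matching the stated bound.

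The subtle point — and the part that needs a little care rather than raw calculation — is ensuring each individual step genuinely fits the advertised budget. In particular, to build $M$ one must be careful that the intermediate sets $e_1 \cap e_2$, etc., and their union are each handled with the $\mathcal{O}(k)$ set operations rather than, say, an $\mathcal{O}(|V|)$ scan; since all these sets have size at most $k$, the bound holds. One also wants the containment test to avoid an $\mathcal{O}(|e|)$ factor that could be as large as $k$ on top of the per-hyperedge work, but this is exactly the $\mathcal{O}(k)$ subset test provided. I would also note that if instead of scanning all $|E|$ hyperedges per triple one uses a marking array indexed by $V$ (initialized once, as in \textsc{Procedure SubTransversal}), the per-triple containment search can still be organized within $\mathcal{O}(k|E|)$, so no hidden overhead appears.

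Putting the two phases together, the total running time is $\mathcal{O}(|V||E|) + \mathcal{O}(|E|^3) \cdot \mathcal{O}(k|E|) = \mathcal{O}(|V||E| + k|E|^4)$, which is the claimed bound. Correctness is immediate from \Cref{conformal hypergraphs}: the algorithm outputs that $\HH$ is conformal precisely when every triple admits a witnessing hyperedge, which is exactly the stated necessary and sufficient condition. The only genuine obstacle is bookkeeping the per-triple work so that the $\mathcal{O}(k)$ set operations and containment tests do not secretly blow up, but the data structure from \Cref{data-structure-for-hypergraphs} supplies exactly the operations needed at exactly the required cost.
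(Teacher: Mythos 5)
Your proposal is correct and follows essentially the same route as the paper: preprocess via \Cref{data-structure-for-hypergraphs} in $\mathcal{O}(|V||E|)$ time, then apply \Cref{conformal hypergraphs} by iterating over all $\mathcal{O}(|E|^3)$ triples, computing $(e_1\cap e_2)\cup(e_1\cap e_3)\cup(e_2\cap e_3)$ in $\mathcal{O}(k)$ time, and testing containment against all $|E|$ hyperedges at $\mathcal{O}(k)$ each, for the total $\mathcal{O}(|V||E|+k|E|^4)$. The paper's proof is exactly this argument, so no further comparison is needed.
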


\begin{proof}
Using \Cref{data-structure-for-hypergraphs}, we compute in time $\mathcal{O}(|V||E|)$ the edge-vertex incidence matrix of $\cH$ and the doubly linked representation of its incident pairs.
We then check the conformality of $\HH$ by verifying the condition from~\Cref{conformal hypergraphs}.
This can be done by iterating over all $\mathcal{O}(|E|^3)$ triples $\{e_1,e_2,e_3\}$ of hyperedges, and for each such triple computing in time $\mathcal{O}(k)$ the set $S = (e_1\cap e_2)\cup (e_1\cap e_3)\cup (e_2\cap e_3)$, and iterating over all edges $e\in E$ to verify the inclusion $S\subseteq e$.
The overall running time of this procedure is $\mathcal{O}(|E|^3\cdot(k+|E|\cdot k)) =\mathcal{O}(k|E|^4)$.
\end{proof}

\section{Dually conformal hypergraphs}\label{sec:dually-conformal}

\begin{sloppypar}
We say that a hypergraph $\HH$ is \emph{dually conformal} if its dual hypergraph $\HH^d$ is conformal.
In this section we present some basic observations about dually conformal hypergraphs and initiate a study of the corresponding recognition problem.
While we do not settle the computational complexity status of the problem, we show that the problem is in {\sf co-NP} and develop a polynomial-time algorithm for a special case.
\end{sloppypar}

\subsection{Basic observations}

Since the dual hypergraph of any hypergraph $\HH$ is the same as the dual hypergraph of the hypergraph obtained from $\HH$ by keeping only the inclusion-minimal hyperedges, in order to test dual conformality of a hypergraph we can assume without loss of generality that the hypergraph is Sperner.

In the next proposition, we characterize the dually conformal Sperner hypergraphs using a connection with graphs.
Given a graph $G$, a set of vertices that intersects all maximal cliques of $G$ is called a \emph{clique transversal} in $G$.
A clique transversal in $G$ is \emph{minimal} if it does not contain any other clique transversal.

\begin{proposition}\label{lem:clique-transversals-characterization}
Let $\HH$ be a hypergraph.
Then the following statements are equivalent.
\begin{enumerate}
\item $\HH$ is a dually conformal Sperner hypergraph.
\item There exists a graph $G$ such that $\HH$ is the hypergraph of all minimal clique transversals of~$G$.
\end{enumerate}
\end{proposition}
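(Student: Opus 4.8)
The plan is to prove both implications directly, leveraging the duality involution from \Cref{duality-is-an-involution} together with the characterization of Sperner conformal hypergraphs in \Cref{Sperner-conformal}. The key observation connecting the two worlds is that for a Sperner hypergraph $\HH$, the hyperedges of its dual $\HH^d$ are exactly the minimal transversals of $\HH$, and transversals of a clique hypergraph are precisely clique transversals of the underlying graph.

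First I would prove that statement~(1) implies statement~(2). Suppose $\HH$ is a dually conformal Sperner hypergraph, so $\HH^d$ is conformal. Since $\HH^d$ is the dual of a hypergraph it is always Sperner, and being also conformal, \Cref{Sperner-conformal} tells us that $\HH^d$ is the clique hypergraph of some graph $G$; concretely, one may take $G = G(\HH^d)$, the co-occurrence graph, and then the hyperedges of $\HH^d$ are exactly the maximal cliques of $G$. Now I use the involution: since $\HH$ is Sperner, $(\HH^d)^d = \HH$. The hyperedges of $(\HH^d)^d$ are the minimal transversals of $\HH^d$, and since the hyperedges of $\HH^d$ are the maximal cliques of $G$, the minimal transversals of $\HH^d$ are precisely the minimal clique transversals of $G$. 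Hence $\HH = (\HH^d)^d$ is exactly the hypergraph of all minimal clique transversals of $G$, giving statement~(2).

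For the converse, suppose there is a graph $G$ whose hypergraph of minimal clique transversals is $\HH$. Let $\mathcal{K}$ denote the clique hypergraph of $G$, which is Sperner (maximal cliques are incomparable) and conformal by \Cref{Sperner-conformal}. By definition, the minimal clique transversals of $G$ are exactly the minimal transversals of $\mathcal{K}$, so $\HH = \mathcal{K}^d$. As a dual hypergraph, $\HH$ is automatically Sperner. It remains to show $\HH$ is dually conformal, i.e.\ that $\HH^d = (\mathcal{K}^d)^d$ is conformal; but since $\mathcal{K}$ is Sperner, the involution gives $(\mathcal{K}^d)^d = \mathcal{K}$, which is conformal. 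Thus $\HH^d = \mathcal{K}$ is conformal and $\HH$ is a dually conformal Sperner hypergraph, establishing statement~(1).

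The main subtlety — the step I would be most careful about — is the bookkeeping of exactly which hypergraphs are Sperner at each stage, so that each application of the involution $(\cdot^d)^d = \mathrm{id}$ from \Cref{duality-is-an-involution} is legitimate. Both directions hinge on correctly identifying $\HH$ with either $(\HH^d)^d$ or $\mathcal{K}^d$, and these identifications are valid only because of the Sperner hypotheses on $\HH$ and on $\mathcal{K}$ respectively. The rest is a matter of unwinding the definitions, noting throughout that a set of vertices is a transversal of the clique hypergraph of $G$ if and only if it meets every maximal clique of $G$, i.e.\ is a clique transversal of $G$, and that minimality transfers verbatim between the two formulations.
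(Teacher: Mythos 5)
Your proof is correct and follows essentially the same route as the paper: both directions use \Cref{Sperner-conformal} to identify $\HH^d$ with the clique hypergraph of a graph and \Cref{duality-is-an-involution} to transfer between $\HH$ and the minimal clique transversals. You merely make explicit (via the auxiliary hypergraph $\mathcal{K}$ and the Sperner bookkeeping) steps that the paper's converse direction leaves implicit, which is fine.
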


\begin{proof}
Let $\HH$ be a dually conformal Sperner hypergraph.
Let $G$ be the co-occurrence graph of $\HH^d$.
Since $\HH^d$ is a conformal Sperner hypergraph, \Cref{Sperner-conformal} implies that $\HH^d$ is the clique hypergraph of $G$.
But then $\HH = (\HH^d)^d$ is exactly the hypergraph of all minimal clique transversals of~$G$.

Conversely, let $G$ be a graph and let $\HH$ be the hypergraph of all minimal clique transversals of $G$.
By construction, $\HH$ is a Sperner hypergraph.
Then $\HH^d$ is the clique hypergraph of $G$ and thus $\HH^d$ is conformal.
\end{proof}

The following characterization of dually conformal hypergraphs follows immediately from the definition.

\begin{observation}\label{prop:yes-instances-of-TC}
For every hypergraph $\HH$, the following properties are equivalent.
\begin{enumerate}
\item $\HH$ is dually conformal.
\item Every maximal clique in $G(\HH^d)$ is a minimal transversal of $\HH$.
\end{enumerate}
\end{observation}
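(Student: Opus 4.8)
The plan is to derive the equivalence by applying the general characterization of conformality to the dual hypergraph $\HH^d$ and then simplifying via the fact that $\HH^d$ is always Sperner. The whole argument is a short chain of equivalences, so I expect no serious obstacle: the substantive content is already packaged in \Cref{characterization of conformality}, and the proof amounts to a careful bookkeeping of definitions.

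First I would unfold the definition: by definition, $\HH$ is dually conformal exactly when $\HH^d$ is conformal. Next I would invoke \Cref{characterization of conformality}, applied not to $\HH$ but to the hypergraph $\HH^d$. This yields that $\HH^d$ is conformal if and only if every maximal clique in the co-occurrence graph $G(\HH^d)$ is a \emph{maximal} hyperedge of $\HH^d$.

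The final step is to rewrite ``maximal hyperedge of $\HH^d$'' in the language of transversals. Recall that the dual of any hypergraph is always Sperner, so every hyperedge of $\HH^d$ is automatically maximal; hence a maximal hyperedge of $\HH^d$ is simply a hyperedge of $\HH^d$, which by the very definition of the dual hypergraph is precisely a minimal transversal of $\HH$. Chaining the three equivalences then gives that $\HH$ is dually conformal if and only if every maximal clique in $G(\HH^d)$ is a minimal transversal of $\HH$, as claimed. The only point requiring any care is this last reduction from ``maximal hyperedge'' to ``minimal transversal,'' which hinges on the Sperner property of $\HH^d$; everything else is immediate from the cited characterization.
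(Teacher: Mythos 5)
Your proof is correct and follows essentially the route the paper intends: the paper states this as an observation that ``follows immediately from the definition,'' namely the chain $\HH$ dually conformal $\iff$ $\HH^d$ conformal $\iff$ every maximal clique of $G(\HH^d)$ is a hyperedge of $\HH^d$ $\iff$ every maximal clique of $G(\HH^d)$ is a minimal transversal of $\HH$. Your small detour through \Cref{characterization of conformality} (getting ``maximal hyperedge'') and then invoking the Sperner property of $\HH^d$ to drop the word ``maximal'' is a valid and careful way of making that implicit chain explicit.
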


Fix a hypergraph $\HH$ and let $G = G(\HH^d)$.
By \Cref{prop:yes-instances-of-TC}, a necessary and sufficient condition for $\HH$ to be dually conformal is that every maximal clique of $G$ is a minimal transversal of $\HH$.
Thus, in general, there are two possible reasons why $\HH$ could fail to be dually conformal.

\begin{corollary}\label{no instances characterization 1}
Let $\HH$ be a hypergraph and let $G = G(\HH^d)$.
Then $\HH$ is not dually conformal if and only if one of the following two conditions holds.
\begin{enumerate}[(a)]
\item $G$ contains a maximal clique $C$ that is not a transversal of $\HH$, or
\item $G$ contains a maximal clique $C$ that is a transversal of  $\HH$ but not a minimal one.
\end{enumerate}
\end{corollary}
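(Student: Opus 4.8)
The plan is to derive this corollary directly from \Cref{prop:yes-instances-of-TC} by contraposition followed by a clean case split. That observation states that $\HH$ is dually conformal if and only if every maximal clique of $G = G(\HH^d)$ is a minimal transversal of $\HH$. Negating this biconditional, $\HH$ fails to be dually conformal precisely when there exists a maximal clique $C$ of $G$ that is \emph{not} a minimal transversal of $\HH$. This reduces the corollary to showing that ``$C$ is not a minimal transversal'' is equivalent to the disjunction of conditions (a) and (b).

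The only remaining work is to unfold the phrase ``$C$ is not a minimal transversal of $\HH$'' into the two listed alternatives. Recall that a minimal transversal is a transversal that does not properly contain any transversal. Hence $C$ fails to be a minimal transversal for exactly one of two reasons: either $C$ is not a transversal at all, which is condition (a); or $C$ is a transversal but violates the minimality requirement because it properly contains some transversal, which is condition (b). These two cases are mutually exclusive and together exhaust all the ways a set can fail to be a minimal transversal, so their disjunction is logically equivalent to ``$C$ is not a minimal transversal.'' Combining this equivalence with the contrapositive from the previous paragraph yields the claimed characterization.

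Because the argument is a purely logical unfolding of the definitions layered on top of \Cref{prop:yes-instances-of-TC}, I do not anticipate any genuine obstacle. The one point deserving a moment of care is the handling of the quantifier: ``not every maximal clique is a minimal transversal'' must be read as ``there exists a maximal clique that is not a minimal transversal,'' and the case split (a)/(b) should then be applied to that single witnessing clique $C$, not globally across all maximal cliques of $G$. Once this is made explicit, the proof occupies only a few lines.
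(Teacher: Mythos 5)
Your proposal is correct and matches the paper's own reasoning: the paper also derives \Cref{no instances characterization 1} directly from \Cref{prop:yes-instances-of-TC} by negating the biconditional and splitting ``not a minimal transversal'' into the two exhaustive cases (not a transversal at all, or a transversal that is not minimal). Your explicit remark about placing the case split on a single witnessing clique rather than globally is a sound clarification of what the paper leaves implicit.
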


As shown by the following two examples, the two conditions are independent of each other.

\begin{example}\label{example-a-not-b}
The following hypergraph satisfies condition (a) but not condition (b).
Let $\HH$ be the hypergraph with vertex set $\{1,\ldots, 6\} $ and hyperedges $\{1,2\}$, $\{1,3\}$, $\{2,3\}$, $\{1,4\}$, $\{2,5\}$, $\{3,6\}$, and $\{4,5,6\}$.
Then the hyperedges of $\HH^d$ are $\{1,2,6\}$, $\{1,3,5\}$, and $\{2,3,4\}$. Its co-occurrence graph $G = G(\HH^d)$ is shown in Fig.~\ref{fig:example-1}.

\begin{figure}[h!]
	\centering
		\includegraphics[width=0.75\textwidth]{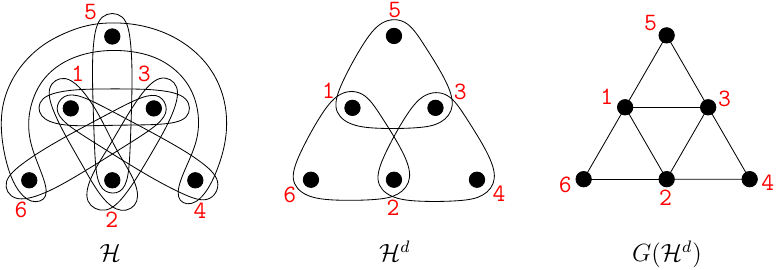}
	\caption{A hypergraph $\HH$, its dual hypergraph $\HH^d$, and the co-occurrence graph of $\HH^d$.}
	\label{fig:example-1}
\end{figure}

Note that $C = \{1,2,3\}$ is a maximal clique in $G$ that is not a transversal of $\HH$, since it misses the hyperedge $\{4,5,6\}$.
Thus, $\HH$ satisfies condition (a).
On the other hand, all maximal cliques in $G$ other than $C$ are minimal transversals of $\HH$, and hence $\HH$ does not satisfy condition (b).
\end{example}

\begin{sloppypar}
\begin{example}
The following hypergraph satisfies condition (b) but not condition (a).
Let $G$ be the complete graph $K_3$ and let $\HH = G$,
that is, $V(\HH) = \{1,2,3\}$ and $E(\HH) = \{\{1,2\},\{1,3\},\{2,3\}\}$.
Then $\HH^d = \HH$ and $G(\HH^d)= G$.
Graph $G$ is complete and hence contains a unique maximal clique $C$, namely $C = \{1,2,3\}$.
This clique is a transversal of $\HH$ but not a minimal one. Thus, $\HH$ satisfies condition (b) but not condition (a).
\end{example}
\end{sloppypar}

Furthermore, as shown by the following example, the two conditions can occur simultaneously.

\begin{sloppypar}
\begin{example}
The following hypergraph satisfies both conditions (a) and (b).
Let $\HH$ have vertex set $\{1,\ldots, 6\} $ and hyperedges $\{1,4,5\}$, $\{1,4,6\}$, $\{2,4,5\}$, $\{2,5,6\}$, $\{3,4,6\}$, $\{3,5,6\}$, and $\{4,5,6\}$.
Then the hyperedges of $\HH^d$ are $\{1,2,6\}$, $\{1,3,5\}$, $\{2,3,4\}$, $\{4,5\}$, $\{4,6\}$, and $\{5,6\}$.
Its co-occurrence graph $G = G(\HH^d)$ is isomorphic to the complete multipartite graph $K_{2,2,2}$, with parts $\{1,4\}$, $\{2,5\}$, and $\{3,6\}$; two vertices in $G$ are adjacent to each other if and only if they belong to different parts.

Note that the set $C = \{1,2,3\}$ is a maximal clique in $G$ that is not a transversal of $\HH$, since it misses the hyperedge $\{4,5,6\}$.
Thus, $\HH$ satisfies condition (a).
Furthermore, $C' = \{4,5,6\}$ is a maximal clique in $G$ that is a transversal of $\HH$ but not a minimal one, since it properly contains the minimal transversal $\{4,5\}$. Hence, $\HH$ also satisfies condition~(b).
\end{example}
\end{sloppypar}

\subsection{Computing the co-occurrence graph of the dual hypergraph}

Immediately from \Cref{hyperedges are cliques in co-occurrence graph} we obtain the following.

\begin{corollary}\label{hyperedges are cliques in co-occurrence graph-corollary}
Every hyperedge of $\HH^d$ is a clique in $G(\HH^d)$.
\end{corollary}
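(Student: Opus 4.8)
The statement to prove is Corollary~\ref{hyperedges are cliques in co-occurrence graph-corollary}: every hyperedge of $\HH^d$ is a clique in $G(\HH^d)$.

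The plan is to apply Observation~\ref{hyperedges are cliques in co-occurrence graph} directly. That observation asserts that for \emph{every} hypergraph, every one of its hyperedges forms a clique in its own co-occurrence graph. The corollary is simply the instantiation of this general fact at the particular hypergraph $\HH^d$ in place of $\HH$.

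Concretely, I would first recall that $\HH^d$ is itself a hypergraph (its vertex set is $V(\HH)$ and its hyperedges are the minimal transversals of $\HH$). Since Observation~\ref{hyperedges are cliques in co-occurrence graph} holds for an arbitrary hypergraph, I apply it with the hypergraph taken to be $\HH^d$. The co-occurrence graph of $\HH^d$ is exactly $G(\HH^d)$, so the observation yields that every hyperedge of $\HH^d$ is a clique in $G(\HH^d)$, which is precisely the claim.

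There is essentially no obstacle here: the only thing to verify is that $\HH^d$ genuinely is a hypergraph in the sense of the paper's definition, so that the earlier observation is applicable. This is immediate, since the minimal transversals of $\HH$ are finite subsets of the finite vertex set $V(\HH)$ and every vertex participating in $G(\HH^d)$ lies in some minimal transversal. The proof is therefore a one-line specialization, exactly as the phrase ``immediately from'' preceding the corollary suggests.

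\begin{proof}
The hypergraph $\HH^d$ is, in particular, a hypergraph, so we may apply Observation~\ref{hyperedges are cliques in co-occurrence graph} with $\HH^d$ in the role of $\HH$. The co-occurrence graph of $\HH^d$ is $G(\HH^d)$, and hence every hyperedge of $\HH^d$ is a clique in $G(\HH^d)$.
\end{proof}
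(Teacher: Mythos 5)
Your proof is correct and matches the paper's own argument exactly: the corollary is stated there as following ``immediately'' from Observation~\ref{hyperedges are cliques in co-occurrence graph}, which is precisely your one-line specialization with $\HH^d$ in the role of $\HH$. Nothing further is needed.
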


\begin{proposition}\label{dual-co-occurrence-graph-in-poly-time-improved}
Given a hypergraph $\mathcal{H} = (V,E)$ with dimension $k$ and maximum degree~$\gD$, the co-occurrence graph of the dual hypergraph $\HH^d$ can be computed in time $\mathcal{O}(k|E|\Delta^2|V|^{2})$.
\end{proposition}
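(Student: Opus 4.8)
The plan is to reduce the computation to $\mathcal{O}(|V|^2)$ subtransversal tests, each on a two-element set. The key observation is that two distinct vertices $u,v\in V$ are adjacent in $G(\HH^d)$ if and only if there is a hyperedge of $\HH^d$ — that is, a minimal transversal of $\HH$ — containing both $u$ and $v$; equivalently, if and only if the set $S=\{u,v\}$ is a subtransversal of $\HH$. Thus the edges of $G(\HH^d)$ are determined entirely by running the subtransversal test of \Cref{subtransversal-running-time} on all pairs, and no knowledge of the (possibly exponentially many) hyperedges of $\HH^d$ themselves is required.

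Concretely, I would first invoke \Cref{data-structure-for-hypergraphs} to build, once and for all in time $\mathcal{O}(|V||E|)$, the edge-vertex incidence matrix of $\HH$ together with the doubly linked representation of its incident pairs. I would then iterate over all $\binom{|V|}{2}=\mathcal{O}(|V|^2)$ unordered pairs $\{u,v\}$ of distinct vertices, and for each pair run \textsc{Procedure SubTransversal} (from the proof of \Cref{subtransversal-running-time}) with $S=\{u,v\}$, placing the edge $uv$ into $G(\HH^d)$ exactly when the answer is \textsc{Yes}. By \Cref{subtransversal-running-time}, since $|S|=2=\mathcal{O}(1)$, each such test runs in time $\mathcal{O}(k|E|\Delta^{2})$. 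Summing over the $\mathcal{O}(|V|^2)$ pairs yields a total of $\mathcal{O}(k|E|\Delta^{2}|V|^{2})$, which dominates the $\mathcal{O}(|V||E|)$ preprocessing and matches the claimed bound.

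There is no genuinely hard step here: the entire content is the identification of adjacency in $G(\HH^d)$ with subtransversality of pairs, after which the statement is a direct application of the two preceding results. The only points requiring mild care are to construct the data structure a single time \emph{before} the loop rather than once per pair, and to note that the vertex set of $G(\HH^d)$ is simply $V$, so that isolated vertices (those not lying together with any other vertex in a common minimal transversal) require no special handling.
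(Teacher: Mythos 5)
Your proposal is correct and follows essentially the same route as the paper's own proof: build the incidence data structure once via \Cref{data-structure-for-hypergraphs}, observe that $u$ and $v$ are adjacent in $G(\HH^d)$ exactly when $\{u,v\}$ is a subtransversal of $\HH$, and apply \Cref{subtransversal-running-time} to each of the $\mathcal{O}(|V|^2)$ pairs at cost $\mathcal{O}(k|E|\Delta^2)$ per test. Your additional remarks (performing preprocessing once outside the loop, and the vertex set being $V$) are consistent with, and slightly more explicit than, the paper's argument.
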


\begin{proof}
Using \Cref{data-structure-for-hypergraphs}, we compute in time $\mathcal{O}(|V||E|)$ the edge-vertex incidence matrix of $\cH$ and the doubly linked representation of its incident pairs.
Two distinct vertices $u$ and $v$ in $V$ are adjacent in the co-occurrence graph of $(\HH)^d$ if and only if the set $\{u,v\}$ is a subtransversal of $\HH$.
Applying \Cref{subtransversal-running-time} we can test in time $\mathcal{O}(k|E|\Delta^{2})$ if any such set is a subtransversal of $\HH$.
As the total number of pairs is $\mathcal{O}(|V|^2)$, the claimed time complexity follows.
\end{proof}

\begin{corollary}\label{dual-co-occurrence-graph-in-poly-time}
Given a hypergraph $\mathcal{H} = (V,E)$, the co-occurrence graph of the dual hypergraph $\HH^d$ can be computed in time $\mathcal{O}(|V|^3|E|^3)$.
\end{corollary}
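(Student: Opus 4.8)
The plan is to derive this bound as an immediate consequence of \Cref{dual-co-occurrence-graph-in-poly-time-improved}, which already gives the sharper parameterized running time $\mathcal{O}(k|E|\gD^2|V|^2)$, where $k = \dim(\cH)$ and $\gD = \gD(\cH)$. Since that proposition holds for an arbitrary hypergraph, all that remains is to bound the two structural parameters $k$ and $\gD$ by the crude quantities $|V|$ and $|E|$ and substitute.

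Concretely, the first (and essentially only) step is to observe the two trivial inequalities $k \leq |V|$ and $\gD \leq |E|$. The former holds because every hyperedge is a subset of $V$, so $\dim(\cH) = \max_{e \in E} |e| \leq |V|$; the latter holds because the degree $\deg(v)$ counts hyperedges containing $v$, hence $\deg(v) \leq |E|$ for every $v$, giving $\gD(\cH) \leq |E|$. Substituting these into the bound of \Cref{dual-co-occurrence-graph-in-poly-time-improved} yields
\[
\mathcal{O}(k|E|\gD^2|V|^2) \;=\; \mathcal{O}\bigl(|V|\cdot|E|\cdot|E|^2\cdot|V|^2\bigr) \;=\; \mathcal{O}(|V|^3|E|^3),
\]
which is exactly the claimed running time.

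There is no genuine obstacle here: the corollary is a direct worst-case specialization of the refined proposition, and the substitution is routine. The only point worth stating cleanly is the justification of the two parameter bounds so that the chain of inequalities is self-contained; no additional algorithmic idea or auxiliary lemma is needed beyond what \Cref{dual-co-occurrence-graph-in-poly-time-improved} already provides.
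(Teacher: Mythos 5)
Your proposal is correct and matches the paper's own proof exactly: the paper also derives the corollary from \Cref{dual-co-occurrence-graph-in-poly-time-improved} by bounding the dimension by $|V|$ and the maximum degree by $|E|$. Your write-up merely spells out the (trivial) justification of these two parameter bounds, which the paper states without elaboration.
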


\begin{proof}
Immediate from  \Cref{dual-co-occurrence-graph-in-poly-time-improved} using the fact that the dimension and the maximum degree of $\mathcal{H}$ are bounded by $k \le |V|$ and $\Delta\le |E|$, respectively.
\end{proof}

\subsection{The {\sc Dual Conformality} problem}

We are interested in the complexity of testing conformality for the dual hypergraph of a given hypergraph $\HH$.
Formally, we introduce the following problem.

\begin{center}
\fbox{\parbox{0.85\linewidth}{\noindent
\textsc{Dual Conformality}\\[.8ex]
\begin{tabular*}{.93\textwidth}{rl}
{\em Input:} & A hypergraph $\HH$.\\
{\em Question:} & Is the dual hypergraph $\HH^d$ conformal?
\end{tabular*}
}}
\end{center}

\medskip

\Cref{prop:yes-instances-of-TC} has the following algorithmic consequence.

\begin{sloppypar}
\begin{proposition}\label{prop:maximal-cliques}
Given a hypergraph $\HH = (V,E)$ with dimension $k$ and maximum degree~$\gD$,
the {\sc Dual Conformality} problem is solvable
in time $\mathcal{O}(|V|^2(k|E|\Delta^2 + (|V|+|E|)\cdot|E(\HH^d)|))$.
\end{proposition}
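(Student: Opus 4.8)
The plan is to exploit the characterization in \Cref{prop:yes-instances-of-TC}: $\HH$ is dually conformal if and only if every maximal clique of the graph $G = G(\HH^d)$ is a minimal transversal of $\HH$. So the algorithm has two phases. First, I would compute $G = G(\HH^d)$; by \Cref{dual-co-occurrence-graph-in-poly-time-improved} this takes time $\mathcal{O}(k|E|\Delta^2|V|^2)$, which accounts for the first term of the claimed bound. Second, I would enumerate the maximal cliques of $G$ one at a time and test each for being a minimal transversal of $\HH$, halting and returning \textsc{No} as soon as some maximal clique fails the test (this is legitimate by \Cref{no instances characterization 1}), and returning \textsc{Yes} if the enumeration completes without any failure.

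For the enumeration I would invoke a classical polynomial-delay algorithm for listing all maximal cliques of a graph; since $G$ has $|V|$ vertices and $\mathcal{O}(|V|^2)$ edges, each successive maximal clique is produced within delay $\mathcal{O}(|V|^3)$. For a fixed maximal clique $C$, I would first test whether $C$ is a transversal of $\HH$ (mark the vertices of $C$ and scan every hyperedge), in time $\mathcal{O}(|V||E|)$, and then test minimality by checking, for each $v\in C$, that $C\setminus\{v\}$ is no longer a transversal; this costs $\mathcal{O}(|V||E|)$ per vertex and hence $\mathcal{O}(|V|^2|E|)$ in total. Thus each processed maximal clique costs $\mathcal{O}(|V|^3 + |V|^2|E|) = \mathcal{O}(|V|^2(|V|+|E|))$, combining the enumeration delay and the verification.

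The crux of the running-time analysis — and the step I expect to be the main obstacle — is bounding the number of maximal cliques the algorithm actually processes, since a graph on $|V|$ vertices can have exponentially many maximal cliques and we cannot afford to list them all. The key observation is that every maximal clique passing the test is a minimal transversal of $\HH$, i.e.\ a hyperedge of $\HH^d$, and distinct maximal cliques are distinct vertex sets; hence at most $|E(\HH^d)|$ maximal cliques can ever pass. Because the algorithm halts at the very first maximal clique that fails, it processes at most $|E(\HH^d)|+1$ maximal cliques in total, whether the answer is \textsc{Yes} or \textsc{No}. Early termination is valid with a polynomial-delay enumerator: after a preprocessing dominated by the other costs, interrupting the enumerator once $t$ cliques have been consumed costs only $\mathcal{O}(t\cdot\text{delay})$.

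Multiplying the per-clique cost $\mathcal{O}(|V|^2(|V|+|E|))$ by the $\mathcal{O}(|E(\HH^d)|)$ processed cliques and adding the cost of building $G$ yields the total $\mathcal{O}\!\left(|V|^2 k|E|\Delta^2 + |V|^2(|V|+|E|)\,|E(\HH^d)|\right) = \mathcal{O}\!\left(|V|^2\bigl(k|E|\Delta^2 + (|V|+|E|)\cdot|E(\HH^d)|\bigr)\right)$, as claimed. The two delicate points to verify carefully are the correctness of early termination, which is supplied by \Cref{no instances characterization 1}, and the counting argument that caps the number of ``good'' cliques by $|E(\HH^d)|$ via their identification with distinct hyperedges of $\HH^d$.
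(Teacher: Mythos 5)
Your proposal is correct and takes essentially the same route as the paper's own proof: compute $G(\HH^d)$ via \Cref{dual-co-occurrence-graph-in-poly-time-improved}, enumerate the maximal cliques of $G(\HH^d)$ with a polynomial-delay algorithm (the paper uses Tsukiyama et al.\ with delay $\mathcal{O}(|V|^3)$), test each clique for being a minimal transversal of $\HH$ in time $\mathcal{O}(|V|^2|E|)$, and stop at the first failure. Your explicit counting step---each clique that passes is a distinct hyperedge of $\HH^d$, so at most $|E(\HH^d)|+1$ cliques are ever processed---is exactly the (somewhat implicit) bound the paper relies on to charge the enumeration and verification cost to $|E(\HH^d)|$.
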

\end{sloppypar}

\begin{proof}
First, we compute the co-occurrence graph $G=G(\HH^d)$ of $\HH^d$.
By \Cref{dual-co-occurrence-graph-in-poly-time-improved}, this can be done in time $\mathcal{O}(k|E|\Delta^2|V|^{2})$.
By \Cref{hyperedges are cliques in co-occurrence graph-corollary}, $\HH^d$ has only hyperedges that are cliques of $G$.
Now, the maximal cliques of $G$ can be generated with polynomial delay using the algorithm by Tsukiyama et al.~\cite{MR476582} on the complement of $G$.
More precisely, after a preprocessing step that takes $\mathcal{O}(|V|^2)$ time, the algorithm outputs all the maximal cliques of $G$ one by one, spending time $\mathcal{O}(|V|^3)$ between two consecutive output cliques.
We run the algorithm, and every time it outputs a maximal clique of $G$ check if it belongs to $\HH^d$ or not.
This is easy to check in time $\mathcal{O}(|V|^2|E|)$: it must be a transversal of $\HH$ and must be minimal.
If you get a NO at any time, then stop, and the answer is NO, otherwise, the answer is YES.
The total running time of this approach is $\mathcal{O}(k|E|\Delta^2|V|^{2}+|V|^2)+\mathcal{O}((|V|^3+|V|^2|E|)\cdot|E(\HH^d)|)$, which simplifies to $\mathcal{O}(k|E|\Delta^2|V|^{2} + |V|^2(|V|+|E|)\cdot|E(\HH^d)|)$.
\end{proof}

\begin{remark}
The approach of the proof of~\Cref{prop:maximal-cliques} actually shows the following.
Assume that there exists an algorithm for generating all maximal cliques of an $n$-vertex graph $G$ with preprocessing time $\mathcal{O}(T_1(n))$ and that spends time  $\mathcal{O}(T_2(n))$ between outputting any two consecutive maximal cliques.
Then, given a hypergraph $\HH = (V,E)$, the {\sc Dual Conformality} problem is solvable in time $\mathcal{O}(|V|^3|E|^3+T_1(|V|)+(|V|^2|E|+T_2(|V|))\cdot |E(\HH^d)|)$.
In particular, one could apply not only the algorithm by Tsukiyama et al.~but also any of the more recent faster algorithms, e.g., those in~\cite{MR4083179,MR2159537,MR3864713}.
\end{remark}

Of course, the size of $\HH^d$ could easily be exponential in the size of $\HH$, so this algorithm is exponential in the size of $\HH$, in the worst case.\footnote{Not on average, though.
On average, the size of the dual hypergraph of a Sperner hypergraph $\HH$ is polynomial in the size of $\HH$.
This follows from the proof of the main result in~\cite{MR0640855}.}
Accordingly, the question about computing $\HH^d$ from $\HH$ was typically addressed from the point of view of output-sensitive algorithms (see, e.g.,~\cite{MR584512,MR401486,MR1361157}).
The best currently known algorithm for computing $\HH^d$ for a general hypergraph $\HH$ has a running time which is linear in the output size and quasi-polynomial in the input size~\cite{MR1417667}.

\begin{observation}
The {\sc Dual Conformality} problem is in {\sf co-NP}.
\end{observation}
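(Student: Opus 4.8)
The plan is to show that $\HH$ being \emph{not} dually conformal has a polynomial-size certificate that can be verified in polynomial time; this places the complement in \textsf{NP}, hence the problem itself in \textsf{co-NP}. By \Cref{prop:yes-instances-of-TC}, $\HH$ is dually conformal if and only if every maximal clique in $G(\HH^d)$ is a minimal transversal of $\HH$. So a ``no'' instance is witnessed by a single maximal clique $C$ of $G(\HH^d)$ that fails to be a minimal transversal. The natural certificate is the vertex set $C$ itself, which has size at most $|V|$ and is therefore polynomially bounded in the input.

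The main obstacle is that the dual hypergraph $\HH^d$ is not part of the input and may be exponentially large, so the verifier cannot simply compute $\HH^d$ and then check that $C$ is a maximal clique of its co-occurrence graph. I would circumvent this as follows. First, the co-occurrence graph $G = G(\HH^d)$ can be computed in polynomial time \emph{without} computing $\HH^d$, by \Cref{dual-co-occurrence-graph-in-poly-time}: two vertices are adjacent in $G$ exactly when $\{u,v\}$ is a subtransversal of $\HH$, and this pairwise test is polynomial. Given $G$, checking that the certified set $C$ is a maximal clique of $G$ is a routine polynomial-time task. It therefore remains to verify, in polynomial time, that $C$ is \emph{not} a minimal transversal of $\HH$.

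To verify that $C$ fails to be a minimal transversal, I would appeal to \Cref{no instances characterization 1}, which isolates the only two ways this can happen: either (a)~$C$ is not a transversal of $\HH$ at all, or (b)~$C$ is a transversal but not inclusion-minimal. Case~(a) is checked directly by scanning the hyperedges of $\HH$ and confirming that some $e \in E$ satisfies $e \cap C = \emptyset$, which takes time $\mathcal{O}(|V||E|)$. For case~(b), once $C$ is known to be a transversal, minimality fails exactly when some $v \in C$ can be removed while still meeting every hyperedge; this is tested by checking, for each $v \in C$, whether $C \setminus \{v\}$ is still a transversal, again in polynomial time. Thus the whole verification is polynomial.

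Putting these pieces together yields the nondeterministic algorithm for the complement: guess a subset $C \subseteq V$, then deterministically (i)~compute $G = G(\HH^d)$, (ii)~verify $C$ is a maximal clique of $G$, and (iii)~verify via \Cref{no instances characterization 1} that $C$ is not a minimal transversal of $\HH$, accepting iff all checks pass. Each step runs in time polynomial in $|V|$ and $|E|$, and the guessed certificate has polynomial size, so the complement of {\sc Dual Conformality} is in \textsf{NP} and hence {\sc Dual Conformality} is in \textsf{co-NP}. I expect the only subtlety worth emphasizing to be the point raised above, namely that the verifier never materializes the potentially exponential $\HH^d$ but instead reconstructs only its co-occurrence graph, which is guaranteed polynomial in size.
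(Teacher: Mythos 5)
Your proposal is correct and follows essentially the same route as the paper's own proof: the certificate is a maximal clique $C$ of $G(\HH^d)$ that is not a minimal transversal, the verifier computes $G(\HH^d)$ in polynomial time via \Cref{dual-co-occurrence-graph-in-poly-time} (thereby avoiding the potentially exponential $\HH^d$), checks maximality of $C$ as a clique, and checks non-minimal-transversality directly against the input $\HH$. Your explicit case split via \Cref{no instances characterization 1} merely spells out the last check, which the paper states more briefly, so there is no substantive difference.
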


\begin{proof}
Suppose that for a given hypergraph $\HH$, its dual is not conformal.
Then there exists a maximal clique $C$ of the co-occurrence graph of $\HH^d$ that is not a minimal transversal of $\HH$.
It can be verified in polynomial time whether a set $C\subseteq V(\HH)$ satisfies all these properties.
By \Cref{dual-co-occurrence-graph-in-poly-time}, the co-occurrence graph $G(\HH^d)$ can be computed in polynomial time.
Having computed $G(\HH^d)$, we can check in polynomial time if every two distinct vertices in $C$ are adjacent in $G(\HH^d)$ and whether no vertex in $V(G(\HH^d))\setminus C$ is adjacent to all vertices in $C$.
Since the hypergraph $\HH$ is our input, we can also check in polynomial time if $C$ is not a minimal transversal of $\HH$.
\end{proof}

However, the complexity of {\sc Dual Conformality} remains open in general.

\medskip

\subsection{A polynomial case of {\sc Dual Conformality}}

We develop a polynomial-time algorithm for {\sc Dual Conformality} when restricted to the hypergraphs $\HH$ such that every maximal clique of the co-occurrence graph of $\HH^d$ is a transversal of $\HH$.
This algorithm is then used in \Cref{sec:small-upper-clique-transversal} to develop a polynomial-time algorithm for recognizing graphs in which all minimal clique transversals have size at most $k$, for every fixed~$k$.

\begin{center}
\fbox{\parbox{0.85\linewidth}{\noindent
\textsc{Restricted Dual Conformality}\\[.8ex]
\begin{tabular*}{.93\textwidth}{rl}
{\em Input:} & A hypergraph $\HH$ such that every maximal clique of $G(\HH^d)$\\& is a transversal of $\HH$.\\
{\em Question:} & Is the dual hypergraph $\HH^d$ conformal?
\end{tabular*}
}}
\end{center}

\begin{lemma}\label{characterization-of-no-instances-for-RTC}
Let $\HH$ be a hypergraph and let $G = G(\HH^d)$.
Suppose that every maximal clique of $G$ is a transversal of $\HH$.
Then $\HH$ is not dually conformal if and only if $G$ contains a vertex $v$ such that $N_G(v)$ is a transversal of $\HH$.
\end{lemma}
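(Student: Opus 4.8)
The statement asks us to show that, under the hypothesis that every maximal clique of $G = G(\HH^d)$ is a transversal of $\HH$, dual non-conformality is equivalent to the existence of a vertex $v$ such that its neighborhood $N_G(v)$ is itself a transversal of $\HH$. My plan is to leverage \Cref{no instances characterization 1}: since we are assuming condition (a) cannot hold (every maximal clique is a transversal), $\HH$ fails to be dually conformal if and only if condition (b) holds, that is, $G$ contains a maximal clique $C$ that is a transversal of $\HH$ but \emph{not a minimal} one. So the whole task reduces to proving that, under the standing hypothesis, ``some maximal clique is a non-minimal transversal'' is equivalent to ``some vertex $v$ has $N_G(v)$ a transversal.''

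\textbf{The backward direction} $(\Leftarrow)$ looks easier, so I would do it first. Suppose $v$ is a vertex with $N_G(v)$ a transversal of $\HH$. I want to produce a maximal clique $C$ that is a transversal but not minimal. The natural candidate is to take a maximal clique $C$ of $G$ containing $v$; by the standing hypothesis $C$ is a transversal of $\HH$. Now $C \setminus \{v\} \subseteq N_G(v)$ (every other vertex of the clique is adjacent to $v$), and since $N_G(v)$ is a transversal, I would argue that $C \setminus \{v\}$ already hits every hyperedge — the cleanest route is to note that a maximal clique of $G$ containing $v$ must in fact contain $v$ together with enough of $N_G(v)$; more carefully, I would instead directly extend the transversal $N_G(v)$ to a minimal transversal $T \subseteq N_G(v)$, observe $T$ is a hyperedge of $\HH^d$ hence a clique of $G$ (\Cref{hyperedges are cliques in co-occurrence graph-corollary}), note $T \cup \{v\}$ is still a clique since $T \subseteq N_G(v)$, extend it to a maximal clique $C \supseteq T \cup \{v\}$, and conclude $C$ is a transversal (by hypothesis) that properly contains the transversal $T$, so $C$ is non-minimal, giving condition (b).

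\textbf{The forward direction} $(\Rightarrow)$ is where I expect the real work. Assume condition (b): there is a maximal clique $C$ of $G$ that is a transversal but not minimal. Then $C$ properly contains some minimal transversal $T$, so there is a vertex $v \in C \setminus T$. I claim this $v$ is the vertex we seek, i.e.\ $N_G(v) \supseteq C \setminus \{v\} \supseteq T$. Since $T$ is a transversal and $T \subseteq N_G(v)$, the set $N_G(v)$ is a transversal as well. The containment $C \setminus \{v\} \subseteq N_G(v)$ is immediate because $C$ is a clique. The one point requiring care is that $T \subseteq C \setminus \{v\}$, which holds precisely because $v \in C \setminus T$ forces $T \subseteq C$ and $v \notin T$; this is where I must be sure $v$ can be chosen outside $T$, guaranteed by the proper containment $T \subsetneq C$.

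\textbf{Main obstacle.} The delicate part is the interface between the two directions and the standing hypothesis: I must make sure that when I invoke ``every maximal clique is a transversal'' I am applying it to an actual maximal clique of $G$, and that the extension steps (extending $T \cup \{v\}$ to a maximal clique, or extracting $T$ from a transversal neighborhood) stay consistent with $T$ being a genuine hyperedge of $\HH^d$ and hence a clique. The forward direction is essentially bookkeeping once condition (b) is unpacked; the backward direction is the one where I must carefully produce a non-minimal transversal clique and verify it is both maximal and non-minimal, so that is where I would concentrate the verification.
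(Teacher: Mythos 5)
Your proposal is correct and follows essentially the same argument as the paper: for the backward direction, extract a minimal transversal $T\subseteq N_G(v)$, use \Cref{hyperedges are cliques in co-occurrence graph-corollary} to see that $T\cup\{v\}$ is a clique, and extend it to a maximal clique properly containing $T$; for the forward direction, pick $v\in C\setminus T$ for a minimal transversal $T$ properly contained in the offending maximal clique $C$, so that $T\subseteq N_G(v)$. You were right to discard your first instinct in the backward direction (an arbitrary maximal clique through $v$ would not let you certify non-minimality, since a subset of a transversal need not be a transversal), and the route you settled on is exactly the paper's.
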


\begin{proof}
Assume first that there exists a vertex $v$ of $G$ such that $N_G(v)$ is a transversal of $\HH$.
Let $T$ be a minimal transversal of $\HH$ such that $T\subseteq N_G(v)$.
By \Cref{hyperedges are cliques in co-occurrence graph-corollary}, every minimal transversal of $\HH$ is a clique in $G$.
Thus, $T$ is a clique and since $T$ is contained in $N_G(v)$, the set $T \cup \{v\}$ is also a clique.
Let $C$ be a maximal clique in $G$ such that $T \cup \{v\} \subseteq C$.
Then $C$ is a maximal clique in $G$ that properly contains a minimal transversal of $\HH$ (namely $T$).
Therefore, $C$ is not a minimal transversal of $\HH$.
By \Cref{prop:yes-instances-of-TC}, $\HH$ is not dually conformal.

Assume now that $\HH$ is not dually conformal.
By \Cref{prop:yes-instances-of-TC}, $G$ has a maximal clique $C$ that is not a minimal transversal of $\HH$.
Since, by the assumption on $\HH$ every maximal clique of $G$ is a transversal of $\HH$, there exists a minimal transversal $T$ of $\HH$ properly contained in $C$.
Let $v$ be a vertex in $C\setminus T$.
Then, since $C$ is a clique, $T$ is a subset of $N_G(v)$.
This implies that $N_G(v)$ is a transversal of $\HH$.
\end{proof}

\begin{sloppypar}
\begin{proposition}\label{RTC-poly-time}
Given a hypergraph $\HH = (V,E)$ with dimension $k$ and maximum degree~$\gD$ such that every maximal clique of $G(\HH^d)$ is a transversal of $\HH$, the \textsc{Restricted Dual Conformality} problem is solvable in time $\mathcal{O}(k|E|\Delta^2|V|^{2})$.
\end{proposition}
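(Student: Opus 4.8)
The plan is to turn Lemma~\ref{characterization-of-no-instances-for-RTC} directly into an algorithm. Under the standing promise of \textsc{Restricted Dual Conformality}, namely that every maximal clique of $G = G(\HH^d)$ is a transversal of $\HH$, that lemma tells us that $\HH$ fails to be dually conformal precisely when $G$ has some vertex $v$ whose neighborhood $N_G(v)$ is a transversal of $\HH$. So the whole task reduces to building $G$ and then scanning its vertices, testing for each one whether its neighborhood hits every hyperedge of $\HH$. The algorithm answers \textsc{No} (the dual is not conformal) as soon as it finds such a vertex, and \textsc{Yes} if none exists; correctness is immediate from the lemma, and we never even need to verify the input promise since it is guaranteed.

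First I would compute $G = G(\HH^d)$. By Proposition~\ref{dual-co-occurrence-graph-in-poly-time-improved} this can be done in time $\mathcal{O}(k|E|\Delta^2|V|^2)$, and as a by-product of the invocation there we already obtain the edge-vertex incidence matrix of $\HH$ and the doubly linked representation of its incident pairs. I would store $G$ so that the neighborhood $N_G(v)$ of any vertex can be listed in time $\mathcal{O}(|V|)$.

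Next comes the scanning phase. For a fixed vertex $v$, I first read off $N_G(v)$ and mark its vertices in an auxiliary array indexed by $V$, at cost $\mathcal{O}(|V|)$. Then I test whether $N_G(v)$ is a transversal of $\HH$ by traversing, for each hyperedge $e\in E$, the doubly linked list of its vertices (of length at most $k$) and checking whether some vertex of $e$ is marked; this costs $\mathcal{O}(k|E|)$. Finally I unmark the $\mathcal{O}(|V|)$ entries to reset the array. Since $|V| = |\bigcup_{e\in E} e| \le k|E|$, a single such test runs in time $\mathcal{O}(|V| + k|E|) = \mathcal{O}(k|E|)$, and over all $|V|$ vertices the scanning phase takes $\mathcal{O}(k|E||V|)$ time.

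Adding up, the running time is dominated by the construction of $G$, giving the claimed bound $\mathcal{O}(k|E|\Delta^2|V|^2)$; the scanning phase is absorbed since $\mathcal{O}(k|E||V|)$ is no larger. There is no genuine obstacle here beyond the bookkeeping: all the substantive content lives in Lemma~\ref{characterization-of-no-instances-for-RTC}, which replaces the seemingly global condition ``some maximal clique of $G$ properly contains a minimal transversal'' by the purely local, easily testable condition ``$N_G(v)$ is a transversal for some $v$''. The only point that needs a moment's care is confirming that the transversal tests do not dominate the overall cost, which follows from the inequality $|V| \le k|E|$.
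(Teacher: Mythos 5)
Your proposal is correct and follows essentially the same route as the paper's proof: compute $G(\HH^d)$ via \Cref{dual-co-occurrence-graph-in-poly-time-improved} in time $\mathcal{O}(k|E|\Delta^2|V|^2)$, then scan all vertices and test in $\mathcal{O}(k|E|)$ time per vertex whether $N_G(v)$ is a transversal, with correctness given by \Cref{characterization-of-no-instances-for-RTC}. The only difference is that you spell out the marking-array implementation of the transversal test, which the paper simply asserts.
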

\end{sloppypar}

\begin{proof}
Using \Cref{data-structure-for-hypergraphs}, we compute in time $\mathcal{O}(|V||E|)$ the edge-vertex incidence matrix of $\cH$ and the doubly linked representation of its incident pairs.
Next, we compute the co-occurrence graph $G=G(\HH^d)$ of $\HH^d$.
By \Cref{dual-co-occurrence-graph-in-poly-time-improved}, this can be done in time $\mathcal{O}(k|E|\Delta^2|V|^{2})$.
Then we iterate over all vertices $v$ of $G$ and verify in time $\mathcal{O}(k|E|)$ if the neighborhood of $v$ in $G$ is a transversal of $\HH$.
By~\Cref{characterization-of-no-instances-for-RTC}, if such a vertex exists, then $G$ is not dually conformal, and otherwise it is.
The total running time of this approach is $\mathcal{O}(k|E|\Delta^2|V|^{2}+k|V||E|) = \mathcal{O}(k|E|\Delta^2|V|^{2})$.
\end{proof}

\begin{remark}
The time complexity of the algorithm given by \Cref{RTC-poly-time} is dominated by the time needed to compute the co-occurrence graph of $\HH^d$.
The complexity of the remaining steps is only $\mathcal{O}(k|V||E|)$.
\end{remark}

\section{Graphs with small upper clique transversal number}\label{sec:small-upper-clique-transversal}

In this section we shift the focus from hypergraphs to graphs and apply the results from \Cref{sec:dually-conformal} to a problem about clique transversals in graphs.
Recall that a \emph{clique transversal} in a graph is a set of vertices intersecting all maximal cliques.
The problem of determining the minimum size of a clique transversal has received considerable attention in the literature (see, e.g., the works by Payan in 1979~\cite{MR539710}, by Andreae, Schughart, and Tuza in 1991~\cite{MR1099264}, and by Erd\H{o}s, Gallai, and Tuza in 1992~\cite{MR1189850}, as well as more recent works ~\cite{MR1201987,MR1375117,MR1413638,MR1423977,MR1737764,MR4213405,MR4264990,MR3875141,MR3350239,MR3325542,MR3131902,MR2203202}).
Recently, Milani\v{c} and Uno initiated in~\cite{MilanicUnoWG2023} the study of the ``upper'' variant of this parameter.
An \emph{upper clique transversal} of a graph $G$ is a minimal clique transversal of maximum size.
The \emph{upper clique transversal number} of a graph $G$ is denoted by $\tau_c^+(G)$ and defined as the maximum size of a minimal clique transversal in $G$.
In hypergraph terminology, the upper clique transversal number of a graph $G$ is the maximum size of a hyperedge of the dual of the clique hypergraph.
The corresponding decision problem is as follows.

\begin{center}
\fbox{\parbox{0.85\linewidth}{\noindent
\textsc{Upper Clique Transversal}\\[.8ex]
\begin{tabular*}{.93\textwidth}{rl}
{\em Input:} & A graph $G$ and an integer $k$.\\
{\em Question:} & Does $G$ contain a minimal clique transversal of size at least $k$,\\&
i.e., is $\tau_c^+(G)\ge k$?
\end{tabular*}
}}
\end{center}

Milani\v{c} and Uno showed in~\cite{MilanicUnoWG2023} that \textsc{Upper Clique Transversal} is \hbox{{\sf NP}-complete} in the classes of chordal graphs, chordal bipartite graphs, and line graphs of bipartite graphs, but solvable in linear time in the classes of split graphs and proper interval graphs.

We now show that for fixed $k$, the problem can be reduced in polynomial time to the \textsc{Restricted Dual Conformality} problem, and is thus polynomial-time solvable.
We consider the following family of problems parameterized by a positive integer $k$, where, unlike for the \textsc{Upper Clique Transversal} problem, $k$ is fixed and not part of the input.

\begin{center}
\fbox{\parbox{0.85\linewidth}{\noindent
\textsc{$k$-Upper Clique Transversal}\\[.8ex]
\begin{tabular*}{.93\textwidth}{rl}
{\em Input:} & A graph $G$.\\
{\em Question:} & Does $G$ contain a minimal clique transversal of size at least $k$,\\& i.e., is $\tau_c^+(G)\ge k$?
\end{tabular*}
}}
\end{center}

The problem is only interesting for $k\geq 2$, since every graph with at least one vertex is a yes-instance to the \textsc{$1$-Upper Clique Transversal} problem.

Let us first note that the variant of the \textsc{$k$-Upper Clique Transversal} problem in which the family of maximal cliques of the input graph $G$ is also part of the input admits a simple polynomial-time algorithm.
It suffices to verify if there exists a set $X\subseteq V(G)$ of size $k-1$ that is not a clique transversal of $G$ but is contained in some minimal clique transversal.
The former condition can be checked directly using the family of maximal cliques of $G$, and the latter condition can be checked in polynomial time since $k$ is fixed, by \Cref{subtransversal-running-time}.
An alternative solution would be to verify if there exists a set $X\subseteq V(G)$ of size $k$ that is contained in some minimal clique transversal.

\begin{sloppypar}
Solving the problem without knowing the family of maximal cliques (which could be exponential in the size of $G$) requires more work, but is still doable in polynomial time.
\end{sloppypar}

\begin{theorem}\label{poly-k-UCT}
For every integer $k\ge 2$, given a graph $G = (V,E)$, the \textsc{$k$-Upper Clique Transversal} problem is solvable in time $\mathcal{O}(|V|^{3k-3})$.
\end{theorem}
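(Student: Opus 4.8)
The plan is to reduce \textsc{$k$-Upper Clique Transversal} to polynomially many applications of the \textsc{Restricted Dual Conformality} algorithm of \Cref{RTC-poly-time}, exploiting that $k$ is fixed. The starting point is a reformulation in terms of the clique hypergraph $\mathcal{K}$ of $G$ (the hypergraph whose hyperedges are the maximal cliques of $G$), whose dual $\mathcal{K}^d$ is exactly the hypergraph of minimal clique transversals of $G$. I claim that $\tau_c^+(G)\ge k$ if and only if there is a set $X\subseteq V$ with $|X|=k-1$ that is a subtransversal of $\mathcal{K}$ but not a clique transversal of $G$. For the forward direction, take a minimal clique transversal $T$ with $|T|\ge k$ and let $X$ be any $(k-1)$-subset of $T$; then $X$ is a subtransversal (being contained in $T$), and $X$ cannot be a clique transversal, since a minimal clique transversal containing a clique transversal must coincide with it, contradicting $X\subsetneq T$. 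For the converse, a subtransversal $X$ that is not a clique transversal extends to a minimal clique transversal $T\supseteq X$ with $T\neq X$, hence $|T|\ge k$. Thus it suffices, for each of the $\mathcal{O}(|V|^{k-1})$ candidate sets $X$, to decide whether $X$ extends to a minimal clique transversal strictly larger than $X$.

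The obstacle is that $\mathcal{K}$ may have exponentially many hyperedges, so neither the subtransversal test of \Cref{subtransversal-running-time} nor the co-occurrence graph computation of \Cref{dual-co-occurrence-graph-in-poly-time-improved} may be applied to $\mathcal{K}$ directly. The plan is instead to attach to each $X$ an auxiliary hypergraph $\mathcal{H}_X$ of size polynomial in $|V|$ (for fixed $k$) on the vertex set $V$, whose hyperedges are a suitable polynomial family of cliques of $G$ chosen so that the extension question for $X$ becomes an instance of \textsc{Restricted Dual Conformality} for $\mathcal{H}_X$. The translation is guided by \Cref{characterization-of-no-instances-for-RTC}: the ``no'' witness for dual conformality is a vertex $v$ of $G(\mathcal{H}_X^d)$ whose neighborhood $N_{G(\mathcal{H}_X^d)}(v)$ is a transversal of $\mathcal{H}_X$, and the design of $\mathcal{H}_X$ must ensure that such a witness corresponds precisely to a minimal clique transversal of $G$ that properly contains $X$. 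Before invoking \Cref{RTC-poly-time} one must also verify its hypothesis for $\mathcal{H}_X$, namely that every maximal clique of $G(\mathcal{H}_X^d)$ is a transversal of $\mathcal{H}_X$; this is where the freedom in choosing the hyperedges of $\mathcal{H}_X$ is spent. \Cref{subtransversal-characterization} is the natural tool here, since it expresses subtransversality of bounded sets through a bounded collection of private cliques, and the condition that their union avoid every maximal clique is equivalent to a complementary clique-transversal condition checkable from $G$ alone.

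For the running time, there are $\mathcal{O}(|V|^{k-1})$ sets $X$ to process. For each $X$, constructing $\mathcal{H}_X$, computing $G(\mathcal{H}_X^d)$ via \Cref{dual-co-occurrence-graph-in-poly-time-improved}, and running the neighborhood-transversal tests of \Cref{RTC-poly-time} take time polynomial in $|V|$; a careful accounting of the dimension, maximum degree, and number of hyperedges of $\mathcal{H}_X$ (all controlled by the fixed parameter $k$) yields a per-set cost of $\mathcal{O}(|V|^{2k-2})$, and multiplying by the number of candidate sets gives the claimed bound $\mathcal{O}(|V|^{3k-3})$.

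The main obstacle is the middle step: producing, for each $X$, a polynomial-size hypergraph $\mathcal{H}_X$ that both satisfies the precondition of \textsc{Restricted Dual Conformality} and faithfully encodes the statement ``$X$ extends to a strictly larger minimal clique transversal'', all without ever listing the possibly exponentially many maximal cliques of $G$. Everything else—the reformulation, the reduction to $\mathcal{O}(|V|^{k-1})$ independent tests, and the final time bound—is routine once this encoding is in place. I would therefore spend most of the effort pinning down $\mathcal{H}_X$ and its correctness proof, using \Cref{subtransversal-characterization} to keep the family of relevant cliques polynomially bounded and \Cref{characterization-of-no-instances-for-RTC} to read off the answer.
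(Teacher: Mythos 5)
Your opening reformulation is correct (and in fact appears in the paper as a remark for the easier variant where the maximal cliques are part of the input), but the proof has a genuine gap at exactly the point you flag: the auxiliary hypergraph $\mathcal{H}_X$ is never constructed, and the surrounding sketch does not make it plausible that the tools you name can produce it. \Cref{subtransversal-characterization} applied to the clique hypergraph $\mathcal{K}$ requires choosing, for each $v\in X$, a maximal clique meeting $X$ exactly in $\{v\}$ -- there can be exponentially many such cliques -- and then verifying that the union of the chosen cliques, minus $X$, contains no maximal clique of $G$. That last condition, which you describe as ``a complementary clique-transversal condition checkable from $G$ alone,'' is checking whether a vertex set of \emph{unbounded} size is a clique transversal, and this is {\sf co-NP}-complete in general (the paper's footnote cites Zang's result that deciding whether an independent set is a clique transversal is {\sf co-NP}-complete; \Cref{cor:constant} only handles sets of bounded size). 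So the route through subtransversality of $\mathcal{K}$ hits a hardness wall, and the claimed per-set cost of $\mathcal{O}(|V|^{2k-2})$ is asserted with nothing to base it on. What remains is a program, not a proof.

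The paper sidesteps precisely this difficulty by inverting the logic: instead of testing, per candidate set $X$, whether $X$ extends to a large minimal clique transversal, it builds a \emph{single} hypergraph $\HH$ whose hyperedges are all minimal clique transversals of $G$ of size less than $k$ (computable in time $\mathcal{O}(|V|^{2k-1})$ via \Cref{cor:constant-minimal}, since each candidate has bounded size), and then uses \Cref{lem:clique-transversals}: $\tau_c^+(G)<k$ if and only if $\HH$ is dually conformal \emph{and} $G=G(\HH^d)$. The precondition of \textsc{Restricted Dual Conformality} then comes for free -- every hyperedge of $\HH$ is a clique transversal of $G$, so when $G=G(\HH^d)$ every maximal clique of $G(\HH^d)$ is automatically a transversal of $\HH$ -- and no subtransversality test with respect to $\mathcal{K}$ is ever performed; \Cref{subtransversal-running-time} is invoked only for the small hypergraph $\HH$, inside \Cref{dual-co-occurrence-graph-in-poly-time-improved}. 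Since $\HH$ has $\mathcal{O}(|V|^{k-1})$ hyperedges, dimension at most $k-1$, and maximum degree $\mathcal{O}(|V|^{k-2})$, one global run of \Cref{dual-co-occurrence-graph-in-poly-time-improved} and \Cref{RTC-poly-time} costs $\mathcal{O}(|V|^{3k-3})$ -- note that even under your optimistic parameter assumptions, a per-$X$ co-occurrence computation repeated $\mathcal{O}(|V|^{k-1})$ times would already overshoot this bound. If you want to salvage your outline, the missing insight is that the witness structure should be aggregated globally (the check $G\neq G(\HH^d)$ detects sets that are subtransversals of $\mathcal{K}$ but not of $\HH$, and the conformality test detects the rest), rather than localized to each $X$.
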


We prove \Cref{poly-k-UCT} in several steps. One key ingredient is a polynomial-time algorithm to test if a given constant-sized set of vertices in a graph is a clique transversal.\footnote{Note that the assumption on the bound on the size of the set is essential.
In fact, as shown by Zang~\cite{MR1344757}, it is {\sf co-NP}-complete to check, given a graph $G$ and an independent set $I$, whether $I$ is a clique transversal in $G$.}
By definition, a set $X$ of vertices in a graph $G$ is a clique transversal if and only if
$X$ intersects all maximal cliques. In particular, this means that for every clique $C$ in $G-X$ there exists a vertex $x\in X$ containing $C$ in its neighborhood.
As we show next, it is sufficient to require this condition for all
cliques $C$ in $G-X$ such that $|C|\le |X|$.

\begin{lemma}\label{thm:clique-transversals}
For every graph $G$ and every set $X\subseteq V(G)$, the following statements are equivalent.
\begin{enumerate}
  \item $X$ is a clique transversal in $G$.
  \item For every clique $C$ in $G-X$, there exists a vertex
$x\in X$ such that $C\subseteq N_G(x)$.
  \item For every clique $C$ in $G-X$ such that
$|C|\le |X|$, there exists a vertex
$x\in X$ such that $C\subseteq N_G(x)$.
\end{enumerate}
\end{lemma}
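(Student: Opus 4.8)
The plan is to establish $(1)\Leftrightarrow(2)$ first, then observe that $(2)\Rightarrow(3)$ is immediate, and finally close the loop with the implication $(3)\Rightarrow(2)$, which is the only step requiring a genuine idea.

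For $(1)\Rightarrow(2)$, I would assume $X$ is a clique transversal and take an arbitrary clique $C$ of $G-X$. Extend $C$ to a maximal clique $M$ of $G$. Since $X$ meets every maximal clique, there is some $x\in M\cap X$; because $C\subseteq V(G)\setminus X$ we have $x\notin C$, and since $M$ is a clique containing both $x$ and all of $C$, every vertex of $C$ is adjacent to $x$, i.e.\ $C\subseteq N_G(x)$. For the converse $(2)\Rightarrow(1)$, I would argue by contradiction: if some maximal clique $M$ avoided $X$, then $M$ would be a clique of $G-X$, so by $(2)$ there would exist $x\in X$ with $M\subseteq N_G(x)$; but then $M\cup\{x\}$ would be a clique strictly larger than $M$ (as $x\notin M$, since $M\cap X=\emptyset$), contradicting the maximality of $M$. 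The implication $(2)\Rightarrow(3)$ needs no work, as the cliques quantified over in $(3)$ form a subfamily of those in $(2)$.

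The crux is $(3)\Rightarrow(2)$, where I must upgrade the condition from small cliques to arbitrary ones. Assume $(3)$ and let $C$ be any clique of $G-X$; the goal is a single vertex $x\in X$ dominating $C$. Suppose, for contradiction, that no such $x$ exists. Then for each $x\in X$ I can select a witness $c_x\in C$ with $c_x\notin N_G(x)$. Collecting these witnesses into $C'=\{c_x : x\in X\}$, I obtain a subset of $C$, hence a clique of $G-X$, with $|C'|\le|X|$ since at most one witness is chosen per element of $X$. Now $(3)$ applies to $C'$ and produces some $x^{*}\in X$ with $C'\subseteq N_G(x^{*})$. But the witness $c_{x^{*}}\in C'$ was chosen precisely so that $c_{x^{*}}\notin N_G(x^{*})$, a contradiction. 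Therefore the desired vertex exists, establishing $(2)$.

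I expect this selection argument to be the main obstacle, though it is short once set up; the points to verify carefully are that every witness $c_x$ genuinely lies in $C$ (so that $C'$ is a clique disjoint from $X$ and $(3)$ is applicable) and that the cardinality bound $|C'|\le|X|$ holds, both of which follow directly from the construction. The remaining implications are routine applications of clique maximality and the defining property of clique transversals.
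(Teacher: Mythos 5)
Your proof is correct and rests on essentially the same key idea as the paper's: for each vertex of $X$, select one non-neighbor inside a clique to produce a clique of $G-X$ of size at most $|X|$ on which the domination condition fails. The only difference is routing --- you prove $(3)\Rightarrow(2)$ directly by applying this selection inside an arbitrary undominated clique, whereas the paper proves $(3)\Rightarrow(1)$ by contraposition, applying the same selection to a maximal clique avoiding $X$; this is a cosmetic variation, not a different argument.
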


\begin{proof}
Suppose $X$ is a clique transversal in $G$ and let $C$ be a clique in $G-X$. Let $C'$ be a maximal clique in $G$ such that $C\subseteq C'$.
Then $C'$ contains a vertex $x\in X$. Since $C\cup \{x\}\subseteq C'$ and $C'$ is a clique, we must have $C\subseteq N_G(x)$.

Clearly, the second statement implies the third one.

We prove that the third statement implies the first one by contraposition. Suppose that $X$ is not a clique transversal in $G$. Then there
exists a maximal clique $C'$ in $G$ such that $C'\cap X = \emptyset$. Since $C'$ is a maximal clique disjoint from $X$, every vertex in $X$ has a non-neighbor in $C'$. Selecting one such non-neighbor for each vertex in $X$ results in a clique $C$ in $G-X$ such that
$|C|\le |X|$ and every vertex in $X$ has a non-neighbor in $C$.
Thus, there is no vertex $x\in X$ such that $C\subseteq N_G(x)$.
\end{proof}

\Cref{thm:clique-transversals} implies the following characterization of clique transversals of size one.
A \emph{universal vertex} in a graph $G$ is a vertex adjacent to all other vertices.

\begin{corollary}\label{cor:universal}
Given a graph $G= (V,E)$ and a vertex $v\in V$, the set $\{v\}$ is a clique transversal in $G$ if and only if $v$ is a universal vertex in $G$.
\end{corollary}

\begin{proof}
By \Cref{thm:clique-transversals}, the singleton $\{v\}$ is a clique transversal in $G$ if and only if for every clique $C$ in $G-v$, it holds that $C\subseteq N_G(v)$.
If this latter condition is satisfied, then $v$ is universal in $G$, since otherwise for any vertex $w$ in $G$ nonadjacent to $v$, the set $C = \{w\}$ would be a clique in $G-v$ that violates the condition $C\subseteq N_G(v)$.
And conversely, if $v$ is universal in $G$, then $N_G(v) = V(G)\setminus\{v\}$ and hence the condition $C\subseteq N_G(v)$ is satisfied trivially for any clique $C$ in $G-v$.
\end{proof}

As another consequence of \Cref{thm:clique-transversals}, we obtain that when the size of a set of vertices is bounded by a constant, testing whether the set is a clique transversal can be done in polynomial time.

\begin{proposition}\label{cor:constant}
For every fixed $k\ge 1$, there is an algorithm running in time $\mathcal{O}(|V|^{k})$ to check if, given a graph $G = (V,E)$ and a set $X\subseteq V(G)$ with $|X|\le k$, the set $X$ is a clique transversal of $G$.
\end{proposition}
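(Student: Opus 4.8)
The plan is to reduce directly to \Cref{thm:clique-transversals}: by the equivalence of its first and third statements, $X$ is a clique transversal of $G$ if and only if every clique $C$ in $G-X$ with $|C|\le |X|$ is contained in the neighborhood $N_G(x)$ of some vertex $x\in X$. Since $|X|\le k$, it therefore suffices to examine only the vertex subsets of $V\setminus X$ of size at most $k$, of which there are $\sum_{i=0}^{k}\binom{|V|}{i} = \mathcal{O}(|V|^{k})$.

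First I would enumerate all these subsets $C\subseteq V\setminus X$ with $|C|\le |X|$. For each such $C$ I would first test whether $C$ is a clique of $G$ (equivalently of $G-X$, as $G-X$ is an induced subgraph), and, if so, test whether there is a vertex $x\in X$ with $C\subseteq N_G(x)$. If some clique $C$ fails this latter test, the algorithm reports that $X$ is not a clique transversal; if every clique passes, it reports that $X$ is a clique transversal. Correctness is immediate from the cited equivalence, and note that the edge case $X=\emptyset$ is handled automatically, since then the only candidate is $C=\emptyset$, for which no witness $x\in X$ exists.

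For the running time, the point is that each subset $C$ is processed in constant time: since $|C|\le k$ and $|X|\le k$ with $k$ fixed, deciding whether $C$ is a clique requires $\mathcal{O}(k^2)$ adjacency lookups, and checking $C\subseteq N_G(x)$ over all $x\in X$ requires $\mathcal{O}(k^2)$ lookups as well. To make each adjacency lookup run in $\mathcal{O}(1)$ time I would, for $k\ge 2$, first build the adjacency matrix of $G$ in time $\mathcal{O}(|V|^2) = \mathcal{O}(|V|^{k})$; multiplying by the $\mathcal{O}(|V|^{k})$ subsets then yields the claimed bound. The case $k=1$ is special, since the target bound $\mathcal{O}(|V|)$ does not permit building the adjacency matrix; here I would instead appeal to \Cref{cor:universal}, checking in time $\mathcal{O}(|V|)$ whether the single vertex of $X$ (if any) is universal and declaring a failure immediately when $X=\emptyset$.

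The only genuinely delicate point, and the one I expect to require the most care, is this running-time bookkeeping rather than the combinatorial core: one must ensure that adjacency queries are answered in constant time so that the $\mathcal{O}(|V|^{k})$ subset count is not inflated by a logarithmic factor from binary search in sorted adjacency lists, and one must treat $k=1$ separately so that the $\mathcal{O}(|V|^2)$ preprocessing is not incurred when the target is only $\mathcal{O}(|V|)$. Everything else follows mechanically from \Cref{thm:clique-transversals}.
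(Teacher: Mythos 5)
Your proposal is correct and follows essentially the same route as the paper's proof: both reduce to \Cref{thm:clique-transversals}, build the adjacency matrix in $\mathcal{O}(|V|^2)$ time for constant-time adjacency tests when $k\ge 2$, enumerate the $\mathcal{O}(|V|^{k})$ candidate cliques of size at most $|X|$, and dispatch the case $k=1$ separately via the universal-vertex characterization of \Cref{cor:universal}. The only (immaterial) difference is that you enumerate subsets of $V\setminus X$ directly, whereas the paper enumerates all small cliques of $G$ and discards those meeting $X$; your explicit treatment of $X=\emptyset$ is a small bonus the paper leaves implicit.
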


\begin{proof}
If $k = 1$, then by \Cref{cor:universal} $X$ is a clique transversal of $G$ if and only if $X = \{v\}$ such that $v$ is a universal vertex in $G$.
This condition can be tested in time $\mathcal{O}(|V|)$.

Assuming $k \ge 2$, we first compute in time $\mathcal{O}(|V|^2)$ the adjacency matrix of $G$.
This will allow for testing adjacency of a pair of vertices in constant time.
By \Cref{thm:clique-transversals}, it suffices to verify if every clique $C$ in $G$ with size at most $|X|$ either contains a vertex of $X$ or is contained in the neighborhood of some vertex in $X$.
Since $|X|\le k$, all such cliques can be enumerated in time $\mathcal{O}(|V|^k)$.
For each such clique $C$, we can check in time $\mathcal{O}(|C||X|) = \mathcal{O}(1)$ if $C$ is disjoint from $X$.
If it is, then we iterate over all $\mathcal{O}(1)$ vertices $x\in X$ and for each such vertex $x$ check the condition $C\subseteq N_G(x)$ in time $\mathcal{O}(|C|) = \mathcal{O}(1)$.
If for some clique $C$ that is disjoint from $X$ no such vertex $x\in X$ exists, we conclude that $X$ is not a clique transversal in $G$, and otherwise it is.
The total running time is $\mathcal{O}(|V|^{k})$.
\end{proof}

Furthermore, note that for every fixed $k$, if a set $X\subseteq V(G)$ with $|X|\le k$ is a clique transversal of $G$, then we can check in polynomial time if $X$ is a minimal clique transversal simply by checking, for all $x\in X$, whether the set $X\setminus \{x\}$ is a clique transversal.
This can be done in time $\mathcal{O}(k|V|^{k-1}) = \mathcal{O}(|V|^{k-1})$ by \Cref{cor:constant}.

\begin{corollary}\label{cor:constant-minimal}
For every fixed $k$, there is an algorithm running in time $\mathcal{O}(|V|^{k})$ to check if, given a graph $G = (V,E)$ and a set $X\subseteq V(G)$ with $|X|\le k$, the set $X$ is a minimal clique transversal of $G$.
\end{corollary}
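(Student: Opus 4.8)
The plan is to reduce the minimality test to a constant number of invocations of the clique-transversal test provided by \Cref{cor:constant}. First I would run the algorithm of \Cref{cor:constant} on the pair $(G,X)$ to decide, in time $\mathcal{O}(|V|^k)$, whether $X$ is a clique transversal at all. If it is not, then $X$ is certainly not a \emph{minimal} clique transversal, and we output \textsc{No}. So the interesting case is when $X$ has already been confirmed to be a clique transversal, and it remains to decide minimality.

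The key structural observation is that clique transversals are closed under taking supersets: if $Y$ intersects every maximal clique and $Y\subseteq Z\subseteq V(G)$, then $Z$ intersects every maximal clique as well. Consequently, $X$ fails to be minimal precisely when some proper subset of $X$ is already a clique transversal, and this happens if and only if one of the immediate subsets $X\setminus\{x\}$ (with $x\in X$) is a clique transversal. Indeed, if $Y\subsetneq X$ is a clique transversal, pick any $x\in X\setminus Y$; then $Y\subseteq X\setminus\{x\}$, so by upward closure $X\setminus\{x\}$ is a clique transversal too. Thus minimality of $X$ is equivalent to the assertion that $X\setminus\{x\}$ is \emph{not} a clique transversal for every $x\in X$.

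Given this equivalence, I would iterate over the at most $k$ vertices $x\in X$ and, for each, apply \Cref{cor:constant} to the set $X\setminus\{x\}$, which has size at most $k-1$; each such call runs in time $\mathcal{O}(|V|^{k-1})$. If any of these subsets turns out to be a clique transversal, we declare that $X$ is not minimal; otherwise $X$ is a minimal clique transversal. Since $k$ is fixed, the minimality phase costs $\mathcal{O}(k\,|V|^{k-1})=\mathcal{O}(|V|^{k-1})$ in total, and adding the initial $\mathcal{O}(|V|^k)$ clique-transversal test yields an overall running time of $\mathcal{O}(|V|^k + |V|^{k-1}) = \mathcal{O}(|V|^k)$, as claimed.

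There is no genuine obstacle here: the statement is essentially a bookkeeping consequence of \Cref{cor:constant}, and the only point requiring a line of justification is the upward-closure remark that lets us replace ``no proper subset is a clique transversal'' by the finitely many tests ``$X\setminus\{x\}$ is not a clique transversal,'' avoiding an exponential enumeration of subsets. Everything else is a routine running-time summation using that $k$ is a constant.
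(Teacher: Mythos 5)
Your proposal is correct and coincides with the paper's own argument: the paper likewise first tests whether $X$ is a clique transversal via \Cref{cor:constant} and then, relying on the upward closure of clique transversals, checks for each $x\in X$ whether $X\setminus\{x\}$ is a clique transversal, for a total running time of $\mathcal{O}(|V|^{k}+k|V|^{k-1})=\mathcal{O}(|V|^{k})$. Your explicit justification of why only the $|X|$ immediate subsets need to be tested is a point the paper leaves implicit, but the approach is the same.
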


\begin{lemma}\label{lem:clique-transversals}
Let $k$ be a positive integer and $G$ be a graph.
Let $\HH$ be the hypergraph defined as follows: the vertex set of $\HH$ is $V(G)$, and the hyperedges of $\HH$ are precisely the minimal clique transversals $X$ of $G$ such that $|X|\le k$.
Then the following statements are equivalent.
\begin{enumerate}
\item\label{item1} $\tau_c^+(G)\le k$.
\item\label{item2} $\HH$ is the hypergraph of all minimal clique transversals of $G$.
\item\label{item3} $\HH$ is dually conformal and $G = G(\HH^d)$.
\end{enumerate}
\end{lemma}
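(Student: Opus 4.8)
The plan is to prove the three equivalences by establishing a cycle of implications, $\ref{item1} \Rightarrow \ref{item2} \Rightarrow \ref{item3} \Rightarrow \ref{item1}$, leaning on the characterization of dually conformal Sperner hypergraphs from \Cref{lem:clique-transversals-characterization} and the uniqueness of the co-occurrence graph noted after \Cref{characterization of conformality}.

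First I would show $\ref{item1} \Rightarrow \ref{item2}$. Let $\HH'$ denote the hypergraph of \emph{all} minimal clique transversals of $G$. By construction, every hyperedge of $\HH$ is a minimal clique transversal of $G$, so $E(\HH) \subseteq E(\HH')$. For the reverse inclusion, take any minimal clique transversal $X$ of $G$. Since $\tau_c^+(G) \le k$ means every minimal clique transversal has size at most $k$, we have $|X| \le k$, so $X$ is a hyperedge of $\HH$ by definition. Hence $E(\HH') \subseteq E(\HH)$, and the two hypergraphs coincide (they share the vertex set $V(G)$). Note I should also observe that $\HH$ is nonempty and covers all vertices, which holds because every graph with at least one vertex has a minimal clique transversal, and every vertex lies in some maximal clique, hence in some minimal transversal of the clique hypergraph; this is needed for $\HH$ to be a legitimate hypergraph in the sense used here.

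Next I would handle $\ref{item2} \Rightarrow \ref{item3}$. Assuming $\HH$ is the hypergraph of all minimal clique transversals of $G$, the forward direction of \Cref{lem:clique-transversals-characterization} (applied in the ``conversely'' direction, since $\HH$ arises exactly as such a hypergraph) tells us $\HH$ is a dually conformal Sperner hypergraph, and moreover that $\HH^d$ is the clique hypergraph of $G$. It then remains to identify $G$ with $G(\HH^d)$. Since $\HH^d$ is the clique hypergraph of $G$, its hyperedges are exactly the maximal cliques of $G$; in particular $\HH^d = (V(G), \mathcal{C})$ where $\mathcal{C}$ is the collection of maximal cliques of $G$, which certainly contains all maximal cliques of $G$. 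By \Cref{a-technical-lemma}, this forces $G = G(\HH^d)$, giving property \ref{item3}.

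Finally, for $\ref{item3} \Rightarrow \ref{item1}$, suppose $\HH$ is dually conformal with $G = G(\HH^d)$. Since $\HH$ is Sperner (its hyperedges are minimal clique transversals, none containing another) and dually conformal, \Cref{Sperner-conformal} applied to the conformal Sperner hypergraph $\HH^d$ gives that $\HH^d$ is the clique hypergraph of its co-occurrence graph $G(\HH^d) = G$; thus $\HH = (\HH^d)^d$ is precisely the hypergraph of all minimal clique transversals of $G$ by \Cref{lem:clique-transversals-characterization}. But every hyperedge of $\HH$ has size at most $k$ by construction, so every minimal clique transversal of $G$ has size at most $k$, which is exactly $\tau_c^+(G) \le k$. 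The main obstacle I anticipate is the subtle point in this last implication: one must be careful that the hypergraph $\HH$ built with the size cap $\le k$ genuinely equals $(\HH^d)^d$ and not merely a sub-hypergraph of it. This requires the hypothesis $G = G(\HH^d)$ to pin down $G$ uniquely, so that $\HH^d$'s maximal cliques are exactly the minimal clique transversals we expect; without the $G = G(\HH^d)$ condition, $\HH$ could be dually conformal while encoding a different, ``wrong'' graph, and the size bound would not transfer back to $\tau_c^+(G)$. Verifying that these identifications are forced is where the careful bookkeeping lies, though each step reduces cleanly to the cited results.
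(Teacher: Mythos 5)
Your proposal is correct and takes essentially the same route as the paper's proof: the paper likewise settles the equivalence of \cref{item1,item2} directly from the definition of $\tau_c^+(G)$, and links \cref{item2,item3} via \Cref{duality-is-an-involution}, \Cref{Sperner-conformal}, and \Cref{a-technical-lemma} --- the very ingredients you use (your appeal to \Cref{lem:clique-transversals-characterization} is just a packaged form of the first two). The only cosmetic differences are that you arrange the implications as a cycle $\ref{item1}\Rightarrow\ref{item2}\Rightarrow\ref{item3}\Rightarrow\ref{item1}$ rather than two separate equivalences, and that you explicitly flag (and correctly resolve) the degenerate-coverage technicality that the paper leaves implicit.
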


\begin{sloppypar}
\begin{proof}
The equivalence between \cref{item1,item2} follows directly from the definition of $\tau_c^+(G)$.
We thus focus on establishing the equivalence between \cref{item2,item3}.

Assume that $\HH$ is the hypergraph of all minimal clique transversals of $G$, that is, $\HH$ is the dual hypergraph of the clique hypergraph of $G$.
By \Cref{duality-is-an-involution}, the dual hypergraph of $\HH$ is the clique hypergraph of $G$.
By \Cref{Sperner-conformal}, $\HH^d$ is conformal, that is, $\HH$ is dually conformal.
Furthermore, \Cref{a-technical-lemma} shows that $G = G(\HH^d)$.

Conversely, assume now that $\HH$ is dually conformal and $G = G(\HH^d)$.
Since $\HH^d$ is conformal, \Cref{Sperner-conformal} implies that $\HH^d$ is the clique hypergraph of $G(\HH^d) = G$.
Thus, by \Cref{duality-is-an-involution}, $\HH = (\HH^d)^d$ is the hypergraph of all minimal clique transversals of $G$.
\end{proof}
\end{sloppypar}

We now have everything ready to prove \Cref{poly-k-UCT}.

\begin{sloppypar}
\begin{proof}[Proof of \Cref{poly-k-UCT}.]
We first describe the algorithm and then justify its correctness and running time.
Let $G = (V,E)$ be the input graph.

The algorithm performs the following steps:
\begin{enumerate}
\item Compute the hypergraph $\HH$ defined as follows:
the vertex set of $\HH$ is $V$, and the hyperedges of $\HH$ are precisely the minimal clique transversals $X$ of $G$ such that $|X|< k$.
\item Compute the co-occurrence graph $G(\HH^d)$ of the dual hypergraph of $\HH$.
\item Check if $G\neq G(\HH^d)$.
\item If $G\neq G(\HH^d)$, then the algorithm determines that $\tau_c^+(G) \ge k$ (that is, $G$ is a yes-instance) and halts.
\item If $G= G(\HH^d)$, then apply \Cref{RTC-poly-time} on $\HH$ to test if $\HH^d$ is conformal.
\begin{itemize}
\item If $\HH^d$ is conformal, then the algorithm determines that $\tau_c^+(G) < k$ (that is, $G$ is a no-instance) and halts.
\item If $\HH^d$ is not conformal, then the algorithm determines that $\tau_c^+(G) \ge k$ (that is, $G$ is a yes-instance) and halts.
\end{itemize}
\end{enumerate}

To prove correctness, let us first justify that, in the case when $G= G(\HH^d)$, we can indeed apply \Cref{RTC-poly-time} on $\HH$ to test if $\HH^d$ is conformal.
By the definition of the hypergraph $\HH$, every maximal clique of $G$ intersects every hyperedge of $\HH$.
Thus, if $G= G(\HH^d)$, then every maximal clique of $G(\HH^d)$ is a transversal of $\HH$.
This means that $\HH$ is indeed a valid input to the {\sc Restricted Dual Conformality} problem, and hence  \Cref{RTC-poly-time} applies, as claimed.
Furthermore, by \Cref{lem:clique-transversals}, we have $\tau_c^+(G)< k$ if and only if $\HH$ is dually conformal and $G = G(\HH^d)$.
Equivalently, $\tau_c^+(G) \ge k$ if and only if one of the following conditions holds: either (i) $G \neq G(\HH^d)$ or (ii) $G = G(\HH^d)$ and $\HH^d$ is not conformal.
This implies that each of the three outputs of the algorithm is correct.

It remains to analyze the time complexity.
We compute the hypergraph $\HH$ in time $\mathcal{O}(|V|^{2k-1})$ by enumerating all the $\mathcal{O}(|V|^{k-1})$ subsets $X$ of $V$ with size less than $k$ and checking, for each such set $X$, if $X$ is a minimal clique transversal of $G$, in time $\mathcal{O}(|V|^{|X|}) =\mathcal{O}(|V|^{k-1})$ using~\Cref{cor:constant-minimal}.
Note that $\HH$ has $\mathcal{O}(|V|)$ vertices and $\mathcal{O}(|V|^{k-1})$ hyperedges.
Its dimension is at most $k-1$ and maximum degree $\Delta = \mathcal{O}(|V|^{k-2})$.
By \Cref{dual-co-occurrence-graph-in-poly-time-improved}, the co-occurrence graph of $\HH^d$ can be computed in time
$\mathcal{O}(k|E(\mathcal{H})|\Delta^2|V(\mathcal{H})|^{2}) = \mathcal{O}(k\cdot|V|^{k-1}\cdot |V|^{2(k-2)}\cdot |V|^{2})=\mathcal{O}(|V|^{3k-3})$.
The check whether $G$ and $G(\HH^d)$ are equal can be performed in time $\mathcal{O}(|V|+|E|)$ by comparing their adjacency lists.
Finally, testing conformality of $\HH^d$ in the case when the two graphs are the same can be done in time $\mathcal{O}(k|E(\mathcal{H})|\Delta^2|V(\mathcal{H})|^{2}) = \mathcal{O}(|V|^{3k-3})$ by \Cref{RTC-poly-time}.
As each of the remaining steps takes constant time, we conclude that the algorithm runs in time $\mathcal{O}(|V|^{3k-3})$.
\end{proof}
\end{sloppypar}

We close the section with a remark about the case $k = 2$.
Applying \Cref{poly-k-UCT} to this case shows that given a graph $G = (V,E)$, the \textsc{$2$-Upper Clique Transversal} problem is solvable in time $\mathcal{O}(|V|^{3})$.
However, the problem can be solved in linear time, as a consequence of the following characterization of graphs in which all minimal clique transversals have size one.

\begin{proposition}\label{lem:tau_c+=1}
Let $G$ be a graph. Then $\tau_c^+(G) = 1$ if and only if $G$ is complete.
\end{proposition}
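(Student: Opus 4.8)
The plan is to prove both implications of the biconditional $\tau_c^+(G) = 1 \iff G$ is complete, using the characterization of singleton clique transversals already established in \Cref{cor:universal}.

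First I would handle the easy direction. Suppose $G$ is complete. Then $G$ has a unique maximal clique, namely $V(G)$ itself, so every nonempty subset of $V(G)$ is a clique transversal. In particular, for any vertex $v$, the singleton $\{v\}$ is a clique transversal, and it is clearly minimal (the empty set is not a transversal since there is at least one maximal clique). Thus every minimal clique transversal has size one, giving $\tau_c^+(G) = 1$.

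For the converse, I would argue by contraposition: assuming $G$ is not complete, I exhibit a minimal clique transversal of size at least two, so $\tau_c^+(G) \ge 2 > 1$. Since $G$ is not complete, it has two non-adjacent vertices, and hence by \Cref{cor:universal} no vertex of $G$ is universal; equivalently, no singleton is a clique transversal. Now take any minimal clique transversal $T$ of $G$ (one exists because $V(G)$ itself is a clique transversal, and any transversal contains a minimal one). Since $|T| \ne 1$ and $T$ is nonempty, we get $|T| \ge 2$, whence $\tau_c^+(G) \ge 2$.

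The argument is essentially immediate once \Cref{cor:universal} is invoked, so there is no real obstacle; the only point requiring a little care is justifying that a minimal clique transversal exists at all. This follows because $V(G)$ is always a clique transversal (every maximal clique is a nonempty subset of $V(G)$ and thus meets it), and every clique transversal contains an inclusion-minimal one by finiteness. With this in hand both directions close cleanly, and the equivalence follows.
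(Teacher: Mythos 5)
Your forward direction is fine, but the converse has a genuine gap. You claim that since $G$ is not complete, ``by \Cref{cor:universal} no vertex of $G$ is universal.'' This inference is false: non-completeness only says that \emph{not all} vertices are universal, not that none is. The path $P_3$ with vertices $a,b,c$ and edges $ab,bc$ is a counterexample: $G$ is not complete, yet $b$ is universal, so $\{b\}$ is a minimal clique transversal of size one. Consequently your next step -- take \emph{any} minimal clique transversal $T$ and conclude $|T|\ge 2$ -- fails, since in $P_3$ one may take $T=\{b\}$. (Note also that to get $\tau_c^+(G)\ge 2$ you only need \emph{some} minimal clique transversal of size at least two, not all of them; the ``all'' version is genuinely false whenever a non-complete graph has a universal vertex, even though the proposition itself is true, e.g.\ $\{a,c\}$ is a minimal clique transversal of $P_3$.)

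The paper's proof repairs exactly this point. It lets $S$ be the set of universal vertices of $G$ (which by \Cref{cor:universal} is precisely the set of vertices whose singletons are clique transversals) and shows that $V\setminus S$ is a clique transversal: otherwise some maximal clique $C$ lies entirely in $S$; since $S$ is a clique and every maximal clique contains all universal vertices, this forces $C=S$ to be the unique maximal clique, making $G$ complete, a contradiction. A minimal clique transversal contained in $V\setminus S$ then cannot be a singleton, since its only vertex would have to be universal and hence lie in $S$; this yields $\tau_c^+(G)\ge 2$. To fix your argument you need this (or an equivalent) device for producing a minimal clique transversal that avoids the universal vertices, rather than appealing to an arbitrary one.
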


\begin{proof}
If $G$ is complete, then the only minimal clique transversals are the sets consisting of a single vertex.
Thus, $\tau_c^+(G) = 1$ in this case.

Assume now that $G$ is not complete.
Let $S$ be the set of universal vertices of $G$.
Note that by \Cref{cor:universal}, $S$ is precisely the set of vertices $v$ such that $\{v\}$ is a clique transversal.
We claim that $V\setminus S$ is a clique transversal.
Suppose this is not the case.
Then $G$ admits a maximal clique $C$ contained entirely in $S$.
Since $S$ is a clique, we have $C = S$.
However, since every maximal clique contains $C$, it follows that $S$ is the only maximal clique in $G$ and hence $G$ is complete, a contradiction.
This shows that $V\setminus S$ is a clique transversal, as claimed.
Thus, $V\setminus S$ contains a minimal clique transversal, and any such clique transversal is of size at least $2$, since otherwise its only vertex would belong to $S$.
Consequently, $\tau_c^+(G) \ge 2$.
\end{proof}

\section{Dually conformal hypergraphs with bounded dimension}\label{sec:bounded-dimension}

In this section we study dually conformal hypergraphs of bounded dimension.
Recall that, given a hypergraph $\HH$, the \emph{dimension} of $\HH$ is the maximum cardinality of a hyperedge in $\HH$.

By \Cref{lem:clique-transversals-characterization}, a Sperner hypergraph is dually conformal if and only if there exists a graph $G$ such that $\HH$ is the hypergraph of all minimal clique transversals of~$G$.
In the case when dimension is bounded by a positive integer $k$, we obtain a similar characterization, which in addition takes into account the upper clique transversal number of graphs.

\begin{proposition}\label{lem:clique-transversals-characterization-dimension}
For every hypergraph $\HH$ and positive integer $k$, the following statements are equivalent.
\begin{enumerate}
\item $\HH$ is a dually conformal Sperner hypergraph with dimension at most $k$.
\item There exists a graph $G$ with $\tau_c^+(G)\le k$ such that $\HH$ is the hypergraph of all minimal clique transversals of~$G$.
\end{enumerate}
\end{proposition}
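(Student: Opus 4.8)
The plan is to prove the equivalence by chaining together two results already established in the excerpt, namely \Cref{lem:clique-transversals-characterization} (which characterizes dually conformal Sperner hypergraphs as hypergraphs of all minimal clique transversals of some graph) and the dimension bookkeeping provided by the relationship between the dimension of $\HH$ and the upper clique transversal number $\tau_c^+$. The key observation is that when $\HH$ is the hypergraph of all minimal clique transversals of a graph $G$, the hyperedges of $\HH$ are exactly the minimal clique transversals of $G$, so the dimension $\dim(\HH) = \max_{e \in E(\HH)} |e|$ equals the maximum size of a minimal clique transversal, which is precisely $\tau_c^+(G)$ by definition. Thus the dimension constraint $\dim(\HH) \le k$ translates directly into the constraint $\tau_c^+(G) \le k$.

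For the forward direction, I would assume $\HH$ is a dually conformal Sperner hypergraph with $\dim(\HH) \le k$. By \Cref{lem:clique-transversals-characterization}, there exists a graph $G$ such that $\HH$ is the hypergraph of all minimal clique transversals of $G$. Then the hyperedges of $\HH$ are exactly the minimal clique transversals of $G$, so $\tau_c^+(G) = \max_{e \in E(\HH)} |e| = \dim(\HH) \le k$, which gives the desired graph. For the converse, I would assume there is a graph $G$ with $\tau_c^+(G) \le k$ such that $\HH$ is the hypergraph of all minimal clique transversals of $G$. Applying \Cref{lem:clique-transversals-characterization} in the other direction immediately shows that $\HH$ is a dually conformal Sperner hypergraph, and again the identity $\dim(\HH) = \tau_c^+(G) \le k$ supplies the dimension bound.

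I do not expect any genuine obstacle here; the statement is essentially \Cref{lem:clique-transversals-characterization} augmented with a straightforward size count. The only point requiring mild care is to verify that $\tau_c^+(G)$ really equals $\dim(\HH)$ rather than merely bounding it in one direction. This follows because, under the correspondence of \Cref{lem:clique-transversals-characterization}, the set of hyperedges of $\HH$ coincides exactly with the set of \emph{all} minimal clique transversals of $G$ (not a proper subcollection), so the maximum hyperedge size is the maximum size of a minimal clique transversal, which is $\tau_c^+(G)$ by definition. Once this identification is in place, the dimension condition and the $\tau_c^+$ condition are literally the same condition, and the equivalence of the two statements reduces to \Cref{lem:clique-transversals-characterization} with no further work.
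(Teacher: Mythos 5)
Your proof is correct and matches the paper's approach: the paper in fact omits this proof, remarking only that it is ``very similar to the proof of \Cref{lem:clique-transversals-characterization}.'' Your argument---invoking \Cref{lem:clique-transversals-characterization} as a black box and adding the observation that $\dim(\HH)=\tau_c^+(G)$ because the hyperedges of $\HH$ are exactly \emph{all} minimal clique transversals of $G$---is precisely the straightforward augmentation the authors had in mind, including the one point of mild care (equality rather than a one-sided bound) that you correctly identify and justify.
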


The proof of this proposition is very similar to the proof of \Cref{lem:clique-transversals-characterization}, so we omit it.

\medskip
For a positive integer $k$, we are interested in the complexity of the following problem.
\begin{center}
\fbox{\parbox{0.85\linewidth}{\noindent
\textsc{Dimension-$k$ Dual Conformality}\\[.8ex]
\begin{tabular*}{.93\textwidth}{rl}
{\em Input:} & A hypergraph $\HH$ with dimension at most $k$.\\
{\em Question:} & Is the dual hypergraph $\HH^d$ conformal?
\end{tabular*}
}}
\end{center}

\begin{sloppypar}
In this section we develop a polynomial-time algorithm for \textsc{Dimension-$k$ Dual Conformality} for any fixed positive integer $k$.
For the cases $k\in \{2,3\}$, we also develop more direct algorithms.
\end{sloppypar}

\subsection{The general case}

We start with a technical lemma.

\begin{sloppypar}
\begin{lemma}\label{condition (a) is easy for bounded dimension}
For every positive integer $k$, there exists an algorithm running in time \hbox{$\mathcal{O}(|E|\Delta^2|V|^{2}+|E||V|^{k})$} that takes as input a hypergraph $\HH = (V,E)$ with dimension at most~$k$ and maximum degree~$\gD$ and tests whether $G=G(\HH^d)$ contains a maximal clique $C$ that is not a transversal of $\HH$.
\end{lemma}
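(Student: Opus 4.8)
The plan is to reduce condition~(a) to a family of clique-transversal tests, one per hyperedge, exploiting the fact that under the bounded-dimension hypothesis every hyperedge has at most $k$ vertices. First I would compute the co-occurrence graph $G = G(\HH^d)$ using \Cref{dual-co-occurrence-graph-in-poly-time-improved}; since the dimension is the fixed constant $k$, this takes time $\mathcal{O}(k|E|\Delta^2|V|^2) = \mathcal{O}(|E|\Delta^2|V|^2)$, which accounts for the first term in the claimed running time.

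The key observation is a reformulation of condition~(a) in terms of clique transversals of $G$. A maximal clique $C$ of $G$ fails to be a transversal of $\HH$ precisely when there is some hyperedge $e \in E$ with $C \cap e = \emptyset$. Swapping the order of quantification, $G$ contains a maximal clique that is not a transversal of $\HH$ if and only if there is a hyperedge $e \in E$ such that some maximal clique of $G$ is disjoint from $e$; by the definition of a clique transversal, this is equivalent to saying that $e$, viewed as a vertex subset of $G$, is \emph{not} a clique transversal of $G$. Thus condition~(a) holds if and only if at least one hyperedge of $\HH$ fails to be a clique transversal of $G$. (Here I use that $e \subseteq V = V(G)$, so that $e$ is indeed a set of vertices of $G$.)

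This reformulation is what makes the bounded-dimension assumption bite: each hyperedge $e$ satisfies $|e| \le \dim(\HH) \le k$, so \Cref{cor:constant} allows me to test whether $e$ is a clique transversal of $G$ in time $\mathcal{O}(|V|^k)$. I would therefore iterate over all $|E|$ hyperedges, run this test on each (reusing a single adjacency matrix of $G$ computed once in $\mathcal{O}(|V|^2)$ time), and report that $G$ contains a maximal clique not a transversal of $\HH$ exactly when some test returns ``not a clique transversal''. This loop costs $\mathcal{O}(|E||V|^k)$, yielding the second term and hence the overall bound $\mathcal{O}(|E|\Delta^2|V|^2 + |E||V|^k)$.

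I do not expect a serious obstacle: the entire content lies in the quantifier-swap reformulation, which converts a seemingly global search over the (potentially exponentially many) maximal cliques of $G$ into $|E|$ local clique-transversal tests on small sets. The only points requiring care are the correctness of the equivalence ``$e$ is not a clique transversal of $G$ iff some maximal clique of $G$ is disjoint from $e$'', which is immediate from the definition, and the bookkeeping that $\dim(\HH)\le k$ guarantees $|e|\le k$ so that \Cref{cor:constant} is applicable to every hyperedge.
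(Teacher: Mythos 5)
Your proposal is correct and matches the paper's proof essentially verbatim: both compute $G(\HH^d)$ via \Cref{dual-co-occurrence-graph-in-poly-time-improved}, observe that condition~(a) fails precisely when every hyperedge of $\HH$ is a clique transversal of $G$ (your quantifier swap is the same equivalence the paper states), and then apply \Cref{cor:constant} to each of the $|E|$ hyperedges of size at most $k$ in time $\mathcal{O}(|V|^k)$ each. Your remark about computing the adjacency matrix of $G$ once rather than per test is a harmless minor optimization that does not change the analysis.
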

\end{sloppypar}

\begin{proof}
By \Cref{dual-co-occurrence-graph-in-poly-time-improved}, the graph $G$ can be computed in time $\mathcal{O}(k|E|\Delta^2|V|^{2})$, which is $\mathcal{O}(|E|\Delta^2|V|^{2})$ since the dimension is constant.
We show the existence of an algorithm with the stated running time that tests the negation of the stated condition, namely whether every maximal clique of $G$ is a transversal of $\HH$.
This condition is equivalent to the condition that every hyperedge of $\HH$ is a clique transversal in $G$.
Since each hyperedge $e$ of $\HH$ has size at most $k$, by \Cref{cor:constant} it can be tested in time $\mathcal{O}(|V|^{k})$ whether $e$ is a clique transversal in $G$.
Hence, the total running time of the described algorithm is
$\mathcal{O}(|E|\Delta^2|V|^{2}+|E||V|^{k})$.
\end{proof}

\begin{sloppypar}
\begin{theorem}\label{DkTC}
For every positive integer $k$, given a hypergraph $\HH = (V,E)$ with dimension at most $k$ and maximum degree~$\gD$, the \textsc{Dimension-$k$ Dual Conformality} problem is solvable in time \hbox{$\mathcal{O}(|E||V|^{2}\Delta^2+|E||V|^{k})$}.
\end{theorem}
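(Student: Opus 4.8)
The plan is to exploit the dichotomy established in \Cref{no instances characterization 1}: writing $G = G(\HH^d)$, the hypergraph $\HH$ fails to be dually conformal if and only if one of two conditions holds, namely (a) $G$ has a maximal clique that is not a transversal of $\HH$, or (b) $G$ has a maximal clique that is a transversal but not a minimal one. The strategy is to test these two conditions in succession, using the fact that the two technical results proved just before this theorem are tailored precisely to the two conditions.

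First I would compute the co-occurrence graph $G = G(\HH^d)$ via \Cref{dual-co-occurrence-graph-in-poly-time-improved} in time $\mathcal{O}(k|E|\Delta^2|V|^2)$, which is $\mathcal{O}(|E|\Delta^2|V|^2)$ since $k$ is a fixed constant. Next I would test condition~(a) using the algorithm of \Cref{condition (a) is easy for bounded dimension}, which runs in time $\mathcal{O}(|E|\Delta^2|V|^2 + |E||V|^k)$. If condition~(a) holds, then by \Cref{no instances characterization 1} the hypergraph $\HH$ is not dually conformal, so $\HH^d$ is not conformal, and the algorithm halts with this answer.

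The key observation is what happens when condition~(a) fails: in that case every maximal clique of $G$ is a transversal of $\HH$, which is exactly the hypothesis under which the \textsc{Restricted Dual Conformality} problem is defined. I would therefore apply \Cref{RTC-poly-time} to $\HH$ to decide whether $\HH^d$ is conformal; this runs in time $\mathcal{O}(k|E|\Delta^2|V|^2) = \mathcal{O}(|E|\Delta^2|V|^2)$. Correctness of this step is guaranteed by \Cref{characterization-of-no-instances-for-RTC}, which reduces the detection of condition~(b) to searching for a vertex $v$ of $G$ with $N_G(v)$ a transversal of $\HH$. Adding the running times of the three steps yields the claimed bound $\mathcal{O}(|E||V|^2\Delta^2 + |E||V|^k)$, since the $|E||V|^k$ term arising from testing condition~(a) absorbs the remaining contributions.

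Since all the heavy lifting is carried out by the preceding lemmas, there is no serious obstacle here; the only points requiring care are verifying that the failure of condition~(a) places $\HH$ exactly into the admissible input class for \Cref{RTC-poly-time}, and checking that the individual running-time bounds combine into the stated total without an extra factor of $k$ surviving (which they do not, as $k$ is constant).
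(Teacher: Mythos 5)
Your proposal is correct and follows essentially the same route as the paper's own proof: test condition~(a) of \Cref{no instances characterization 1} via \Cref{condition (a) is easy for bounded dimension}, and if it fails, observe that $\HH$ is then a valid input for \textsc{Restricted Dual Conformality} and invoke \Cref{RTC-poly-time}, with the same running-time bookkeeping. The only (harmless) difference is that you compute $G(\HH^d)$ as an explicit separate first step, whereas the paper lets the two invoked lemmas handle that internally.
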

\end{sloppypar}

\begin{sloppypar}
\begin{proof}
We make use of the characterization of dually conformal hypergraphs given by \Cref{no instances characterization 1}.
First we test condition (a) in time \hbox{$\mathcal{O}(|E||V|^{2}\Delta^2+|E||V|^{k})$} using \Cref{condition (a) is easy for bounded dimension}.
If condition (a) holds, we conclude that $\HH$ is not dually conformal.
If the condition does not hold, then every maximal clique of the graph $G = G(\HH^d)$ is a transversal of $\HH$, which means that $\HH$ is a valid input for the \textsc{Restricted Dual Conformality} problem.
In this case, we test dual conformality of $\HH$ in time $\mathcal{O}(k|E|\Delta^2|V|^{2})$ using \Cref{RTC-poly-time}.
Since $k$ is constant, the complexity simplifies to $\mathcal{O}(|E|\Delta^2|V|^{2})$.
\end{proof}
\end{sloppypar}

\subsection{The case of dimension three}

The case $k = 3$ of~\Cref{DkTC} is as follows.

\begin{sloppypar}
\begin{theorem}\label{D3TC}
Given a hypergraph $\HH = (V,E)$ with dimension at most $3$ and maximum degree $\Delta$, the \textsc{Dimension-$3$ Dual Conformality} problem is solvable in time \hbox{$\mathcal{O}(|E||V|^{2}\Delta^2+|E||V|^{3})$}.
\end{theorem}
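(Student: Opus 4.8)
The plan is to observe that \Cref{D3TC} is nothing more than the specialization of \Cref{DkTC} to $k = 3$: the hypothesis (dimension at most $3$, maximum degree $\Delta$) and the stated running time $\mathcal{O}(|E||V|^{2}\Delta^2 + |E||V|^{3})$ coincide exactly with the general bound $\mathcal{O}(|E||V|^{2}\Delta^2 + |E||V|^{k})$ evaluated at $k = 3$. Hence the shortest route is simply to invoke \Cref{DkTC} with $k = 3$, and no additional argument is strictly required. I would present this as the primary proof.

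It is still worth recalling why the generic algorithm performs well in this regime, both to make the one-line deduction transparent and to set up the alternative, more self-contained treatment of the dimension-three case. The algorithm behind \Cref{DkTC} splits the decision according to \Cref{no instances characterization 1}. First it tests condition~(a), namely whether the co-occurrence graph $G = G(\HH^d)$ has a maximal clique that is not a transversal of $\HH$; by \Cref{condition (a) is easy for bounded dimension} this reduces to checking, for each of the $\mathcal{O}(|E|)$ hyperedges of $\HH$ (each of size at most $3$), whether it is a clique transversal of $G$, which is polynomial for fixed dimension. If condition~(a) fails, then every maximal clique of $G$ is a transversal of $\HH$, so $\HH$ is a legitimate instance of \textsc{Restricted Dual Conformality}, and one concludes by applying \Cref{RTC-poly-time}, whose correctness rests on \Cref{characterization-of-no-instances-for-RTC}. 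Computing $G(\HH^d)$ via \Cref{dual-co-occurrence-graph-in-poly-time-improved} together with these clique-transversal tests yields the claimed bound.

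Because this derivation is immediate, the statement as written presents no genuine obstacle; the real substance of the dimension-three case lies in replacing the two-phase procedure above by a single reduction to \textsc{$2$-Satisfiability}, which is the approach I would develop next. The idea I would pursue is to encode the search for a clique of $G$ that witnesses conditions~(a) or~(b) of \Cref{no instances characterization 1} as a Boolean formula, exploiting the fact that, when every hyperedge of $\HH$ has size at most $3$, the incompatibilities that forbid a set of vertices from extending a minimal transversal can be expressed through constraints on \emph{pairs} of vertices rather than larger tuples. The delicate point — and the step I expect to require the most care — is to argue that a conformality failure can always be certified by such a witness whose defining constraints are genuinely binary, so that the resulting instance is truly one of \textsc{$2$-Satisfiability} and not of a higher-arity constraint-satisfaction problem; establishing this reduction, and verifying that it respects the claimed running time, is where the work of the dimension-three case actually resides.
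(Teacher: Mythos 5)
Your primary proof is exactly the paper's: \Cref{D3TC} is stated there as the case $k=3$ of \Cref{DkTC}, with no further argument, and your one-line specialization (together with the recap of how \Cref{condition (a) is easy for bounded dimension} and \Cref{RTC-poly-time} combine) matches it. Your concluding sketch of a \textsc{$2$-Satisfiability} reduction is likewise the route the paper then develops as an \emph{alternative} proof (via \Cref{no instances characterization 2}, where the key point is precisely that $|e_i\cap N_G(v)|\le 2$ keeps all clauses binary), so nothing in your proposal diverges from the paper in substance.
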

\end{sloppypar}

We now develop an alternative approach for recognizing dually conformal hypergraphs within the family of hypergraphs of dimension at most three, based on a reduction to {\sc $2$-Satisfiability}.
The running time of this algorithm matches that of~\Cref{DkTC}.

Recall that \Cref{no instances characterization 1} gives two possible reasons why $\HH$ could fail to be dually conformal.
A similar characterization is as follows.

\begin{lemma}\label{no instances characterization 2}
Let $\HH$ be a hypergraph and let $G = G(\HH^d)$.
Then $\HH$ is not dually conformal if and only if one of the following two conditions holds.
\begin{enumerate}[(a)]
\item $G$ contains a maximal clique $C$ that is not a transversal of $\HH$, or
\item $G$ contains a clique $C$ and a vertex $v\in C$ such that for each hyperedge $e\in E(\HH)$ that contains $v$ we have $|C\cap e|\geq 2$.
\end{enumerate}
\end{lemma}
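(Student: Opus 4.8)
The plan is to establish the equivalence in \Cref{no instances characterization 2} by relating its condition (b) to condition (b) of \Cref{no instances characterization 1}, since both lemmas share the identical condition (a). Thus it suffices to show that, \emph{under the assumption that condition (a) fails} (i.e., every maximal clique of $G$ is a transversal of $\HH$), the condition (b) of \Cref{no instances characterization 2} is equivalent to the condition (b) of \Cref{no instances characterization 1}. Note that whenever condition (a) holds, both lemmas already declare $\HH$ not dually conformal, so the only content lies in the regime where condition (a) fails, which is precisely the setting of \Cref{characterization-of-no-instances-for-RTC}. I would therefore aim to show that, assuming every maximal clique of $G$ is a transversal of $\HH$, the new condition (b) holds if and only if there exists a vertex $v$ of $G$ such that $N_G(v)$ is a transversal of $\HH$; the latter is exactly the criterion from \Cref{characterization-of-no-instances-for-RTC}.

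For the forward direction, I would assume $\HH$ is not dually conformal while condition (a) fails, and invoke \Cref{characterization-of-no-instances-for-RTC} to obtain a vertex $v$ with $N_G(v)$ a transversal of $\HH$. I then take a minimal transversal $T\subseteq N_G(v)$, which by \Cref{hyperedges are cliques in co-occurrence graph-corollary} is a clique in $G$, so that $C := T\cup\{v\}$ is a clique in $G$ containing $v$. The goal is to verify the counting condition: for every hyperedge $e\ni v$, we need $|C\cap e|\ge 2$. Since $T$ is a transversal of $\HH$, any such $e$ meets $T$ in some vertex $u$, and because $T\subseteq N_G(v)$ the vertex $u$ is distinct from $v$; hence $\{v,u\}\subseteq C\cap e$, giving $|C\cap e|\ge 2$. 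This yields the clique $C$ and vertex $v$ as required.

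For the converse, I would assume the existence of a clique $C$ and a vertex $v\in C$ with the stated property, and show that $N_G(v)$ is a transversal of $\HH$, so that \Cref{characterization-of-no-instances-for-RTC} gives that $\HH$ is not dually conformal (again under the failure of condition (a)). Set $T := C\setminus\{v\}$; since $C$ is a clique in $G$ containing $v$, we have $T\subseteq N_G(v)$, so it suffices to prove that $T$ is a transversal of $\HH$. Take any hyperedge $e\in E(\HH)$. If $v\in e$, then by hypothesis $|C\cap e|\ge 2$, so $e$ meets $C$ in a vertex other than $v$, i.e.\ $e\cap T\neq\emptyset$. If $v\notin e$, I would need $e$ to meet $T$ as well; here I expect to use that $v$ lies in \emph{some} minimal transversal (so $v$ is not a ``dummy'' vertex) together with the structure of $G=G(\HH^d)$, and I anticipate this case being the main obstacle, since the counting condition (b) only directly constrains hyperedges through $v$. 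The cleanest route is likely to argue that every hyperedge through $v$ meeting $C$ twice forces $T\supseteq$ a transversal, and to handle hyperedges avoiding $v$ by the overall equivalence already packaged in \Cref{characterization-of-no-instances-for-RTC}; carefully organizing which direction genuinely needs the dimension-$3$ hypothesis (if any) versus which holds in general is the delicate bookkeeping I would complete with care.

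Finally, I would assemble the two directions: condition (a) of \Cref{no instances characterization 2} holding already certifies non-dual-conformality via condition (a) of \Cref{no instances characterization 1}; and when condition (a) fails, I have shown condition (b) of \Cref{no instances characterization 2} is equivalent to the neighborhood-transversal criterion of \Cref{characterization-of-no-instances-for-RTC}, which in turn is equivalent to $\HH$ being not dually conformal. Combining these, $\HH$ is not dually conformal if and only if (a) or (b) holds, completing the proof.
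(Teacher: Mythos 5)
Your reduction to the regime where condition (a) fails, and your forward direction, are both sound: going through \Cref{characterization-of-no-instances-for-RTC}, taking a minimal transversal $T\subseteq N_G(v)$ (a clique by \Cref{hyperedges are cliques in co-occurrence graph-corollary}) and forming $C=T\cup\{v\}$, is a valid variant of the paper's argument, which instead passes directly from a maximal clique that is a transversal but not a minimal one. The genuine gap is in your converse. You set $T=C\setminus\{v\}$ and try to prove that $T$ itself is a transversal of $\HH$, but that claim is false in general: condition (b) constrains only the hyperedges containing $v$, and the clique $C$ may be far too small to meet hyperedges avoiding $v$ (for instance, $C$ could be just $v$ and one neighbor while $\HH$ has many hyperedges disjoint from $C$). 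Your proposed escape --- handling hyperedges avoiding $v$ ``by the overall equivalence already packaged in \Cref{characterization-of-no-instances-for-RTC}'' --- is circular, since the neighborhood-transversal criterion of that lemma is precisely the statement you are in the middle of verifying. The worry about a dimension-$3$ hypothesis is also a red herring: the lemma holds for arbitrary hypergraphs, and dimension $3$ enters only later, in the \textsc{$2$-Satisfiability} algorithm for \emph{testing} condition (b).

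The missing idea, and the paper's actual move, is to extend $C$ to a \emph{maximal} clique $C^*$ of $G$ and then use the standing assumption that condition (a) fails, i.e., that every maximal clique of $G$ is a transversal of $\HH$. Then $C^*\setminus\{v\}$ is a transversal: if $e\in E(\HH)$ contains $v$, then $|C\cap e|\ge 2$ forces $e$ to meet $C\setminus\{v\}\subseteq C^*\setminus\{v\}$; and if $v\notin e$, then $e$ meets the transversal $C^*$, and since $v\notin e$ it meets $C^*\setminus\{v\}$. Hence $C^*$ is a maximal clique that is a transversal but not a minimal one, which is condition (b) of \Cref{no instances characterization 1}; equivalently, $C^*\setminus\{v\}\subseteq N_G(v)$ shows $N_G(v)$ is a transversal, feeding the criterion of \Cref{characterization-of-no-instances-for-RTC} that your architecture relies on. Replacing your $T=C\setminus\{v\}$ step with this maximal-clique extension closes the proof; without it, the converse does not go through.
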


\begin{sloppypar}
\begin{proof}
By \Cref{no instances characterization 1}, the equivalence holds if $G$ contains a maximal clique $C$ that is not a transversal of $\HH$.

Suppose now that every maximal clique of $G$ is a transversal of $\HH$.
In this case, by \Cref{no instances characterization 1}, it suffices to show that $G$ contains a maximal clique $C$ that is a transversal of $\HH$ but not a minimal one if and only if $G$ contains a clique $C'$ and a vertex $v\in C'$ such that for all hyperedges $e\in E(\HH)$ that contain $v$ we have $|C'\cap e|\geq 2$.
Suppose first that $G$ contains a maximal clique $C$ that is a transversal of $\HH$ but not a minimal one.
Then there exists a vertex $v\in C$ such that $C\setminus\{v\}$ is a transversal of $\HH$.
In particular, this implies that for all hyperedges $e\in E(\HH)$ that contain $v$ we have $|C\cap e|\geq 2$.

For the converse direction, suppose that $G$ contains a clique $C'$ and a vertex $v\in C'$ such that for each hyperedge $e\in E(\HH)$ that contains $v$ we have $|C'\cap e|\geq 2$.
Let $C$ be a maximal clique in $G$ such that $C'\subseteq C$.
We claim that $C$ is a transversal of $\HH$ but not a minimal one.
The fact that $C$ is a transversal of $\HH$ follows from the assumption that  every maximal clique of $G$ is a transversal of $\HH$.
Furthermore, $C$ is not a minimal transversal since $C\setminus\{v\}$ is a transversal of $\HH$. To see this, consider an arbitrary hyperedge $e\in E(\HH)$.
\begin{itemize}
\item If $v\in e$, then $|C'\cap e|\geq 2$ and hence $|(C\setminus\{v\})\cap e|\ge |(C'\setminus\{v\})\cap e|\ge 1$.
\item If $v\not\in e$, then $(C\setminus\{v\})\cap e = C\cap e$, and $C\cap e\neq\emptyset$ since $C$ is a transversal of $\HH$.
\end{itemize}
Thus, in either case, $C\setminus \{v\}$ intersects $e$.
It follows that $C\setminus\{v\}$ is a transversal of $\HH$, as claimed.
\end{proof}
\end{sloppypar}

Let us now discuss the time complexity of verifying the two conditions from \Cref{no instances characterization 2}.
Recall that condition (a) can be tested in polynomial time for any bounded dimension using \Cref{condition (a) is easy for bounded dimension}.
Next we show that for hypergraphs with dimension at most three, condition (b) can be tested in polynomial time using a reduction to \textsc{$2$-Satisfiability}, a well-known problem solvable in linear time (see Aspvall, Plass, and Tarjan~\cite{MR526451}).

\begin{lemma}\label{condition (c) is easy for dimension at most 3}
There exists an algorithm running in time $\mathcal{O}(|E||V|^{2}\Delta^2+|V|^{3})$ that tests whether for a given hypergraph $\HH = (V,E)$ with dimension at most $3$ and maximum degree $\Delta$, the graph $G = G(\HH^d)$ contains a clique $C$ and a vertex $v\in C$ such that for each hyperedge $e\in E$ that contains $v$ it holds $|C\cap e|\geq 2$.
\end{lemma}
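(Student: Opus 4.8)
The plan is to reduce the problem, for each candidate vertex $v$, to an instance of \textsc{$2$-Satisfiability}, thereby obtaining $|V|$ instances in total. First I would compute $G = G(\HH^d)$ via \Cref{dual-co-occurrence-graph-in-poly-time-improved}, which for dimension at most $3$ costs $\mathcal{O}(|E|\Delta^2|V|^{2})$. Since the vertex $v$ in the statement is existentially quantified and ranges over only $|V|$ choices, I would then loop over all $v \in V$ and, for each, decide whether there is a clique $C \ni v$ of $G$ with $|C \cap e| \ge 2$ for every hyperedge $e \in E$ containing $v$.

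For a fixed $v$, introduce a Boolean variable $x_u$ for each $u \in V$, to be read as ``$u \in C$''. The requirement that $C$ be a clique of $G$ is captured by the clause $\neg x_u \lor \neg x_w$ for every \emph{non}-adjacent pair $\{u,w\}$ in $G$, and the requirement $v \in C$ by the unit clause $x_v$. The crux is encoding ``$|C \cap e| \ge 2$ for every $e \ni v$'': since $v \in C \cap e$, this asks that $C$ contain at least one vertex of $e \setminus \{v\}$, and here the hypothesis $\dim(\HH) \le 3$ is decisive, because $|e \setminus \{v\}| \le 2$. Thus a hyperedge $e = \{v,w\}$ contributes the unit clause $x_w$, a hyperedge $e = \{v,w_1,w_2\}$ contributes $x_{w_1} \lor x_{w_2}$, and a singleton $e = \{v\}$ can never be satisfied (so such a $v$ is simply discarded). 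The conjunction of all these clauses is a $2$-CNF formula whose satisfying assignments correspond exactly, via $C = \{u : x_u = \textrm{true}\}$, to the cliques $C$ through $v$ with the desired property: the non-edge clauses force $C$ to be a clique, $x_v$ forces $v \in C$, and each hyperedge clause forces a second vertex of $e$ into $C$; conversely any such clique yields a satisfying assignment through its indicator. Hence the condition of the lemma holds if and only if some instance, over $v \in V$, is satisfiable.

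For the running time, each instance has $|V|$ variables and $\mathcal{O}(|V|^{2})$ clauses, the non-edge constraints being dominant (the hyperedges through $v$ of size at most three also number $\mathcal{O}(|V|^2)$), so by the linear-time $2$-SAT algorithm of Aspvall, Plass, and Tarjan~\cite{MR526451} each is solved in $\mathcal{O}(|V|^{2})$ time, for $\mathcal{O}(|V|^{3})$ over all $v$; adding the cost of building $G$ yields $\mathcal{O}(|E||V|^{2}\Delta^2 + |V|^{3})$, as claimed. I do not anticipate a real obstacle: the reduction is direct, and the only delicate point is the clause-width bound. The essential observation — and precisely why dimension $3$ rather than more is needed — is that removing the fixed vertex $v$ shrinks each hyperedge to a set of size at most two, keeping every hyperedge constraint within the $2$-clause budget; already for dimension $4$ the same encoding would produce $3$-clauses and leave the reach of \textsc{$2$-Satisfiability}.
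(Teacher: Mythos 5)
Your proposal is correct and follows essentially the same route as the paper: compute $G(\HH^d)$ via \Cref{dual-co-occurrence-graph-in-poly-time-improved}, then for each vertex $v$ reduce the existence of the desired clique to a \textsc{$2$-Satisfiability} instance, with the dimension-$3$ hypothesis guaranteeing that each hyperedge clause has width at most two. The only (cosmetic) difference is that you use variables for all of $V$ together with the unit clause $x_v$, whereas the paper restricts the variables to $N_G(v)$ and discards $v$ when some intersection $e_i\cap N_G(v)$ is empty; your non-adjacency clauses involving $x_v$ enforce exactly the same restriction.
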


\begin{sloppypar}
\begin{proof}
By \Cref{dual-co-occurrence-graph-in-poly-time-improved} the co-occurrence graph $G=G(\HH^d)$ of $\HH^d$ can be constructed in time $\mathcal{O}(|E|\Delta^2|V|^{2})$.
We develop a polynomial-time algorithm to test, given a vertex $v$ of $G$, whether $G$ contains a clique $C$ such that $v\in C$ and for each hyperedge $e\in E$ that contains $v$ we have $|C\cap e|\geq 2$.
Let $e_1,\ldots, e_\ell$ be the hyperedges of $\HH$ that contain $v$.
We need to decide if there is a clique $K$ in $G$ such that
$K\subseteq N_G(v)$ and $K\cap e_i\neq\emptyset$ for all $i\in \{1,\ldots,\ell\}$.

For each $i\in \{1,\ldots,\ell\}$, we compute in time $\mathcal{O}(|V|)$ the intersection $e_i\cap N_G(v)$.
If \hbox{$e_i\cap N_G(v) = \emptyset$} for some $i\in  \{1,\ldots,\ell\}$, then the desired clique $K$ does not exist.
So let us assume that $e_i\cap N_G(v)\neq \emptyset$ for all $i\in  \{1,\ldots,\ell\}$.
In this case we determine the existence of a desired clique $K$ by solving the following instance of \textsc{$2$-Satisfiability}:
\begin{itemize}
\item For each vertex $u\in N_G(v)$ there is one variable $x_u$ (with the intended meaning that $x_u$ takes value true in a satisfying assignment if and only if $u\in K$).
\item For every two distinct non-adjacent vertices $u,w\in N_G(v)$, we introduce the clause $\neg x_u\vee \neg x_w$ (specifying that not both $u$ and $w$ can be selected in the clique $K$).

Furthermore, for every $i\in \{1,\ldots,\ell\}$, we introduce the clause $\bigvee_{u\in e_i\cap N_G(v)}x_u$ (specifying that at least one of the vertices in $e_i\cap N_G(v)$ should belong to $K$).
\end{itemize}

Note that for each $i\in  \{1,\ldots,\ell\}$, we have $v\in e_i$ and $|e_i|\le 3$ since $\HH$ has dimension at most~$3$.
Consequently, $|e_i\cap N_G(v)|\le |e_i\setminus\{v\}|\le 2$ and hence all the clauses have length one or two.
The instance of \textsc{$2$-Satisfiability} is constructed so that there is a clique $K$ in $G$ such that
$K\subseteq N_G(v)$ and $K\cap e_i\neq\emptyset$ for all $i\in \{1,\ldots,\ell\}$ if and only if the conjunction of all the clauses has a satisfying assignment.
There are $\mathcal{O}(\Delta)$ intersections $e_i\cap N_G(v)$, $i\in \{1,\ldots,\ell\}$, which can be computed in time $\mathcal{O}(\Delta|V|)$.
There are $\mathcal{O}(|V|)$ variables and $\mathcal{O}(|V|^2+\Delta)$ clauses, hence this is a polynomial-time reduction to the linear-time solvable \textsc{$2$-Satisfiability} problem.
We solve an instance of \textsc{$2$-Satisfiability} for each vertex $v$ of $G$, and hence the time complexity of this part of the algorithm is $\mathcal{O}(|V|(\Delta|V|+|V|+|V|^2+\Delta)) = \mathcal{O}(|V|^2(|V|+\Delta))$, resulting in the total running time of $\mathcal{O}(|E||V|^{2}\Delta^2+|V|^{3})$, as claimed.
\end{proof}
\end{sloppypar}

\Cref{no instances characterization 2,condition (a) is easy for bounded dimension,condition (c) is easy for dimension at most 3}
provide an alternative proof of \Cref{D3TC}.

\subsection{The two-uniform case}

In this section we analyze in more detail the case $k = 2$ of~\Cref{DkTC}, that is, the case of $2$-uniform hypergraphs.
Note that in this case we are dealing simply with graphs without isolated vertices; in particular, we shall also use the standard graph theory terminology and notation.

In the case $k = 2$, the characterization of dually conformal hypergraphs given by \Cref{no instances characterization 2} can be simplified as follows.

\begin{lemma}\label{lem:2-uniform-dually-conformal}
Let $\HH$ be a $2$-uniform hypergraph and let $G = G(\HH^d)$.
Then $\HH$ is not dually conformal if and only if one of the following two conditions holds.
\begin{enumerate}[(a)]
\item $G$ contains a maximal clique $C$ that is not a transversal of $\HH$, or
\item $G$ contains a vertex $v$ such that the closed neighborhood of $v$ in $\HH$ is a clique in $G$.
\end{enumerate}
\end{lemma}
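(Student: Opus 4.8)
The plan is to deduce this directly from Lemma~\ref{no instances characterization 2}, whose statement holds for arbitrary hypergraphs and in particular for $2$-uniform ones. Since condition~(a) is literally the same in both statements, it suffices to prove that, under the hypothesis that $\HH$ is $2$-uniform, condition~(b) of Lemma~\ref{no instances characterization 2} is equivalent to condition~(b) above. I would then state explicitly that condition~(a) is carried over verbatim, so that the equivalence of the two full characterizations follows.

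First I would translate condition~(b) of Lemma~\ref{no instances characterization 2} into graph-theoretic language. Viewing the $2$-uniform hypergraph $\HH$ as a graph on $V(\HH) = V(G)$ (recall $V(G) = V(\HH^d) = V(\HH)$, so closed neighborhoods in $\HH$ are meaningful for vertices of $G$), the hyperedges of $\HH$ containing a vertex $v$ are exactly the edges $\{v,u\}$ with $u \in N_\HH(v)$. For such a hyperedge $e = \{v,u\}$ of size $2$, and assuming $v \in C$, we have $C \cap e \subseteq \{v,u\}$ and $v \in C \cap e$, so the requirement $|C \cap e| \ge 2$ is equivalent to $u \in C$. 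Hence condition~(b) of Lemma~\ref{no instances characterization 2} asserts precisely the existence of a clique $C$ in $G$ and a vertex $v \in C$ with $N_\HH(v) \subseteq C$; together with $v \in C$, this says exactly $N_\HH[v] \subseteq C$.

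It then remains to show that the statement ``there is a clique $C$ in $G$ and a vertex $v$ with $N_\HH[v] \subseteq C$'' is equivalent to condition~(b) above, namely ``there is a vertex $v$ of $G$ whose closed neighborhood $N_\HH[v]$ is a clique in $G$.'' For the forward direction, any subset of a clique of $G$ is itself a clique of $G$, so $N_\HH[v] \subseteq C$ forces $N_\HH[v]$ to be a clique. For the converse, if $N_\HH[v]$ is a clique, I simply take $C = N_\HH[v]$, which is a clique of $G$ containing $v$ and satisfying $N_\HH[v] \subseteq C$.

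There is no genuinely hard step here: the whole content is careful bookkeeping of the translation, and the only subtle point is that in a $2$-uniform hypergraph the inequality $|C \cap e| \ge 2$, applied to a $2$-element hyperedge $e$ already containing $v \in C$, collapses to full containment of $e$ in $C$, i.e.\ to membership of the other endpoint of $e$ in $C$. I would make sure to spell this out, since it is exactly the place where the $2$-uniformity hypothesis is used.
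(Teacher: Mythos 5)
Your proof is correct and follows essentially the same route as the paper: both invoke \Cref{no instances characterization 2}, observe that $2$-uniformity collapses $|C\cap e|\geq 2$ to $e\subseteq C$ (equivalently, membership of the other endpoint in $C$), restate condition (b) as the existence of a clique $C$ containing $N_\HH[v]$, and close the equivalence by noting that subsets of cliques are cliques in one direction and taking $C = N_\HH[v]$ in the other. No gaps; your explicit bookkeeping of where $2$-uniformity enters matches the paper's argument.
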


\begin{proof}
By \Cref{no instances characterization 2}, it is sufficient to show that condition (b) from \Cref{lem:2-uniform-dually-conformal} is equivalent to condition (b) from \Cref{no instances characterization 2}.
For simplicity, let us refer to these two conditions as conditions (b$^*$) and (b), respectively.
Since $\HH$ is $2$-uniform, the inequality $|C\cap e|\geq 2$ in condition (b) is equivalent to the inclusion $e\subseteq C$.
Thus, condition (b) is equivalent to following condition:
$G$ contains a vertex $v$ and a clique $C$ such that $C$ contains $v$ as well as all hyperedges $e$ of $\HH$ that contain $v$.
In graph theoretic terms, this means that $G$ contains a vertex $v$ and a clique $C$ such that $C$ contains the closed neighborhood of $v$ in $\HH$.
If this condition is satisfied, then $N_\HH[v]$ is a clique in $G$, too, and condition (b$^*$) holds.
Conversely, if condition (b$^*$) holds and $v$ is a vertex in $G$ such that $N_\HH[v]$ is a clique in $G$, then we can take $C = N_\HH[v]$ and condition (b) is satisfied.
\end{proof}

Using \Cref{lem:2-uniform-dually-conformal} we now prove the announced result.

\begin{sloppypar}
\begin{theorem}\label{thm:2-uniform}
Given a $2$-uniform hypergraph $\HH = (V,E)$ with maximum degree $\Delta$, the \textsc{Dimension-$2$ Dual Conformality} problem is solvable in time \hbox{$\mathcal{O}(|E||V|^{2}\Delta^2)$}.
\end{theorem}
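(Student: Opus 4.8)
The plan is to use the characterization from \Cref{lem:2-uniform-dually-conformal}, which reduces the problem to testing its two conditions (a) and (b) on the co-occurrence graph $G = G(\HH^d)$. First I would compute $G$ itself. Since $\HH$ is $2$-uniform, its dimension is $k=2$, so by \Cref{dual-co-occurrence-graph-in-poly-time-improved} the graph $G$ can be built in time $\mathcal{O}(k|E|\Delta^2|V|^2) = \mathcal{O}(|E||V|^2\Delta^2)$. This dominant cost will set the overall running time, and the remaining work must be shown to fit within this budget.

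Next I would handle condition (a): does $G$ contain a maximal clique that is not a transversal of $\HH$? Here I would invoke \Cref{condition (a) is easy for bounded dimension}, which for dimension at most $k$ runs in time $\mathcal{O}(|E|\Delta^2|V|^2 + |E||V|^k)$. Specializing to $k=2$ gives $\mathcal{O}(|E|\Delta^2|V|^2 + |E||V|^2) = \mathcal{O}(|E||V|^2\Delta^2)$, which is within budget. If condition (a) holds, the algorithm reports that $\HH$ is not dually conformal and halts.

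If condition (a) fails, every maximal clique of $G$ is a transversal of $\HH$, and I would test condition (b) in the simplified $2$-uniform form: does $G$ contain a vertex $v$ such that the closed neighborhood $N_\HH[v]$ is a clique in $G$? This is a direct check. For each vertex $v$ I would list the vertices of $N_\HH[v]$, whose size is $\deg_\HH(v)+1 = \mathcal{O}(\Delta)$, and test whether this set forms a clique in $G$ by verifying adjacency of all $\mathcal{O}(\Delta^2)$ pairs. Using the edge-vertex incidence structure of \Cref{data-structure-for-hypergraphs} to enumerate $N_\HH(v)$ in time $\mathcal{O}(\Delta)$ and the adjacency matrix of $G$ (computable in $\mathcal{O}(|V|^2)$ time) for constant-time adjacency queries, each vertex costs $\mathcal{O}(\Delta^2)$, for a total of $\mathcal{O}(|V|\Delta^2)$ over all $v$. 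This is comfortably within the $\mathcal{O}(|E||V|^2\Delta^2)$ budget.

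Combining these steps, correctness follows directly from \Cref{lem:2-uniform-dually-conformal}: $\HH$ is not dually conformal precisely when condition (a) or condition (b) holds, and the algorithm tests both. The main point is really the bookkeeping of running times rather than any conceptual obstacle; the only subtlety is to confirm that condition (b), though phrased for general cliques $C$ in \Cref{no instances characterization 2}, reduces in the $2$-uniform case to the single explicit candidate $C = N_\HH[v]$, which is exactly what \Cref{lem:2-uniform-dually-conformal} guarantees. Since the cost of computing $G$ dominates all other steps, the total running time is $\mathcal{O}(|E||V|^2\Delta^2)$, as claimed.
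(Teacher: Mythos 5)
Your proposal is correct and takes essentially the same approach as the paper's proof: compute $G(\HH^d)$ via \Cref{dual-co-occurrence-graph-in-poly-time-improved}, test condition (a) of \Cref{lem:2-uniform-dually-conformal} using \Cref{condition (a) is easy for bounded dimension} specialized to $k=2$, and test condition (b) directly by checking for each vertex $v$ whether $N_\HH[v]$ is a clique in $G$ in $\mathcal{O}(\Delta^2)$ time per vertex, for $\mathcal{O}(|V|\Delta^2)$ in total. Your running-time bookkeeping likewise matches the paper's, with the computation of the co-occurrence graph dominating at $\mathcal{O}(|E||V|^{2}\Delta^2)$.
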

\end{sloppypar}

\begin{proof}
Let $\HH$ be the input $2$-uniform hypergraph and let $G = G(\HH^d)$ be the co-occurrence graph of $\HH^d$.
By \Cref{dual-co-occurrence-graph-in-poly-time-improved}, $G$ can be computed in time $\mathcal{O}(|E||V|^{2}\Delta^2)$.
By \Cref{lem:2-uniform-dually-conformal}, $\HH$ is not dually conformal if and only if one of the conditions $(a)$ and (b) from the lemma holds.
By \Cref{condition (a) is easy for bounded dimension}, condition (a) can be tested in time $\mathcal{O}(|E||V|^{2}\Delta^2$).
Since we know both graphs $G$ and $\HH$, condition (b) can also be tested in polynomial time: for each vertex $v$ of $G$, we compute the closed neighborhood of $v$ in $\HH$ and verify if it is a clique in $G$.
For a fixed vertex $v$ of $G$, this can be done in time $\mathcal{O}(\Delta^2)$, resulting in the total time complexity of $\mathcal{O}(|V|\Delta^2)$.
\end{proof}

\begin{remark}
The time complexity of the algorithm given by \Cref{thm:2-uniform} is dominated by the time needed to compute the co-occurrence graph of $\HH^d$.
The complexity of the remaining steps is only $\mathcal{O}(|E||V|^{2}+|V|\Delta^2)$.
\end{remark}

Recall that by \Cref{cor:universal}, a minimal clique transversal in a graph $G$ has size one if and only if it consists of a universal vertex.
Therefore, \Cref{lem:clique-transversals-characterization-dimension} and its proof imply the following.

\begin{corollary}\label{corollary-dually conformal-2-uniform}
For every $2$-uniform hypergraph $\HH$, the following statements are equivalent.
\begin{enumerate}
\item $\HH$ is dually conformal.
\item There exists a graph $G$ with $\tau_c^+(G)=2$ and without universal vertices such that $\HH$ is the hypergraph of all minimal clique transversals of~$G$.
\end{enumerate}
\end{corollary}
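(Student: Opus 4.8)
The plan is to derive this corollary directly from \Cref{lem:clique-transversals-characterization-dimension} specialized to $k = 2$, together with \Cref{cor:universal}. The key observation is that a $2$-uniform hypergraph is automatically Sperner and (having at least one hyperedge) has dimension exactly $2$; hence for such an $\HH$ the property ``$\HH$ is dually conformal'' coincides verbatim with the first item of \Cref{lem:clique-transversals-characterization-dimension} for $k=2$. Applying that lemma, $\HH$ is dually conformal if and only if there is a graph $G$ with $\tau_c^+(G) \le 2$ whose hypergraph of all minimal clique transversals is $\HH$. All that then remains is to explain why, in the $2$-uniform case, the bound $\tau_c^+(G) \le 2$ can be tightened to the sharper requirement that $\tau_c^+(G) = 2$ \emph{and} $G$ has no universal vertex.

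For the implication from statement~1 to statement~2, I would start from a dually conformal $\HH$ and take the graph $G$ furnished by \Cref{lem:clique-transversals-characterization-dimension}. Since $\HH$ is $2$-uniform, every minimal clique transversal of $G$ has size exactly $2$. Therefore $\tau_c^+(G)$, being the maximum size of a minimal clique transversal, equals $2$, and $G$ has no minimal clique transversal of size $1$. By \Cref{cor:universal}, a singleton $\{v\}$ is a clique transversal of $G$ precisely when $v$ is universal, and such a singleton is automatically a \emph{minimal} clique transversal (its only proper subset, the empty set, is not a transversal); hence the absence of size-$1$ minimal clique transversals is exactly the absence of universal vertices. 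This yields statement~2.

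For the converse, given a graph $G$ with $\tau_c^+(G) = 2$ and no universal vertex such that $\HH$ is its hypergraph of all minimal clique transversals, I would simply note that $\tau_c^+(G) = 2 \le 2$ and invoke \Cref{lem:clique-transversals-characterization-dimension} in the other direction to conclude that $\HH$ is a dually conformal Sperner hypergraph, and in particular dually conformal. I do not anticipate any genuine obstacle: the corollary is a boundary-case specialization of \Cref{lem:clique-transversals-characterization-dimension}, and the only content beyond that lemma is the translation, via \Cref{cor:universal}, between ``no minimal clique transversal of size $1$'' and ``no universal vertex.'' The single point to state carefully is that $2$-uniformity of $\HH$ forces both the equality $\tau_c^+(G) = 2$ (not merely $\le 2$) and the nonexistence of size-$1$ transversals, which is what makes the equivalence tight.
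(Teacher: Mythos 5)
Your proposal is correct and follows exactly the paper's route: the paper derives this corollary from \Cref{lem:clique-transversals-characterization-dimension} with $k=2$ together with the observation, via \Cref{cor:universal}, that minimal clique transversals of size one correspond precisely to universal vertices. Your write-up merely makes explicit the details the paper leaves implicit (2-uniformity forces Sperner, $\tau_c^+(G)=2$ rather than $\le 2$, and minimality of singleton transversals), all of which are handled correctly.
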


\section{Discussion}\label{sec:discussion}

We have initiated the study of dually conformal hypergraphs, that is, hypergraphs whose dual hypergraph is conformal.
As our main result, we developed a polynomial-time algorithm for recognizing dual conformality in hypergraphs of bounded dimension.

The main problem left open by our work is of course the problem of determining the complexity of {\sc Dual Conformality}.
In particular, the following questions are open.

\begin{question}
Is {\sc Dual Conformality} {\sf co-NP}-complete?
Is it in {\sf NP}?
Is it in {\sf P}?
\end{question}

\begin{sloppypar}
One could approach these questions by studying the {\sc Dual Conformality} problem in particular classes of hypergraphs, for example on hypergraphs derived from graphs (such as matching hypergraphs~\cite{MR2852510,MR818499}, and various clique~\cite{MR2755907,MR818499,MR1689294,MR539710,MR719998}, independent set~\cite{MR2755907,MR818499}, neighborhood~\cite{MR3314932,MR3281177}, separator~\cite{MR1095371,MR1936236,MR3963220}, and dominating set hypergraphs~\cite{MR3281177,MR3963220}, etc.).
If there exists a type of hypergraphs derived from graphs and a class of graphs $\mathcal{G}$ such that for each graph $G\in \mathcal{G}$, the corresponding hypergraph can be computed in polynomial time but testing dual conformality is {\sf co-NP}-complete, this would imply {\sf co-NP}-completeness of {\sc Dual Conformality}.
In particular, given that the conformality property of Sperner hypergraphs is closely related to clique hypergraphs of graphs (cf.~\Cref{Sperner-conformal}), it would be natural to investigate the complexity of {\sc Dual Conformality} when restricted to clique hypergraphs of graphs.
This leads to the following property of graphs.
A graph $G$ is \emph{clique dually conformal (CDC)} if its clique hypergraph is dually conformal.
\end{sloppypar}

\begin{question}
What is the complexity of recognizing CDC graphs?
\end{question}

\begin{sloppypar}
As explained above, the question is particularly interesting for graph classes with polynomially many maximal cliques.
As our preliminary investigations, we were able to develop polynomial-time algorithms for testing the CDC property in the classes of split graphs and triangle-free graphs.
To keep the length of this paper manageable, we shall present these results in a separate publication.
\end{sloppypar}

Recall that our results have implications for the upper clique transversal problem in graphs.
The variant of the problem in which $k$ is part of the input is known to be {\sf NP}-hard (see~\cite{MilanicUnoWG2023}).
In terms of the parameterized complexity of the problem (with $k$ as the parameter), \Cref{poly-k-UCT} shows that the problem is in {\sf XP}.
This motivates the following.

\begin{question}
Is the \textsc{$k$-Upper Clique Transversal} problem with $k$ as parameter {\sf W[1]}-hard?
\end{question}

We conclude with some structural questions.

\begin{question}\label{questionSpernerConformal}
Is there a real number $r\ge 1$ such that every  conformal hypergraph $\HH$ satisfies $(\dim(\HH)\cdot\dim(\HH^d))^r\ge |V(\mathcal{H})|$?
\end{question}

\begin{sloppypar}
Note that we may without loss of generality restrict our attention to Sperner conformal hypergraphs.
On the other hand, the conformality assumption in \Cref{questionSpernerConformal} is essential, as shown by the following construction by Vladimir Gurvich and Kazuhisa Makino (personal communication), generalizing a graph construction due to Costa, Haeusler, Laber, and Nogueira~\cite{MR2292971}.
Consider integers $d\ge 2$, $\ell\ge 1$, and $k>d$.
Define a $d$-uniform hypergraph $\mathcal{H}=(V,E)$ as follows.
Consider a set  $W$  of  $k$  vertices.
The hypergraph $\mathcal{H}$ contains, as hyperedges, all $d$-subsets of $W$  and
$\ell{k \choose d - 1}$ other edges, obtained as follows.
To every $(d-1)$-subset of $W$ let us assign a new vertex and add the obtained $d$-set to $E$.
Moreover, let us do this $\ell$ times for each  $(d-1)$-set.
Note that $\HH$ is not conformal.
Furthermore, the number of vertices is $|V| = k + \ell {k \choose d-1}$, while $\dim(\mathcal{H})= d$ and $\dim(\mathcal{H}^d) = k + \ell - (d-1)$.
In particular, taking an integer $q\ge 2$ and setting $d = q+1$, $k = 2q$, and $\ell = 1$, we obtain
$\dim(\mathcal{H})= \dim(\mathcal{H}^d) = q+1$, while $|V| = {2q \choose q}+2q$, which is exponential in $q$.
If  $d=2$, $k > d$ is arbitrary, and $\ell = k-1$, we obtain the same example as in~\cite{MR2292971}.
\end{sloppypar}

Since the general case of \Cref{questionSpernerConformal} is equivalent to the Sperner case, \Cref{Sperner-conformal} implies that the question can be posed equivalently in graph theoretic terms.
Recall that for a graph $G$, we denote by $\omega(G)$ the maximum size of a clique in $G$ and by $\tau_c^+(G)$ the upper clique transversal number of $G$.

\begin{question}\label{question:graphs}
Is there a real number $r\ge 1$ such that every graph $G$ satisfies
\[\left(\omega(G)\cdot \tau_c^+(G)\right)^r\ge |V(G)|\,?\]
\end{question}

A strongly related question for the class of CIS graphs (that is, graphs in which every maximal independent set is a clique transversal) was posed by Alc\'on, Gutierrez, and Milani\v{c} in~\cite{DBLP:journals/entcs/AlconGM19}.
Denoting by $\alpha(G)$ the maximum size of an independent set in a graph $G$, the question is as follows.

\begin{question}[Alc\'on, Gutierrez, and Milani\v{c}~\cite{DBLP:journals/entcs/AlconGM19}]\label{question:CIS-graphs}
Is there a real number $r\ge 1$ such that every CIS graph $G$ satisfies \[(\omega(G)\cdot \alpha(G))^r\ge |V(G)|\,?\]
\end{question}

\begin{sloppypar}
Note that a positive answer to \Cref{question:CIS-graphs} would imply a positive answer to \Cref{question:graphs} for the class of CIS graphs.
For general graphs, random graphs show that the analogue of \Cref{question:CIS-graphs} does not hold (see, e.g.,~\cite{MR1864966}).
On the other hand, the famous Erd\H{o}s-Hajnal conjecture (see, e.g., the survey by Chudnovsky~\cite{MR3150572}) states that the analogue of \Cref{question:CIS-graphs} holds when restricted to any class of graphs not containing a fixed graph $H$ as an induced subgraph (with the value of $r$ depending on $H$).
In contrast, every graph is an induced subgraph of a CIS graph (see~\cite{MR3838925}).
\end{sloppypar}

\subsection*{Acknowledgements}

\begin{sloppypar}
We are grateful to Kazuhisa Makino for helpful discussions related to \Cref{questionSpernerConformal}.
Part of the work for this paper was done in the framework of bilateral projects between Slovenia and the USA and between Slovenia and the Russian federation, partially financed by the Slovenian Research and Innovation Agency (BI-US/22--24--093, BI-US/22--24--149, BI-US/$20$--$21$--$018$, and BI-RU/$19$--$21$--$029$).
The work of the third author is supported in part by the Slovenian Research and Innovation Agency (I0-0035, research program P1-0285 and research projects J1-3001, J1-3002, J1-3003, J1-4008, and J1-4084), and by the research program CogniCom (0013103) at the University of Primorska. 
The research of the second author was included in the HSE University Basic Research Program.
The work of the fourth author is partially supported by JSPS KAKENHI Grant Number JP17K00017, 20H05964 and 21K11757, Japan.
\end{sloppypar}

\end{document}